\documentclass[10pt,oneside]{amsart}   	

\usepackage{geometry}                		
\usepackage{geometry,graphicx,amssymb,amsmath,amsfonts,bm,tcolorbox,enumitem,amsbsy}
\usepackage{caption,booktabs,array,multirow,cellspace}
\usepackage[all]{xy}
\usepackage[hidelinks]{hyperref}

\usepackage{tikz}
\usetikzlibrary{arrows,shapes}
\usetikzlibrary{arrows, decorations.markings,fit}
\usetikzlibrary{calc,3d}
\usepackage{tikz-3dplot}
\usepackage{tkz-euclide}
\usepackage{epstopdf}
\usepackage{pgfplots}
\usepgfplotslibrary{colormaps,patchplots}
\usetikzlibrary{angles}
\usetikzlibrary{arrows,shapes}
\usetikzlibrary{arrows, decorations.markings,fit}
\usetikzlibrary{calc,3d}
\usepgfplotslibrary{groupplots}
\usetikzlibrary{matrix, positioning}

\tikzset{
    >=stealth',
    punkt/.style={
           rectangle,
           rounded corners,
           draw=black, very thick,
           text width=6.5em,
           minimum height=2em,
           text centered},
    pil/.style={
           ->,
           thick,
           shorten <=2pt,
           shorten >=2pt,}
}

\usepackage{subcaption}
\numberwithin{equation}{section}

\allowdisplaybreaks[3]

\newtheorem{theorem}{Theorem}[section]
\newtheorem{lemma}[theorem]{Lemma}
\newtheorem{assumption}[theorem]{Assumption}
\newtheorem{corollary}[theorem]{Corollary}
\newtheorem{proposition}[theorem]{Proposition}
\theoremstyle{definition}

\newtheorem{remark}[theorem]{Remark}

\definecolor{myred}{RGB}{195,0,0}
\definecolor{myblue}{RGB}{0,90,170}
\definecolor{mygreen}{RGB}{0,140,0}
\definecolor{myreddark}{RGB}{140,0,0}
\definecolor{myredlight}{RGB}{255,100,100}
\definecolor{mybluedark}{RGB}{0,50,120}
\definecolor{mybluelight}{RGB}{100,180,255}
\definecolor{mygreendark}{RGB}{0,90,0}
\definecolor{mygreenlight}{RGB}{100,220,100}
\definecolor{mypurple}{RGB}{120,0,120}
\definecolor{myorange}{RGB}{230,120,0}
\definecolor{mycyan}{RGB}{0,150,150}
\definecolor{myyellow}{RGB}{210,180,0}
\definecolor{mybrown}{RGB}{150,100,50}
\definecolor{mygray}{RGB}{120,120,120}

\newcommand{\R}{\mathbb{R}}

\newcommand{\bbR}{\mathbb{R}}
\newcommand{\bbN}{\mathbb{N}}
\newcommand{\bbS}{\mathbb{S}}

\newcommand{\dive}{{\ensuremath\mathop{\mathrm{div}}}}
\newcommand{\curl}{{\ensuremath\mathop{\mathrm{curl}}}}

\newcommand{\bc}{\bm c}
\newcommand{\bfd}{\bld{d}}

\newcommand{\bfe}{\bld{e}}

\newcommand{\bff}{\bld{f}}
\newcommand{\bgg}{\bld{g}}

\newcommand{\bn}{\bld{n}}
\newcommand{\bp}{\bld{p}}

\newcommand{\bu}{\bld{u}}

\newcommand{\bv}{\bld{v}}

\newcommand{\bx}{\bld{x}}
\newcommand{\by}{\bld{y}}
\newcommand{\bz}{\bld{z}}

\newcommand{\bH}{\bld{H}}

\newcommand{\bfI}{\mathbf{I}}
\newcommand{\bfK}{\mathbf{K}}
\newcommand{\bL}{\bld{L}}

\newcommand{\bfQ}{\mathbf{Q}}
\newcommand{\bfR}{\mathbf{R}}

\newcommand{\bP}{\bld{P}}

\newcommand{\bX}{\bm X}

\newcommand{\balpha}{\bm \alpha}

\newcommand{\bchi}{\bld{\chi}}

\newcommand{\bfeta}{\bm \eta}

\newcommand{\bxi}{\bld{\xi}}

\newcommand{\calA}{\mathcal{A}}
\newcommand{\calB}{\mathcal{B}}

\newcommand{\calD}{\mathcal{D}}

\newcommand{\calE}{\mathcal{E}}

\newcommand{\calF}{\mathcal{F}}

\newcommand{\calI}{\mathcal{I}}

\newcommand{\calN}{\mathcal{N}}

\newcommand{\calP}{\mathcal{P}}

\newcommand{\calQ}{\mathcal{Q}}
\newcommand{\calW}{\mathcal{W}}
\newcommand{\calS}{\mathcal{S}}

\newcommand{\calV}{\mathcal{V}}

\newcommand{\dist}{\operatorname{dist}}
\newcommand{\bld}[1]{\boldsymbol{#1}}

\title[Divergence-free interpolation]{Divergence-free interpolation of tangential vector fields via matrix-valued kernels}\thanks{The first author was supported in part by National Natural Science Foundation of China (No. 12571407), Basic Research Program of Jiangsu (No. BK20252037), and a Jiangsu Shuangchuang Team program (No. JSSCTD202449).}

\author[Z. Sun]{Zhengjie Sun}
\address{School of Mathematics and Statistics, Nanjing University of Science and Technology}
\email{zhengjiesun2020@gmail.com}

\author[L. Dong]{Lishuo Dong}
\address{School of Mathematics and Statistics, Nanjing University of Science and Technology}
\email{dlishuo@163.com}

\author[B. Huang]{Biao Huang}
\address{School of Mathematics and Statistics, Nanjing University of Science and Technology}
\email{biaohuangbest@163.com}

\keywords{Vector fields; divergence-free; multiplier; zonal kernels; superconvergence}
\subjclass{41A05, 41A25, 43A90, 65D12.}

\begin{document}

\maketitle

\begin{abstract}
We develop and analyze a family of divergence-free kernel interpolation methods for tangential vector fields on the unit sphere. Starting from scalar radial kernels in Euclidean space, we construct tangent-valued, surface divergence-free matrix kernels without repeatedly applying surface differential operators. The construction separates the geometric enforcement of the divergence-free constraint from the choice of scalar generator and admits lower-order variants with reduced regularity requirements compared with classical potential-based methods. Using vector spherical harmonics, we derive explicit kernel representations and characterize their Fourier multipliers. We also introduce an inverse Laplace–Beltrami construction that preserves the multipliers of the underlying scalar zonal kernel. For interpolation at scattered nodes, we establish a lower bound for the smallest eigenvalue of the interpolation matrix and derive pointwise and Sobolev error estimates, including superconvergence for targets smoother than the native space. Numerical experiments corroborate the theoretical convergence and stability results, while additional examples on nonspherical surfaces illustrate the applicability of the kernel formula beyond the sphere.

\end{abstract}

\section{Introduction}
\label{sec:intro}
\thispagestyle{empty}

Tangential vector fields on surfaces arise in fluid flow on curved interfaces, geophysical modeling, and transport on manifolds \cite{dubois_1990SINUM_discrete,lederer_2020_divergence,stone_2010JFM_interfaces,swarztrauber_1981SINUM_approximation,zhang_2006ToG_vector}.  In many applications, these fields satisfy intrinsic differential constraints,
most notably vanishing surface divergence or surface curl \cite{freedman_1991AM_divergence,hermes_1991SIREV_nilpotent,wu_2021NM_provably}. Approximation methods that preserve both tangency and the relevant differential
constraint are therefore important for the analysis and simulation of such
fields.

In Euclidean domains, divergence-free and curl-free matrix-valued radial basis functions (RBFs) are well-established tools for approximating constrained vector fields \cite{Fuselier_2008Adv_improved,Fuselier_2008MCoM_sobolev,Lowitzsch_2005JAT_error,Lowitzsch_2005AdvCM_matrix,Narcowich_1994MCoM_generalized}. Applications include incompressible fluid flow and solenoidal magnetic fields \cite{Guzman_2014IMAJNA_conforming,Guzman_2014MCoM_conforming,Neilan_2021SINUM_divergence,schwarzacher_2025NM_stability,Soga_2024NM_mathematical,Zhao_2019SINUM_divergence}. 
A standard potential-based construction applies differential operators to a
sufficiently smooth scalar positive-definite radial kernel \cite{Dodu_2004NM_irrotational,Farrell_2017IMAJNA_multilevel,Lowitzsch_2005AdvCM_matrix,Narcowich_1994MCoM_generalized,Wendland_2009SINUM_divergence}. Specifically, let $\varphi:[0,\infty)\to\mathbb{R}$ be a univariate function. The associated divergence-free and curl-free matrix-valued kernels are given by
\begin{subequations}\label{eq:Euclid_potential}
    \begin{align}
        &\Phi_{\dive}(\bx,\by):=(-\Delta \bfI +\nabla\nabla^{\top})\varphi(\|\bx-\by\|), \label{eq:Euclid_div_op}\\
        &\Phi_{\curl}(\bx,\by)
    :=-\nabla\nabla^{\top}\varphi(\|\bx-\by\|), \label{eq:Euclid_curl_op}
    \end{align}
\end{subequations}
where $\bx,\by\in\bbR^3$ and $\bfI$ is the identity matrix. These kernels generate divergence-free
and curl-free interpolants in $\R^3$, respectively.

Writing $r=\|\bx-\by\|$, the curl-free kernel has the explicit representation
\begin{equation}\label{eq:psicurl_derivative_form}
    \Phi_{\curl}(\bx,\by)
    =
    -\frac{\varphi'(r)}{r}\bfI
    -
    \left(
        \frac{\varphi''(r)}{r^2}
        -
        \frac{\varphi'(r)}{r^3}
    \right)
    (\bx-\by)(\bx-\by)^\top .
\end{equation}
Motivated by this representation, Sun et al.~\cite{Sun_2026_error} considered
the more general isotropic form
\begin{equation}\label{eq:curlkernel_form}
    \Phi_{\curl}(\bx,\by)
    =
    \alpha(r)\bfI
    -
    \beta(r)(\bx-\by)(\bx-\by)^\top.
\end{equation}
The columns of \eqref{eq:curlkernel_form} are curl-free when the scalar
coefficients satisfy $\calD\alpha=\beta$. A useful family is obtained by setting
\[
\alpha(r)=\calI\beta(r), \quad \beta(r)=\calD^m\varphi(r),\quad m\in \bbN_0,
\]
where $\calD$ and $\calI$ are classical differential and integral operators \cite{Schaback_1996JCAM_operators,Wendland_2004book_scattered},
\[
(\calD f)(r)=-\frac{1}{r}f'(r),
\qquad
(\calI f)(r)=\int_r^\infty s f(s)\,\mathrm{d}s.
\]

Extending these constructions to surfaces is not immediate. Restricting an
ambient kernel to a surface $\calS\subset\mathbb{R}^3$ generally preserves
neither tangency nor the corresponding intrinsic differential constraint. In
particular, the restriction of an ambient divergence-free field need not be
surface divergence-free. Existing operator-based constructions overcome this
difficulty by combining an ambient curl-free kernel with surface-curl or
tangent-projection operators
\cite{Narcowich_2007JFAA_divergence,Fuselier_2009MCoM_error}.

Let $\calS$ be a smooth oriented surface with unit normal field $\bn$, and let
$\nabla_*$ and $\bL_*$ denote the surface gradient and surface curl,
respectively. The identity $\bL_*=\bn\times\nabla_*$ shows that the surface curl
maps scalar potentials to tangential vector fields. To express this operation
algebraically, let $\bX_{\bu}$ be the skew-symmetric matrix satisfying
$\bX_{\bu}\bv=\bu\times\bv$ for every $\bv\in\mathbb{R}^3$. For
$\bu=[u_1,u_2,u_3]^\top$,
\begin{equation}\label{eq:Xu_intro}
    \bX_{\bu}
    =
    \begin{bmatrix}
        0 & -u_3 & u_2 \\
        u_3 & 0 & -u_1 \\
        -u_2 & u_1 & 0
    \end{bmatrix}.
\end{equation}
Let $\bP_{\bn}:=\bfI-\bn\bn^\top$ denote the orthogonal projection onto the
tangent plane. The corresponding surface kernels are
\begin{subequations}\label{eq:surf_potential_intro}
\begin{align}
    \bfK_{\dive}(\bx,\by)
    &:=
    \bX_{\bn_{\bx}}
    \Phi_{\curl}(\bx,\by)
    \bX_{\bn_{\by}}^\top, \label{eq:surf_div_op_intro}\\
    \bfK_{\curl}(\bx,\by)
    &:=
    \bP_{\bn_{\bx}}
    \Phi_{\curl}(\bx,\by)
    \bP_{\bn_{\by}}^\top. \label{eq:surf_curl_op_intro}
\end{align}
\end{subequations}
If the ambient kernel has curl-free columns, each translate of
$\bfK_{\dive}$ is tangential and surface divergence-free, whereas each
translate of $\bfK_{\curl}$ is tangential and surface curl-free
\cite[Theorem~1]{Narcowich_2007JFAA_divergence}. If the ambient matrix-valued
kernel is positive definite, these transformations inherit positive
definiteness on tangent data under the corresponding nondegeneracy conditions.

The purpose of this paper is to extend the Euclidean isotropic representation of matrix-valued RBF kernels to tangential vector fields on embedded surfaces. Rather than beginning with repeated derivatives of a scalar potential in \eqref{eq:Euclid_potential}, we start from the isotropic matrix form \eqref{eq:curlkernel_form} and incorporate the surface geometry through the cross-product and tangent-projection operators $\bX_{\bn}$ and $\bP_{\bn}$. This separates geometric constraint enforcement from the choice of scalar coefficient functions $\alpha$ and $\beta$ and yields a unified construction of surface divergence-free and curl-free kernels.

On the unit sphere, we derive vector spherical harmonic expansions of the
divergence-free kernels and characterize their Fourier multipliers. Positive
multipliers yield positive definiteness on tangent data, while appropriate
two-sided decay bounds identify the associated native spaces, up to norm
equivalence, with Sobolev spaces of divergence-free tangential fields. Under
appropriate assumptions on the scalar multipliers, the $m=1$ construction
preserves the scalar Sobolev order, whereas the classical $m=2$
potential-based construction incurs a loss of one order. We also introduce an
inverse Laplace--Beltrami construction for which the vector multipliers agree
with those of the underlying scalar zonal kernel.
For scattered data interpolation, we formulate the linear system in local
orthonormal tangent frames and use Fourier localization to establish a lower
bound for the smallest eigenvalue of the interpolation matrix. We then derive
pointwise and Sobolev error estimates, including a Hilbert-scale
superconvergence result for targets smoother than functions in the native
space. Numerical experiments on $\bbS^2$ examine the theoretical convergence and
stability behavior. Additional experiments on a torus, a red blood cell
surface, and a bumpy sphere serve as illustrations beyond the scope
of the spherical theory.

The remainder of the paper is organized as follows. Section~2 reviews scalar
and vector spherical harmonics, matrix-valued kernels, and native spaces.
Section~3 develops the surface construction, specializes it to $\bbS^2$, and
analyzes the resulting Fourier multipliers. Sections~4 and~5 formulate and
analyze divergence-free interpolation on $\bbS^2$, including stability and
error estimates. Section~6 presents the numerical experiments, and Section~7
concludes the paper.

\section{Preliminaries}
\label{sec:prelim}

\subsection{Spherical harmonics}
Let $\Delta_*$ denote the Laplace--Beltrami operator on the unit sphere $\bbS^2$.
For each $\ell\ge0$ and $1\le k\le 2\ell+1$, let $Y_{\ell,k}$ be a spherical harmonic of degree $\ell$, that is, an eigenfunction of $-\Delta_*$ with eigenvalue
\[
\lambda_\ell=\ell(\ell+1).
\]
The family $\{Y_{\ell,k}\}$ is an orthonormal basis of $L_2(\bbS^2)$; see, for example, \cite{Dai_2013book_approximation,Freeden_2008book_spherical,Mueller_1966book_spherical}.
Accordingly, every $f\in L_2(\bbS^2)$ has the Fourier expansion
\begin{equation*}
    f
    =
    \sum_{\ell=0}^{\infty}\sum_{k=1}^{2\ell+1}
    \widehat f_{\ell,k}Y_{\ell,k},
    \quad
    \widehat f_{\ell,k}:=\langle f,Y_{\ell,k}\rangle.
\end{equation*}
For $\sigma\ge0$, we define the Sobolev space $H^\sigma(\bbS^2)$ by the norm
\begin{equation}\label{eq:sobolev_norm}
    \|f\|_{H^\sigma(\bbS^2)}^2
    :=
    \sum_{\ell=0}^{\infty}\sum_{k=1}^{2\ell+1}
    (1+\lambda_\ell)^\sigma
    |\widehat f_{\ell,k}|^2.
\end{equation}

We next recall the corresponding $\bL_2$-theory for tangential vector fields on $\bbS^2$; see \cite{Freeden_2008book_spherical,Fuselier_2009MCoM_error}.
A vector field $\bff:\bbS^2\to\bbR^3$ is said to be tangential if
\[
\bx^\top \bff(\bx)=0,
\quad \bx\in\bbS^2.
\]
Throughout, we identify each point $\bx\in\bbS^2$ with the outward unit normal at $\bx$.
We denote the space of square-integrable tangential vector fields by $\bL_2(\bbS^2)$ and equip it with the inner product
\begin{equation*}
    \langle \bff,\bgg\rangle
    :=
    \int_{\bbS^2}
    \bff(\bx)^\top \bgg(\bx)\,d\mu(\bx).
\end{equation*}

For every scalar function $g$ on $\bbS^2$, both $\nabla_* g$ and $\bL_* g$ are tangential;
moreover, $\nabla_* g$ is curl-free, whereas $\bL_* g$ is divergence-free.
For $\ell\ge1$ and $1\le k\le2\ell+1$, define the vector spherical harmonics by
\begin{equation}\label{eq:def_ylk}
    \mathbf y_{\ell,k}
    :=
    \frac{\bL_*Y_{\ell,k}}{\sqrt{\ell(\ell+1)}},
    \quad
    \mathbf z_{\ell,k}
    :=
    \frac{\nabla_*Y_{\ell,k}}{\sqrt{\ell(\ell+1)}}.
\end{equation}
With the normalization in \eqref{eq:def_ylk}, the family $\{\mathbf y_{\ell,k}\}$ forms an orthonormal basis of the divergence-free subspace of $\bL_2(\bbS^2)$, whereas $\{\mathbf z_{\ell,k}\}$ is an orthonormal basis of the curl-free subspace.
Because $\bbS^2$ has no nontrivial harmonic tangential vector fields, the union of these two families is a
complete orthonormal basis of $\bL_2(\bbS^2)$.
Thus, every $\bff\in\bL_2(\bbS^2)$ has the orthogonal expansion
\begin{equation*}
    \bff
    =
    \sum_{\ell=1}^{\infty}\sum_{k=1}^{2\ell+1}
    \left(
    \widehat{\bff}_{\ell,k}^{\rm div}\mathbf y_{\ell,k}
    +
    \widehat{\bff}_{\ell,k}^{\rm curl}\mathbf z_{\ell,k}
    \right),
\end{equation*}
where
\[
\widehat{\bff}_{\ell,k}^{\rm div}
:=
\langle \bff,\mathbf y_{\ell,k}\rangle,
\quad
\widehat{\bff}_{\ell,k}^{\rm curl}
:=
\langle \bff,\mathbf z_{\ell,k}\rangle.
\]
For $\sigma\ge0$, define the Sobolev space $\bH^\sigma(\bbS^2)$ of tangential vector fields by the norm
\begin{equation*}
    \|\bff\|_{\bH^\sigma(\bbS^2)}^2
    :=
    \sum_{\ell=1}^{\infty}\sum_{k=1}^{2\ell+1}
    (1+\lambda_\ell)^\sigma
    \left(
    |\widehat{\bff}_{\ell,k}^{\rm div}|^2
    +
    |\widehat{\bff}_{\ell,k}^{\rm curl}|^2
    \right).
\end{equation*}
We write $\bH_{\rm div}^\sigma(\bbS^2)$ for the closed subspace of  divergence-free tangential vector fields in $\bH^\sigma(\bbS^2)$ and $\bH_{\rm curl}^\sigma(\bbS^2)$ for the corresponding curl-free subspace.

The scalar spherical harmonics satisfy the classical addition formula
\cite{Mueller_1966book_spherical},
\begin{equation}\label{eq:addition_formula}
    \sum_{k=1}^{2\ell+1}
    Y_{\ell,k}(\bx)Y_{\ell,k}(\by)
    =
    \frac{2\ell+1}{4\pi}P_\ell(\bx\cdot \by),
    \quad \bx,\by\in\bbS^2,
\end{equation}
where $P_\ell$ denotes the Legendre polynomial of degree $\ell$, normalized by $P_\ell(1)=1$.

The corresponding addition formulas for vector spherical harmonics were established in \cite{Sun_2026_vector}. We record them here because they are central to the Fourier analysis and error estimates
below.

\begin{lemma}\label{lem:vec_addition}
	Let $\{\mathbf{y}_{\ell,k}\}$ and $\{\mathbf{z}_{\ell,k}\}$ be the orthonormal bases of divergence-free and curl-free vector spherical harmonics of degree $\ell \ge 1$, respectively, defined in \eqref{eq:def_ylk}.  For $\bx,\by\in\bbS^2$, define the matrix-valued kernels
	\begin{equation*}
		\mathcal{S}_\ell(\bx,\by) := \sum_{k=1}^{2\ell+1} \mathbf{y}_{\ell,k}(\bx) \mathbf{y}_{\ell,k}(\by)^{\top}, \quad
		\mathcal{T}_\ell(\bx,\by) := \sum_{k=1}^{2\ell+1} \mathbf{z}_{\ell,k}(\bx) \mathbf{z}_{\ell,k}(\by)^{\top}.
	\end{equation*}
	 Then the following addition formulas hold:
	\begin{align*}
		\mathcal{S}_\ell(\bx,\by) &= \frac{2\ell+1}{4\pi\lambda_\ell} \left[ P_\ell''(t) \mathbf{Q}(\bx,\by) + P_\ell'(t) \mathbf{R}(\bx,\by) \right], \\
		\mathcal{T}_\ell(\bx,\by) &= \frac{2\ell+1}{4\pi\lambda_\ell} \left[ P_\ell''(t) \mathbf{V}(\bx,\by) + P_\ell'(t) \mathbf{W}(\bx,\by) \right],
	\end{align*}
	where $t := \bx \cdot \by$, and the matrix-valued functions $\mathbf{Q, R, V, W}$ are defined by
	\begin{align*}
		\mathbf{Q}(\bx,\by) &:= -(\bx \times \by)(\bx \times \by)^{\top}, & 
		\mathbf{R}(\bx,\by) &:= t\mathbf{I} - \by \bx^{\top}, \\
		\mathbf{V}(\bx,\by) &:= (\by - t \bx)(\bx - t \by)^\top, &
		\mathbf{W}(\bx,\by) &:= (\mathbf{I} - \bx \bx^\top)(\mathbf{I} - \by \by^\top).
	\end{align*}
\end{lemma}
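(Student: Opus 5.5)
The plan is to obtain both formulas by applying surface differential operators in each variable to the scalar addition formula \eqref{eq:addition_formula}. By \eqref{eq:def_ylk},
\[
\mathcal{T}_\ell(\bx,\by)=\frac{1}{\lambda_\ell}\sum_{k}\nabla_{*,\bx}Y_{\ell,k}(\bx)\,\big(\nabla_{*,\by}Y_{\ell,k}(\by)\big)^\top
=\frac{1}{\lambda_\ell}\,\nabla_{*,\bx}\big(\nabla_{*,\by}\big)^\top\sum_k Y_{\ell,k}(\bx)Y_{\ell,k}(\by).
\]
Here the differential operators act linearly and the sum is finite, so they may be interchanged with the sum. Substituting the addition formula reduces the problem to computing $\nabla_{*,\bx}\nabla_{*,\by}^\top P_\ell(\bx\cdot\by)$. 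For $\mathcal{S}_\ell$ the same reduction holds with $\bL_*=\bx\times\nabla_*$, that is, $\bL_{*,\bx}=\bX_{\bx}\nabla_{*,\bx}$. This gives
\[
\mathcal{S}_\ell=\frac{1}{\lambda_\ell}\,\bX_{\bx}\Big[\nabla_{*,\bx}\nabla_{*,\by}^\top \sum_k Y_{\ell,k}(\bx)Y_{\ell,k}(\by)\Big]\bX_{\by}^\top,
\]
so the curl-free result, conjugated by the cross-product matrices, yields the divergence-free one.

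The key computation uses $\nabla_* = \bP_{\bx}\nabla$, where $\nabla$ is the gradient of a homogeneous-degree-zero extension. One could instead use the standard fact that for $g(\bx)=F(\bx\cdot\by)$ one has $\nabla_{*,\bx}g=F'(t)(\by-t\bx)$, which is easily checked by projecting $\nabla(\bx\cdot\by)=\by$ onto the tangent plane. Next, $\nabla_{*,\by}$ is applied to each component of $F'(t)(\by-t\bx)$, viewed as a function of $\by$. This produces $F''(t)(\by-t\bx)(\bx-t\by)^\top$ together with $F'(t)$ times the transposed tangential Jacobian of $\by\mapsto \by - t\bx$, which is $\big(\bP_{\by}(\bfI-\by\,\cdot\,)\ldots\big)$. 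Carefully, the Jacobian of $\by\mapsto\by-(\bx\cdot\by)\bx$ is $\bfI-\bx\bx^\top$, and composing with the tangent projection in $\by$ gives $(\bfI-\bx\bx^\top)(\bfI-\by\by^\top)=\mathbf W$. Taking $F=\frac{2\ell+1}{4\pi}P_\ell$ then gives the formula for $\mathcal{T}_\ell$ with $\mathbf V$ and $\mathbf W$.

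For $\mathcal{S}_\ell$, I will compute $\bX_{\bx}\mathbf V\bX_{\by}^\top$ and $\bX_{\bx}\mathbf W\bX_{\by}^\top$ directly. Since $\bX_{\bx}(\by-t\bx)=\bx\times\by$ and $\bX_{\by}(\bx-t\by)=\by\times\bx$, the first equals $(\bx\times\by)(\by\times\bx)^\top=\mathbf Q$. For the second, $\bX_{\bx}\bP_{\bx}=\bX_{\bx}$, so it equals $\bX_{\bx}\bX_{\by}^\top=-\bX_{\bx}\bX_{\by}$. The identity $\bX_{\bu}\bX_{\bv}=\bv\bu^\top-(\bu\cdot\bv)\bfI$ then gives $t\bfI-\by\bx^\top=\mathbf R$. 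This completes the plan.

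The main obstacle is bookkeeping: getting the correct orientation of transposes in the mixed Hessian, and justifying that differentiating the zero-homogeneous extension in $\by$ commutes with the $\bx$-derivative. I would settle both by checking the result on a concrete case, such as $\ell=1$ with $P_1'=1$ and $P_1''=0$, where $\mathcal T_1=\frac{3}{4\pi\cdot2}\mathbf W$ can be verified directly from $\nabla_* x_i = \bP_{\bx}\bfe_i$.
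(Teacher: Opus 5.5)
Your proposal is correct. Note that the paper gives no proof of this lemma: it quotes the result from \cite{Sun_2026_vector}. Your argument is therefore a self-contained derivation, not a variant of a proof in the text.

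Your route is the natural one. You differentiate the scalar addition formula \eqref{eq:addition_formula} in both variables to obtain $\mathcal T_\ell$, and then get $\mathcal S_\ell=\bX_{\bx}\mathcal T_\ell\bX_{\by}^\top$ by conjugation. This is the spherical-harmonic analogue of the paper's kernel construction $\bfK_{\dive}=\bX_{\bn_{\bx}}\Phi_{\curl}\bX_{\bn_{\by}}^\top$. It also produces $\mathbf R$ from the same identity $\bX_{\bu}\bX_{\bv}^\top=(\bu\cdot\bv)\bfI-\bv\bu^\top$ that the paper uses to derive \eqref{eq:generalkernel_form}.

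The computations check out:
\begin{itemize}
\item $\nabla_{*,\bx}F(\bx\cdot\by)=F'(t)(\by-t\bx)$.
\item The $(i,j)$ entry of $\nabla_{*,\bx}\nabla_{*,\by}^\top G$ is the $\by$-surface derivative in direction $j$ of the $i$-th component. This gives $F''(t)\mathbf V+F'(t)(\bfI-\bx\bx^\top)(\bfI-\by\by^\top)$.
\item The conjugations give exactly $\mathbf Q$ and $\mathbf R$, with the constant $\frac{2\ell+1}{4\pi\lambda_\ell}$.
\item Your $\ell=1$ check is consistent.
\item All sums are finite, so the identities hold for every $\bx,\by$, including $t=\pm1$.
\end{itemize}

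A few small clean-ups:
\begin{itemize}
\item Delete the garbled half-sentence about the ``transposed tangential Jacobian.'' With the standard convention, the matrix you need is the Jacobian itself, composed with the tangent projection on the right. Orientation genuinely matters here: $\mathbf W-\mathbf W^\top=t(\bx\by^\top-\by\bx^\top)\neq 0$ in general. Your subsequent careful computation gets it right.
\item The obstacle you flag about commuting derivatives is not real. The surface gradient equals the tangent projection of the ambient gradient of \emph{any} $C^1$ extension. So you can extend in $\bx$ and $\by$ independently to $\bbR^3\times\bbR^3$, where mixed partials commute trivially. No zero-homogeneous extension is needed.
\item In the $\mathbf W$ conjugation, state explicitly that you also use $(\bfI-\by\by^\top)\bX_{\by}^\top=\bX_{\by}^\top$, which is the transpose of $\bX_{\by}(\bfI-\by\by^\top)=\bX_{\by}$.
\end{itemize}
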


\subsection{Kernels and native spaces}

We next introduce the scalar and matrix-valued kernels used throughout the paper. These kernels determine both the approximation spaces and the associated native-space norms.
On the sphere, the natural counterpart of a Euclidean radial kernel is a zonal kernel, whose value depends only on the inner product of its arguments.

A kernel
$\Psi:\bbS^2\times\bbS^2\to\bbR$
is called \emph{zonal} if there exists a function
$\psi:[-1,1]\to\bbR$ such that
\[
\Psi(\bx,\by)=\psi(\bx\cdot\by),
\quad \bx,\by\in\bbS^2.
\]
When no ambiguity can arise, we identify $\Psi$ with its scalar kernel $\psi$.
The Fourier--Legendre expansion of $\psi$ is
\begin{equation}\label{eq:Four-LegendreSeries}
    \psi(\bx\cdot\by)
    =
    \sum_{\ell=0}^{\infty}
    \frac{2\ell+1}{4\pi}
    \widehat\psi(\ell)
    P_\ell(\bx\cdot\by),
    \quad
    \widehat\psi(\ell)
    :=
    2\pi\int_{-1}^{1}\psi(t)P_\ell(t)\,\mathrm{d}t.
\end{equation}
By the addition formula \eqref{eq:addition_formula}, this expansion can be rewritten equivalently as
\begin{equation}\label{eq:zonal_kernel_SH}
    \psi(\bx\cdot\by)
    =
    \sum_{\ell=0}^{\infty}
    \widehat\psi(\ell)
    \sum_{k=1}^{2\ell+1}
    Y_{\ell,k}(\bx)Y_{\ell,k}(\by).
\end{equation}
These identities hold in the appropriate $L_2$-sense, and pointwise whenever the smoothness of $\psi$ permits.
If $\widehat\psi(\ell)>0$ for all $\ell\ge0$, then $\psi$ defines a positive definite kernel on $\bbS^2$.
Its native space is the reproducing kernel Hilbert space
\begin{equation*}
    \mathcal N_\psi
    :=
    \left\{
    f\in L_2(\bbS^2):
    \|f\|_{\mathcal N_\psi}<\infty
    \right\}\quad \text{with}\quad \|f\|_{\mathcal N_\psi}^2
    =
    \sum_{\ell=0}^{\infty}\sum_{k=1}^{2\ell+1}
    \frac{|\widehat f_{\ell,k}|^2}{\widehat\psi(\ell)}.
\end{equation*}

We next consider matrix-valued kernels.
Let $\bfK:\bbS^2\times\bbS^2\to\bbR^{3\times3}$ be symmetric.
We say that $\bfK$ is \emph{positive semidefinite} if, for all finite set
$X=\{\bx_j\}_{j=1}^N\subset\bbS^2$ and all choice of vectors
$\balpha_j\in T_{\bx_j}\bbS^2$,
\[
\sum_{j,k=1}^N
\balpha_j^\top \bfK(\bx_j,\bx_k)\balpha_k
\ge 0.
\]
It is \emph{positive definite} if equality can hold only when $\balpha_j=\mathbf 0$ for $j=1,\dots,N$.

We focus on matrix-valued kernels associated with tangential,
divergence-free fields.  Suppose that a divergence-free kernel $\bfK_{\dive}$ has the Fourier expansion
\begin{equation}\label{eq:Fourier_expan_Matrix}
    \bfK_{\dive}(\bx,\by)
    =
    \sum_{\ell=1}^{\infty}
    \kappa_\ell
    \sum_{k=1}^{2\ell+1}
    \mathbf y_{\ell,k}(\bx)\mathbf y_{\ell,k}(\by)^\top,
\end{equation}
with coefficients $\kappa_\ell>0$.
Then the associated native space can be characterized as
\begin{equation*}
    \mathcal N_{\bfK_{\dive}}
    =
    \left\{
    \bff\in \bH_{\rm div}^{0}(\bbS^2):
    \sum_{\ell=1}^{\infty}\sum_{k=1}^{2\ell+1}
    \frac{|\widehat{\bff}_{\ell,k}^{\rm div}|^2}{\kappa_\ell}
    <\infty
    \right\}.
\end{equation*}
Analogous descriptions hold for the native spaces generated by a curl-free kernel $\bfK_{\curl}$, and by the full kernel
$\bfK=\bfK_{\dive}+\bfK_{\curl}$. 
The corresponding native-space formulas have formal
curl-free and combined-field analogues. The spectral and approximation results proved in this paper, however, concern only the divergence-free space.

Moreover, if there exist positive constants $c_1$ and $c_2$, independent of $\ell$, such that
\begin{equation}\label{eq:decay_condition_kernel}
    c_1(1+\lambda_\ell)^{-\sigma}\le \kappa_\ell\le c_2(1+\lambda_\ell)^{-\sigma},
\quad \ell\ge 1.
\end{equation}
then $\mathcal N_{\bfK_{\dive}}$ is norm-equivalent to $\bH_{\rm div}^\sigma(\bbS^2)$. Here and throughout, the notation $\kappa_\ell\asymp (1+\lambda_\ell)^{-\sigma}$
denotes this two-sided bound.

\section{General surface divergence-free kernels}

The main objective of this section is to develop a general framework for constructing surface divergence-free kernels. We begin with a general curl-free RBF construction in $\bbR^3$, and then apply the surface divergence-free operator to obtain matrix-valued kernels on embedded surfaces. We next specialize the resulting formula to the unit sphere, where the geometry allows a particularly simple representation. Finally, we analyze the associated spherical Fourier multipliers and use them to characterize positive definiteness and the corresponding native spaces.

\subsection{Construction of divergence-free kernels}

Assume that the coefficient functions in \eqref{eq:curlkernel_form} are chosen
so that the columns of the ambient kernel $\Phi_{\curl}$ are curl-free.
Substituting \eqref{eq:curlkernel_form} into
\eqref{eq:surf_div_op_intro} and using
\begin{equation*}
    \bX_{\bu}\bX_{\bv}^\top
    =
    (\bu\cdot\bv)\bfI-\bv\bu^\top,
    \quad
    \bX_{\bn_{\bx}}(\bx-\by)
    =
    \bn_{\bx}\times(\bx-\by),
\end{equation*}
we arrive at the explicit surface divergence-free kernel
\begin{equation}\label{eq:generalkernel_form}
    \bfK_\dive(\bx,\by)
    =
    \alpha(r)
    \bigl((\bn_{\bx}\cdot\bn_{\by})\bfI
    -
    \bn_{\by}\bn_{\bx}^\top \bigr)
    -
    \beta(r)
    \bigl(\bn_{\bx}\times(\bx-\by)\bigr)
    \bigl(\bn_{\by}\times(\bx-\by)\bigr)^\top,
\end{equation}
where $r=\|\bx-\by\|_2$. This construction applies to any smooth oriented embedded
surface.

For the unit sphere $\calS=\bbS^2$, the outward normal is
$\bn_{\bx}=\bx$. In this case,
\begin{equation*}
    \bigl(\bn_{\bx}\times(\bx-\by)\bigr)
    \bigl(\bn_{\by}\times(\bx-\by)\bigr)^\top
    =
    (\bx\times\by)(\bx\times\by)^\top.
\end{equation*}
Using the matrices $\bfQ$ and $\bfR$ from
Lemma~\ref{lem:vec_addition}, we obtain
\begin{equation}\label{eq:Sph_kernel}
    \bfK_\dive(\bx,\by)
    =
    \alpha(r)\,\bfR(\bx,\by)
    +
    \beta(r)\,\bfQ(\bx,\by),
\end{equation}
where the scalar radial functions are given by
\[
\beta(r)=\calD^m\varphi(r),
\quad
\alpha(r)=\calI\beta(r).
\]

On the unit sphere, we set $t:=\bx\cdot\by$,
$r^2=2-2t$ and let
$\phi(t):=\varphi\bigl(\sqrt{2-2t}\bigr)$.
Derivatives of $\varphi$ are taken with respect to $r$, whereas derivatives of
$\phi$ are taken with respect to $t$. Since
$\mathrm{d}t/\mathrm{d}r=-r$, the chain rule gives
\[
\calD\varphi(r)
=
-\frac{1}{r}\frac{\mathrm{d}}{\mathrm{d}r}\varphi(r)
=
\phi'(t).
\]
This identity is understood for $r>0$ and extended to $r=0$ by continuity
whenever the required radial derivatives exist.
Consequently, for every integer $m\ge1$,
\[
\beta(r)=\phi^{(m)}(t),
\quad
\alpha(r)=\phi^{(m-1)}(t),
\]
and \eqref{eq:Sph_kernel} becomes
\begin{equation}\label{eq:Matrix_kernel_kappa}
	\bfK_\dive(\bx,\by)
	=
	\phi^{(m-1)}(t)\,\bfR(\bx,\by)
	+
	\phi^{(m)}(t)\,\bfQ(\bx,\by).
\end{equation}
A central feature of this construction is the integer parameter $m$, which determines the order of differentiation applied to the underlying scalar zonal kernel. For $m=2$, \eqref{eq:Matrix_kernel_kappa} recovers the standard spherical surface divergence-free kernel; see, for example, \eqref{eq:psicurl_derivative_form} and \cite{Fuselier_2009MCoM_error,Fuselier_2009SINUM_stability}. The present framework, however, also admits the lower-order cases $m=1$ and $m=0$. For $m=0$, we set $\beta=\varphi(r)$ and $\alpha=\calI\beta(r)$. As we show below, these cases still produce positive definite divergence-free kernels while requiring fewer derivatives of the original scalar zonal kernel.

\subsection{Fourier characterization of divergence-free kernels}
\label{sec:FL-matrix-valued}
For an isotropic divergence-free kernel on $\bbS^2$, positivity is determined
by its vector spherical harmonic multipliers, while their asymptotic decay
determines the associated native space. The following theorem computes these
multipliers from the zonal generator.

\begin{theorem}
\label{thm:divergence-free-fourier-expansion}
Let $\phi\in C^m[-1,1]$, where $m\geq 1$ is an integer, and define
\begin{equation}
\label{eq:Legendre_expan_varphi}
    a_{\ell}^{(m-1)}
    =
    2\pi\int_{-1}^1 \phi^{(m-1)}(t)P_\ell(t)\,\mathrm{d}t .
\end{equation}
Suppose that $\sum_{\ell=1}^\infty \ell^2
\big|a_{\ell-1}^{(m-1)}-a_{\ell+1}^{(m-1)}\big|<\infty$.
Let $\bfK_\dive$ be the divergence-free matrix-valued kernel defined by
\eqref{eq:Matrix_kernel_kappa}. Then $\bfK_\dive$ admits the vector
spherical harmonic expansion \eqref{eq:Fourier_expan_Matrix}, with multipliers
\begin{equation}
\label{eq:Coeff_Kdiv_general}
 \kappa_\ell=\frac{\lambda_\ell}{2\ell+1}
    \Big(a_{\ell-1}^{(m-1)}-a_{\ell+1}^{(m-1)}\Big),\quad \ell\geq 1.
\end{equation}
If $\kappa_\ell>0$ for $\ell\ge1$, then $\bfK_\dive$ is
positive definite on tangent data at pairwise distinct nodes. 
\end{theorem}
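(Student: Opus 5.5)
Write $g:=\phi^{(m-1)}\in C^1[-1,1]$, so that \eqref{eq:Matrix_kernel_kappa} reads
\[
\bfK_\dive(\bx,\by)=g(t)\,\bfR(\bx,\by)+g'(t)\,\bfQ(\bx,\by).
\]
The kernel therefore depends on $\phi$ only through $g$ and $g'$, and the $a_\ell^{(m-1)}$ are exactly the Legendre coefficients of $g$. The strategy is to expand $g$ in a series of $P_\ell'$ and compare termwise with Lemma~\ref{lem:vec_addition}. That lemma shows that $\mathcal S_\ell$ has precisely the shape $h(t)\bfR+h'(t)\bfQ$ with $h=\frac{2\ell+1}{4\pi\lambda_\ell}P_\ell'$.

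\emph{Step 1: expand $g$ in derivatives of Legendre polynomials.} We use the classical identity
\[
(2\ell+1)P_\ell=P_{\ell+1}'-P_{\ell-1}'.
\]
Substituting it into \eqref{eq:Four-LegendreSeries} for $g$ and regrouping by $P_\ell'$ gives
\[
g(t)=\sum_{\ell\ge1}\frac{1}{4\pi}\bigl(a_{\ell-1}^{(m-1)}-a_{\ell+1}^{(m-1)}\bigr)P_\ell'(t).
\]
Equivalently, with $c_\ell:=\frac{1}{4\pi}\bigl(a_{\ell-1}-a_{\ell+1}\bigr)$, we have $g=\sum c_\ell P_\ell'$. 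The regrouping is a rearrangement of a telescoping sum, and I would justify it via absolute convergence, as follows. Since $|P_\ell'|\le \ell(\ell+1)/2$ and $|P_\ell''|\lesssim \ell^4$ on $[-1,1]$, the hypothesis $\sum\ell^2|a_{\ell-1}-a_{\ell+1}|<\infty$ makes $\sum c_\ell P_\ell'$ converge absolutely and uniformly. Call its sum $\tilde g$.

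To identify $\tilde g$ with $g$, compute the Legendre coefficients of $\tilde g$ termwise using uniform convergence. By the identity above, $P_\ell'$ is a combination of $P_{\ell-1},P_{\ell-3},\dots$, and the sum telescopes back to $a_j$. An alternative route is to note that both $\tilde g$ and $g$ have derivatives whose series agree in the distributional sense. Either way, $\tilde g=g$ by completeness of the Legendre polynomials in $L_2$, together with continuity.

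\emph{Step 2: differentiate termwise.} This is the delicate step, and I expect it to be the main obstacle. We need $g'=\sum c_\ell P_\ell''$ uniformly on $[-1,1]$, which requires $\sum |c_\ell|\,\|P_\ell''\|_\infty<\infty$. With $\|P_\ell''\|_\infty\sim\ell^4/8$, the stated $\ell^2$ hypothesis does not obviously suffice.

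I would first try to avoid differentiating $g$ on all of $[-1,1]$. The products $\bfQ\,P_\ell''(t)$ carry the factor $\|\bx\times\by\|^2=1-t^2$, and $(1-t^2)|P_\ell''(t)|\lesssim \ell^2$ uniformly, which follows from the Legendre differential equation and the bound on $|P_\ell'|$. So the series $\sum c_\ell P_\ell''\bfQ$ converges absolutely and uniformly on compact subsets of $-1<t<1$ under exactly the $\ell^2$ hypothesis. On such subsets termwise differentiation of $g$ is legitimate, and the identity extends to $t=\pm1$ by continuity of both sides, since there $\bfQ=0$. Note that $g\in C^1$ is given, and uniform convergence of the differentiated series on compacta identifies its sum with $g'$.

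\emph{Step 3: read off the multipliers.} Combining Steps 1 and 2 with Lemma~\ref{lem:vec_addition},
\[
\bfK_\dive(\bx,\by)=\sum_{\ell\ge1}c_\ell\bigl(P_\ell'\bfR+P_\ell''\bfQ\bigr)=\sum_{\ell\ge1}c_\ell\frac{4\pi\lambda_\ell}{2\ell+1}\,\mathcal S_\ell(\bx,\by).
\]
This is the expansion \eqref{eq:Fourier_expan_Matrix} with $\kappa_\ell=\frac{\lambda_\ell}{2\ell+1}(a_{\ell-1}-a_{\ell+1})$, i.e. \eqref{eq:Coeff_Kdiv_general}, and the series converges absolutely and uniformly.

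\emph{Positive definiteness.} Take distinct nodes $\bx_j$ and tangent vectors $\balpha_j\in T_{\bx_j}\bbS^2$. Then
\[
\sum_{j,k}\balpha_j^\top\bfK_\dive(\bx_j,\bx_k)\balpha_k=\sum_{\ell,k'}\kappa_\ell\Bigl|\sum_j\balpha_j^\top\mathbf y_{\ell,k'}(\bx_j)\Bigr|^2\ge0.
\]
If this vanishes and all $\kappa_\ell>0$, then the functional $\bff\mapsto\sum_j\balpha_j^\top\bff(\bx_j)$ annihilates every $\mathbf y_{\ell,k}$. Hence it annihilates every $\bL_*p$ for scalar polynomials $p$. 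Finally, choose $p$ with prescribed gradients at the distinct nodes, for instance $p$ vanishing to first order at all nodes but one, with prescribed gradient there. Since $\bL_*p(\bx_j)=\bx_j\times\nabla_*p(\bx_j)$ ranges over all of $T_{\bx_j}\bbS^2$, this forces every $\balpha_j=\mathbf 0$.
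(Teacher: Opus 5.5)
Your proposal is correct and takes the paper's route. You expand $\phi^{(m-1)}$ in $P_\ell'$ via $(2\ell+1)P_\ell=P_{\ell+1}'-P_{\ell-1}'$, differentiate termwise, and use the factor $1-t^2$ carried by $\bfQ$ so that the matrix series converges under the $\ell^2$ hypothesis. You then read off $\kappa_\ell$ from Lemma~\ref{lem:vec_addition} and conclude positivity from the quadratic form.

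Your justifications are more explicit than the paper's in two places. First, you obtain local uniform convergence of $\sum c_\ell P_\ell''$ on $(-1,1)$ from $(1-t^2)|P_\ell''|\le 2\lambda_\ell$, where the paper instead appeals to distributional differentiation. Second, you give a polynomial Hermite-interpolation argument showing that vanishing of every $\sum_j\balpha_j^\top\mathbf y_{\ell,k}(\bx_j)$ forces $\balpha_j=\mathbf 0$, which the paper only asserts.
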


\begin{proof}
First, we recall the standard identities for Legendre polynomials:
\[
    P_0(t)=P_1'(t),
    \quad
    (2\ell+1)P_\ell(t)=P_{\ell+1}'(t)-P_{\ell-1}'(t),
    \quad \ell\ge 1.
\]
Applying these identities to the Legendre expansion of
$\phi^{(m-1)}$ gives
\begin{equation}
\label{eq:bell}
\begin{split}
    \phi^{(m-1)}(t)
    &= \sum_{\ell=0}^{\infty}\frac{2\ell+1}{4\pi}a_\ell^{(m-1)}P_\ell(t)\\
    &=
    \frac{1}{4\pi}a_0^{(m-1)}P_1'(t)
    +
    \sum_{\ell=1}^{\infty}
    \frac{a_\ell^{(m-1)}}{4\pi}
    \bigl(P_{\ell+1}'(t)-P_{\ell-1}'(t)\bigr) \notag \\
    &=
    \sum_{\ell=1}^{\infty}
    \frac{1}{4\pi}
    \Big(
        a_{\ell-1}^{(m-1)}-a_{\ell+1}^{(m-1)}
    \Big)P_\ell'(t) \notag.
    \end{split}
\end{equation}
Since
$\|P_\ell'(t)\|_{L_\infty}\le C\ell^2$,
the Legendre series for $\phi^{(m-1)}(t)$ converge absolutely and uniformly. Differentiating \eqref{eq:bell} in the distributional sense on $(-1,1)$
yields
$$\phi^{(m)}(t)
    =
    \sum_{\ell=1}^{\infty} b_{\ell}^{(m-1)}P_\ell''(t),\quad b_{\ell}^{(m-1)}=\frac{1}{4\pi}
    \Big(
        a_{\ell-1}^{(m-1)}-a_{\ell+1}^{(m-1)}
    \Big).$$
Although this series need not converge uniformly by itself, the matrix
combination does. Indeed, Lemma~\ref{lem:vec_addition} and the
vector spherical harmonic addition formula imply
\[
\left\|
P_\ell'(t)\bfR(\bx,\by)
+
P_\ell''(t)\bfQ(\bx,\by)
\right\|
\le C\lambda_\ell
\le C\ell^2.
\]
The assumed summability therefore gives absolute and uniform convergence of
the matrix series. Its uniform limit agrees distributionally, and hence
pointwise, with
$\phi^{(m-1)}(t)\bfR+\phi^{(m)}(t)\bfQ$. Consequently, we have
\begin{equation}\label{eq:Kdiv_expan_RQ}
	\begin{aligned}
		\bfK_{\dive}(\bx,\by)
		&=
		\sum_{\ell=1}^{\infty}
		b_\ell^{(m-1)}
		\bigl(
		P_\ell'(t)\bfR
		+
		P_\ell''(t)\bfQ
		\bigr)\\
		&=
		\sum_{\ell=1}^{\infty}
		\frac{4\pi\lambda_\ell}{2\ell+1}
		b_\ell^{(m-1)}
		\sum_{k=1}^{2\ell+1}
		\mathbf y_{\ell,k}(\bx)
		\mathbf y_{\ell,k}(\by)^\top.
	\end{aligned}
\end{equation}
Equation \eqref{eq:Coeff_Kdiv_general} follows immediately.

It remains to verify positive definiteness. Let
$\bx_1,\ldots,\bx_J$ be pairwise distinct nodes and let
$\boldsymbol\eta_j\in T_{\bx_j}\bbS^2$. The quadratic form associated with
$\bfK_\dive$ is
\[
\sum_{i,j=1}^{J}
\boldsymbol\eta_i^\top
\bfK_\dive(\bx_i,\bx_j)
\boldsymbol\eta_j
=
\sum_{\ell=1}^{\infty}
\kappa_\ell
\sum_{k=1}^{2\ell+1}
\Big|
\sum_{j=1}^{J}
\boldsymbol\eta_j^\top
\mathbf y_{\ell,k}(\bx_j)
\Big|^2.
\]
If this expression vanishes, the positivity of the multipliers implies
\[
\sum_{j=1}^{J}
\boldsymbol\eta_j^\top
\mathbf y_{\ell,k}(\bx_j)
=0
\qquad
\text{for all }\ell,k.
\]
This gives $\boldsymbol\eta_j=0$ for all $j$. Hence $\bfK_\dive$ is positive definite.
\end{proof}


The classical
potential-based construction \cite{Drake_2021SISC_partition,Drake_2022SISC_implicit,
Fuselier_2009MCoM_error,Fuselier_2009SINUM_stability} corresponds to $m=2$. 
 In this case,
termwise differentiation of the scalar zonal expansion yields
$\phi'(t)
    =
    \sum_{\ell=1}^{\infty}
    \frac{2\ell+1}{4\pi}
    \widehat{\phi}(\ell)P_\ell'(t)$ and $
    \phi''(t)
    =
    \sum_{\ell=1}^{\infty}
    \frac{2\ell+1}{4\pi}
    \widehat{\phi}(\ell)P_\ell''(t).$
Hence, the associated divergence-free kernel is
\[
    \bfK_{\dive}(\bx,\by)
    =
    \sum_{\ell=1}^{\infty}
    \frac{2\ell+1}{4\pi}
    \widehat{\phi}(\ell)
    \bigl(
        P_\ell'(t)\bfR
        +
        P_\ell''(t)\bfQ
    \bigr).
\]
Applying the vector addition formula in
Lemma~\ref{lem:vec_addition}, we find that the corresponding Fourier coefficients satisfy $\kappa_\ell
    =
    \lambda_\ell\widehat{\phi}(\ell)$.
Thus, for a scalar zonal kernel with coefficients
$\widehat{\phi}(\ell)\asymp(1+\lambda_\ell)^{-\sigma}$, one obtains $\kappa_\ell
    \asymp\ell^{-2\sigma+2}$.
    
By contrast, Corollary~\ref{corol:kdiv_equivalence} below shows that the
lower-order construction corresponding to $m=1$ yields
$\kappa_\ell\asymp \ell^{-2\sigma}$. Thus, the choice $m=1$ improves the decay of the divergence-free Fourier
coefficients by a factor of order $\ell^{-2}$. Equivalently, for the same underlying scalar Fourier decay, the resulting native space is smoother by one Sobolev order; see, for example, \cite{Fuselier_2009SINUM_stability}.

\begin{corollary}
\label{corol:kdiv_equivalence}
Under the assumptions of Theorem~\ref{thm:divergence-free-fourier-expansion}
with $m=1$, one has
\begin{equation}
\label{eq:Kdivell_m1}
    \kappa_\ell
    =\frac{\lambda_\ell}{2\ell+1}\big(\widehat{\phi}(\ell-1)-\widehat{\phi}(\ell+1)\big),
    \quad \ell\ge 1.
\end{equation}
Hence the resulting spectral kernel is positive definite if $\widehat{\phi}(\ell-1)>\widehat{\phi}(\ell+1)$ for $\ell\geq 1$.

Moreover, suppose that $\widehat{\phi}(\ell)=c_\sigma(1+\lambda_\ell)^{-\sigma}$ for $\ell\ge 0$ with $\sigma>1$ and $c_\sigma>0$. Then the coefficients in \eqref{eq:Kdivell_m1}
define, through \eqref{eq:Fourier_expan_Matrix}, a continuous positive definite divergence-free kernel. Moreover,
\[
\kappa_\ell\asymp(1+\lambda_\ell)^{-\sigma},
\quad
\calN_{\bfK_\dive}\simeq\bH_\dive^\sigma(\bbS^2).
\]
\end{corollary}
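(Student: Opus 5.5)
First I specialize Theorem~\ref{thm:divergence-free-fourier-expansion} to $m=1$. In that case $\phi^{(m-1)}=\phi$, so by \eqref{eq:Four-LegendreSeries} we have $a_\ell^{(0)}=2\pi\int_{-1}^1\phi(t)P_\ell(t)\,\mathrm{d}t=\widehat\phi(\ell)$. Inserting this into \eqref{eq:Coeff_Kdiv_general} gives \eqref{eq:Kdivell_m1} directly. Since $\lambda_\ell/(2\ell+1)>0$ for $\ell\ge1$, the condition $\widehat\phi(\ell-1)>\widehat\phi(\ell+1)$ is exactly $\kappa_\ell>0$. The positive-definiteness part of Theorem~\ref{thm:divergence-free-fourier-expansion} then gives the first claim.

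For the second claim I will \emph{not} pass through the pointwise formula \eqref{eq:Matrix_kernel_kappa}. That would require $\phi\in C^1$ and the summability hypothesis, which may fail for $\sigma$ close to $1$. Instead I define $\bfK_\dive$ by the spectral series \eqref{eq:Fourier_expan_Matrix} with the coefficients \eqref{eq:Kdivell_m1}.

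The key step is the two-sided asymptotic $\kappa_\ell\asymp(1+\lambda_\ell)^{-\sigma}$. Write $g(x)=(1+x(x+1))^{-\sigma}$. Then
\[
\widehat\phi(\ell-1)-\widehat\phi(\ell+1)=c_\sigma\bigl(g(\ell-1)-g(\ell+1)\bigr)=-c_\sigma\int_{\ell-1}^{\ell+1}g'(x)\,\mathrm{d}x,
\]
with $-g'(x)=\sigma(2x+1)(1+x^2+x)^{-\sigma-1}>0$ for $x\ge0$. This shows the difference is strictly positive for every $\ell\ge1$, which settles positivity. For the size, on $[\ell-1,\ell+1]$ with $\ell\ge1$ we have $2x+1\asymp\ell$ and $1+x^2+x\asymp(1+\lambda_\ell)\asymp\ell^2$, with constants independent of $\ell$. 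The case $\ell=1$, where $x$ can reach $0$, is one fixed positive number and can be absorbed into the constants. Hence the difference is $\asymp \ell\cdot\ell^{-2\sigma-2}$. Multiplying by $\lambda_\ell/(2\ell+1)\asymp\ell$ gives $\kappa_\ell\asymp\ell^{-2\sigma}\asymp(1+\lambda_\ell)^{-\sigma}$. This is \eqref{eq:decay_condition_kernel}, and the remark following it yields $\calN_{\bfK_\dive}\simeq\bH_\dive^\sigma(\bbS^2)$.

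Continuity comes from Lemma~\ref{lem:vec_addition}. The block $\mathcal S_\ell(\bx,\by)$ is bounded in norm by $C(2\ell+1)$; for instance, by Cauchy--Schwarz and $\mathcal S_\ell(\bx,\bx)$ having trace $(2\ell+1)/4\pi$. So the series converges absolutely and uniformly as soon as $\sum_\ell \ell^{1-2\sigma}<\infty$, which holds precisely because $\sigma>1$. The limit is therefore a continuous kernel.

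Positive definiteness at distinct nodes follows from the quadratic-form argument at the end of the proof of Theorem~\ref{thm:divergence-free-fourier-expansion}, which only used $\kappa_\ell>0$ and this uniform convergence. I expect the only delicate points to be the uniformity of the constants at small $\ell$ and using the weaker convergence criterion $\sigma>1$ rather than the $\ell^2$-summability hypothesis. Both are handled above.
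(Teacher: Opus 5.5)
Your proposal is correct and follows the paper's proof essentially step for step: $a_\ell^{(0)}=\widehat\phi(\ell)$, strict decrease of $h$ (you integrate $g'$ over $[\ell-1,\ell+1]$ where the paper uses the mean value theorem), the two-sided bound $\kappa_\ell\asymp\ell^{-2\sigma}$, absolute uniform convergence from $\|\mathcal S_\ell(\bx,\by)\|\le C(2\ell+1)$ together with $\sigma>1$, and positive definiteness via the quadratic-form argument of Theorem~\ref{thm:divergence-free-fourier-expansion}. One small clarification: the summability hypothesis in fact holds for every $\sigma>1$, since its terms are $\asymp\ell^{1-2\sigma}$; what fails for $1<\sigma\le 2$ is $\phi\in C^1[-1,1]$, so bypassing \eqref{eq:Matrix_kernel_kappa} is indeed the right choice.
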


\begin{proof}
    For $m=1$, \eqref{eq:Legendre_expan_varphi} gives
$a_\ell^{(0)}=\widehat{\phi}(\ell)$, so
\eqref{eq:Kdivell_m1} follows from
\eqref{eq:Coeff_Kdiv_general}. To prove positivity and determine the decay, define
\[
    h(x):=c_\sigma(1+x(x+1))^{-\sigma},
    \quad x\ge 0.
\]
By the mean value theorem, for each $\ell\ge 1$ there exists
$\xi_\ell\in(\ell-1,\ell+1)$ such that
\[
    \widehat{\phi}(\ell-1)-\widehat{\phi}(\ell+1)
    =h(\ell-1)-h(\ell+1)
    =
    -2h'(\xi_\ell).
\]
Since
\[
h'(x)=-c_\sigma\cdot\sigma(2x+1)(1+x(x+1))^{-\sigma-1}<0,
\]
we have
$\kappa_\ell>0$ for all $\ell\ge 1$, which implies that
$\bfK_\dive$ is positive definite on tangent data at pairwise distinct nodes.

For $\ell\ge2$, the relation $\xi_\ell\in(\ell-1,\ell+1)$ gives
$2\xi_\ell+1\asymp\ell$ and
$1+\xi_\ell(\xi_\ell+1)\asymp\ell^2$. Hence
\[
h(\ell-1)-h(\ell+1)\asymp\ell^{-2\sigma-1},
\]
and \eqref{eq:Kdivell_m1} yields
\[
\kappa_\ell\asymp
\frac{\ell^2}{\ell}\,\ell^{-2\sigma-1}
\asymp\ell^{-2\sigma}
\asymp(1+\lambda_\ell)^{-\sigma}.
\]
The degree $\ell=1$ is absorbed into the constants. Furthermore,
$\|\mathcal S_\ell(\bx,\by)\|\le C(2\ell+1)$, so the series
\eqref{eq:Fourier_expan_Matrix} converges absolutely and uniformly because $\sigma>1$. The native-space equivalence follows from the multiplier decay and the characterization in Section~2.2.
\end{proof}

\begin{remark}
\emph{Corollary~\ref{corol:kdiv_equivalence} establishes the one-order gain for the case $m=1$. It also motivates the formal $m=0$ antiderivative
	construction, but the present paper does not establish a general
	positive-definiteness or native-space theorem for that case. Such results
	require additional conditions on the underlying radial kernel; see \cite{Sun_2026_error}. Nevertheless, the numerical experiments reported below provide evidence for the effectiveness of these lower-order constructions.}
\end{remark}

\subsection{A multiplier-preserving construction}
The preceding $m=2$ relation contains a factor of $\lambda_\ell$. On the
sphere, this factor can be cancelled by first applying the inverse
Laplace--Beltrami operator to the mean-zero component of the scalar zonal
kernel. Let
\[
c_\phi:=\frac{\widehat{\phi}(0)}{4\pi},
\quad
u:=(-\Delta_*)^{-1}(\phi-c_\phi),
\quad
\widehat{u}(0)=0.
\]
Then, we have
$\widehat{u}(\ell)
=\frac{\widehat{\phi}(\ell)}{\lambda_\ell}$.
Writing $\psi:=u'$, the divergence-free kernel generated by $u$ is
\[
\bfK_{\dive}(\bx,\by)
=
\psi(t)\bfR(\bx,\by)
+
\psi'(t)\bfQ(\bx,\by).
\]
For a zonal function,
$\Delta_*u=((1-t^2)u')'$. Hence
$-\Delta_*u=\phi-c_\phi$ gives
\[
\psi(t)
=
\frac{1}{1-t^2}
\int_{-1}^{t}
\bigl(c_\phi-\phi(s)\bigr)\,\mathrm{d}s,
\quad
-1<t<1.
\]
The vector multipliers of this construction satisfy
\[
\kappa_\ell
=
\lambda_\ell\widehat{u}(\ell)
=
\widehat{\phi}(\ell),
\quad
\ell\ge1.
\]
This sphere-specific multiplier-preserving kernel is distinct from the
$m=1$ case. Explicit inverse potentials for the Poisson,
Gaussian, and selected Wendland kernels are given in
\cite[Table~1]{Sun_2026_vector}.

\begin{theorem}
	\label{thm:special-zonal-construction}
	Let $\phi\in C([-1,1])$ be a zonal kernel satisfying
	$\sum_{\ell=1}^{\infty}
	\ell\,
	|\widehat{\phi}(\ell)|
	<\infty$.
	Let $u$ be the mean-zero solution of
	$-\Delta_*u=\phi-\widehat{\phi}(0)/(4\pi)$, and set $\psi=u'$. Define
	\[
	\bfK_{\dive}(\bx,\by)
	:=
	\sum_{\ell=1}^{\infty}
	\widehat{\phi}(\ell)
	\sum_{k=1}^{2\ell+1}
	\mathbf y_{\ell,k}(\bx)
	\mathbf y_{\ell,k}(\by)^\top.
	\]
	This series converges absolutely and uniformly. For $-1<t<1$, it agrees with
	\[
	\bfK_{\dive}(\bx,\by)
	=
	\psi(t)\bfR(\bx,\by)
	+
	\psi'(t)\bfQ(\bx,\by),
	\]
	where the expression at $t=\pm1$ is defined by its continuous spectral
	extension. Consequently, if $\widehat{\phi}(\ell)>0$ for all $\ell\ge1$,
	then $\bfK_{\dive}$ is positive definite on tangent
	data at pairwise distinct nodes. If, in addition,
	\begin{equation}\label{eq:decay_property}
		\widehat{\phi}(\ell)
		\asymp
		(1+\lambda_\ell)^{-\sigma},
		\quad
		\ell\ge1,
		\quad
		\sigma>1,
	\end{equation}
	then the native space $\calN_{\bfK_\dive}$ is norm-equivalent to
	$\bH_\dive^{\sigma}(\bbS^2)$.
\end{theorem}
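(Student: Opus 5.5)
The plan is to prove the four claims in order: uniform convergence of the spectral series, agreement with the closed form $\psi\bfR+\psi'\bfQ$ on $(-1,1)$, positive definiteness, and the native-space identification. Each step reuses an argument from the proofs of Theorem~\ref{thm:divergence-free-fourier-expansion} and Corollary~\ref{corol:kdiv_equivalence}.

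\textbf{Convergence.} By Lemma~\ref{lem:vec_addition}, the degree-$\ell$ block is
\[
\mathcal S_\ell(\bx,\by)=\frac{2\ell+1}{4\pi\lambda_\ell}\bigl(P_\ell''(t)\bfQ+P_\ell'(t)\bfR\bigr).
\]
Its norm is bounded by $C(2\ell+1)$. One can see this from the Markov-type bounds $|P_\ell'|\le \ell^2$ and $|P_\ell''|(1-t^2)\lesssim \ell^2$, using $\|\bfQ\|=1-t^2$ and $\|\bfR\|\le 2$. More cleanly, Cauchy--Schwarz with the trace identity $\sum_k|\mathbf y_{\ell,k}(\bx)|^2=(2\ell+1)/(4\pi)$ gives the same bound; that identity is obtained by taking the trace of $\mathcal S_\ell(\bx,\bx)$. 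The hypothesis $\sum\ell|\widehat\phi(\ell)|<\infty$ then gives absolute and uniform convergence by the Weierstrass M-test, so $\bfK_\dive$ is continuous on $\bbS^2\times\bbS^2$.

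\textbf{Closed form.} Since $\widehat u(\ell)=\widehat\phi(\ell)/\lambda_\ell$, the scalar series for $u$ converges absolutely because $\sum\ell|\widehat\phi(\ell)|<\infty$. I then write $\psi=u'=\sum_{\ell\ge1}\frac{2\ell+1}{4\pi}\widehat u(\ell)P_\ell'$. Using $|P_\ell'|\le\ell^2$, this series is dominated by $\sum \ell\,|\widehat\phi(\ell)|$, so it converges uniformly on $[-1,1]$. This justifies termwise differentiation of the series for $u$, and identifies $\psi$ with the integral formula through $((1-t^2)u')'=c_\phi-\phi$.

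The term $\psi'$ is the delicate one: the series $\sum\frac{2\ell+1}{4\pi}\widehat u(\ell)P_\ell''$ need not converge uniformly up to the endpoints. I will handle it as in the proof of Theorem~\ref{thm:divergence-free-fourier-expansion}. On each compact $[-1+\delta,1-\delta]$, the bound $|P_\ell''(t)|\le C_\delta\ell^{3/2}$ (Bernstein-type estimates for ultraspherical polynomials) may not suffice directly. Instead I will use the matrix combination: the combined series $\sum\frac{2\ell+1}{4\pi}\widehat u(\ell)(P_\ell'\bfR+P_\ell''\bfQ)$ equals the spectral series $\sum\widehat\phi(\ell)\mathcal S_\ell$, which converges uniformly by the first step. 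Its $\bfR$-part converges uniformly to $\psi\bfR$. So the $\bfQ$-part converges uniformly as well, and taking $\bx,\by$ with $\bx\times\by\neq0$ shows that the scalar series $\sum\frac{2\ell+1}{4\pi}\widehat u(\ell)P_\ell''(t)$ converges uniformly on compacts of $(-1,1)$. A uniform limit of derivatives of a uniformly convergent series is the derivative of the limit, so it equals $\psi'(t)$. This gives agreement for $-1<t<1$, and the values at $t=\pm1$ are by definition the continuous spectral extension.

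\textbf{Positive definiteness and native space.} Positive definiteness follows verbatim from the quadratic-form argument in Theorem~\ref{thm:divergence-free-fourier-expansion}, with $\kappa_\ell=\widehat\phi(\ell)>0$. That argument rests on the fact that finite combinations of point evaluations $\bfeta_j^\top\mathbf y_{\ell,k}(\bx_j)$ at distinct nodes with nonzero tangent $\bfeta_j$ cannot annihilate all divergence-free harmonics; I will take this from the same argument as cited there. Under \eqref{eq:decay_property}, $\kappa_\ell\asymp(1+\lambda_\ell)^{-\sigma}$ is exactly \eqref{eq:decay_condition_kernel}. The equivalence $\calN_{\bfK_\dive}\simeq\bH^\sigma_\dive(\bbS^2)$ then follows from the characterization in Section~2.2. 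The condition $\sigma>1$ makes $\sum\ell^{1-2\sigma}<\infty$, so it is consistent with the summability hypothesis.

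The main obstacle is the pointwise identification of $\psi'$ in the closed-form step. The route above, using the $\bfR$ part and the spectral series to control the $\bfQ$ part, is the one I would try first.
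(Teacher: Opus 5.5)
Your proposal is correct and follows essentially the paper's proof. Both use the bound $\|\mathcal S_\ell(\bx,\by)\|\le C(2\ell+1)$ together with $\sum_\ell \ell|\widehat\phi(\ell)|<\infty$ for absolute and uniform convergence, the vector addition formula to identify the series with $\psi\bfR+\psi'\bfQ$ in the interior, and then the quadratic-form and multiplier-decay arguments.

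Your recovery of the $P_\ell''$-series for $\psi'$ from the uniformly convergent matrix series (the device from Theorem~\ref{thm:divergence-free-fourier-expansion}) just makes explicit what the paper gets from the integral formula and distributional differentiation. Your hedge there is unnecessary: the interior bound $|P_\ell''(t)|\le C_\delta\ell^{3/2}$ already gives terms of size $C_\delta\ell^{1/2}|\widehat\phi(\ell)|$, which are summable, so the direct route also works.
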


\begin{proof}
	The inverse Laplace--Beltrami relation gives
	\[
	\psi(t)
	=
	\sum_{\ell=1}^{\infty}
	\frac{2\ell+1}{4\pi\lambda_\ell}
	\widehat{\phi}(\ell)P_\ell'(t)
	\]
	in the distributional sense and pointwise wherever the corresponding scalar
	series converges. The integral formula above shows that
	$\psi\in C^1((-1,1))$. Applying the vector addition formula to
	$\psi\bfR+\psi'\bfQ$ in the interior gives the stated spectral expansion.
	Conversely, the bound
	\[
	\Big\|
	\sum_{k=1}^{2\ell+1}
	\mathbf y_{\ell,k}(\bx)
	\mathbf y_{\ell,k}(\by)^\top
	\Big\|
	\le C(2\ell+1)
	\]
	and the summability assumption show that this matrix-valued series converges
	absolutely and uniformly. Its continuous limit provides the extension at
	$t=\pm1$.
	
	If $\widehat{\phi}(\ell)$ is positive, positive definiteness
	follows from the argument used in
	Theorem~\ref{thm:divergence-free-fourier-expansion}. Finally,
	\eqref{eq:decay_property} and the multiplier characterization of the native
	space give
	$\calN_{\bfK_{\dive}}
	\simeq
	\bH_\dive^\sigma(\bbS^2)$.
\end{proof}

\section{Divergence-free interpolation}
Let $X=\{\bx_j\}_{j=1}^N\subset\mathbb S^2$ be a set of distinct
nodes. For each $j$, choose a matrix
$\calP_j=[\bfd_j,\bfe_j]\in\mathbb R^{3\times2}$ whose columns form an
orthonormal basis of $T_{\bx_j}\mathbb S^2$. Thus,
\[
\calP_j^\top \calP_j=\bfI_2,
\quad
\calP_j \calP_j^\top=\bfI_3-\bx_j\bx_j^\top.
\]
The obvious choices for the tangent vectors $\bfd_j$ and $\bfe_j$ are the standard meridional and zonal vectors, respectively:
\begin{equation*}
    \bfd_j=\frac{1}{\sqrt{1-x_{j,3}^2}}\begin{bmatrix}-x_{j,3}x_{j,1},-x_{j,3}x_{j,2},1-x_{j,3}^2\end{bmatrix}^\top, \quad \bfe_j=\frac{1}{\sqrt{1-x_{j,3}^2}}\begin{bmatrix}-x_{j,2},x_{j,1},0\end{bmatrix}^\top.
\end{equation*}

Let $\bff$ be a tangential vector field and define its local data
coordinates by
\[
\bgg_j:=\calP_j^\top\bff(\bx_j)\in\mathbb R^2.
\]
We seek an interpolant of the form
\begin{equation}\label{eq:interpolant_form}
I_X\bff(\bx)
=
\sum_{j=1}^N
\bfK_{\dive}(\bx,\bx_j)\calP_j\bc_j,
\quad
\bc_j\in\mathbb R^2.
\end{equation}
Imposing $I_X\bff(\bx_i)=\bff(\bx_i)$ and expressing the equations in
the local tangent frames gives
\begin{equation}\label{eq:linear_system}
\sum_{j=1}^N
\underbrace{
\calP_i^\top\bfK_{\dive}(\bx_i,\bx_j)\calP_j
}_{\displaystyle \calA_{ij}}
\bc_j
=
\bgg_i,
\quad 1\le i\le N.
\end{equation}
Accordingly, the interpolation matrix
$\calA_{\bfK_{\dive},X}\in\mathbb R^{2N\times2N}$ consists of the
$2\times2$ blocks $\calA_{ij}$. If $\bfK_{\dive}$ is positive
definite on tangential vectors, then $\calA_{\bfK_{\dive},X}$ is symmetric positive definite.

\begin{proposition}
	\label{prop:interpolation-projection}
	Let $X=\{\bx_1,\ldots,\bx_N\}\subset\bbS^2$ consist of pairwise
	distinct nodes, and let $\calP_j\in\mathbb R^{3\times2}$ be an
	orthonormal frame for $T_{\bx_j}\bbS^2$. Suppose that $\bfK_\dive$ is
	a continuous, symmetric, tangent-valued kernel that is strictly positive
	definite on tangent data and is the reproducing kernel of the
	vector-valued RKHS $\mathcal N_{\bfK_\dive}$.
	Then \eqref{eq:linear_system} has a unique solution for arbitrary tangent
	data. If $\bff\in\mathcal N_{\bfK_\dive}$ and
	\begin{equation}\label{eq:trialsp}
		\calV_{X,\bfK_\dive}
		:=
		\operatorname{span}
		\left\{
		\bfK_\dive(\cdot,\bx_j)\boldsymbol\eta:
		\boldsymbol\eta\in T_{\bx_j}\bbS^2,\ 
		1\le j\le N
		\right\},
	\end{equation}
	then $I_X\bff$ is the
	$\mathcal N_{\bfK_\dive}$-orthogonal projection of $\bff$ onto
	$\calV_{X,\bfK_\dive}$, and
	\begin{equation}\label{eq:minimumNorm}
		\|\bff-I_X\bff\|_{\mathcal N_{\bfK_\dive}}
		=
		\inf_{\bv\in\calV_{X,\bfK_\dive}}
		\|\bff-\bv\|_{\mathcal N_{\bfK_\dive}}.
	\end{equation}
	Moreover, if $\widetilde{\calP}_j=\calP_j\calW_j$, where
	$\calW_j\in O(2)$, and
	$\calW=\operatorname{diag}(\calW_1,\ldots,\calW_N)$, then
	\[
	\widetilde{\calA}_{\bfK_\dive,X}
	=
	\calW^\top\calA_{\bfK_\dive,X}\calW.
	\]
	Consequently, the eigenvalues, the spectral condition number, and the
	interpolated field are independent of the chosen orthonormal tangent frames.
\end{proposition}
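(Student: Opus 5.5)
The proof splits into three parts: unique solvability, the projection property, and frame invariance. All three rest on the reproducing property of $\bfK_\dive$ in $\mathcal N_{\bfK_\dive}$. For every tangent vector $\boldsymbol\eta\in T_{\by}\bbS^2$, $\bfK_\dive(\cdot,\by)\boldsymbol\eta\in\mathcal N_{\bfK_\dive}$ and $\langle \bff,\bfK_\dive(\cdot,\by)\boldsymbol\eta\rangle_{\mathcal N_{\bfK_\dive}}=\boldsymbol\eta^\top\bff(\by)$.

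\emph{Solvability.} For $\bc=(\bc_1,\ldots,\bc_N)\in\mathbb R^{2N}$, set $\balpha_j:=\calP_j\bc_j\in T_{\bx_j}\bbS^2$. The quadratic form is then
\[
\bc^\top\calA_{\bfK_\dive,X}\bc=\sum_{i,j=1}^N\balpha_i^\top\bfK_\dive(\bx_i,\bx_j)\balpha_j .
\]
Symmetry of $\calA_{\bfK_\dive,X}$ follows from $\bfK_\dive(\bx,\by)^\top=\bfK_\dive(\by,\bx)$. Strict positive definiteness on tangent data makes the form positive unless every $\balpha_j=0$. Since $\calP_j^\top\calP_j=\bfI_2$, $\calP_j$ is injective, so $\balpha_j=0$ forces $\bc_j=0$. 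Hence $\calA_{\bfK_\dive,X}$ is symmetric positive definite, and \eqref{eq:linear_system} has a unique solution for arbitrary data $\bgg_i$.

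\emph{Projection property.} The columns of $\calP_j$ span $T_{\bx_j}\bbS^2$, so $I_X\bff\in\calV_{X,\bfK_\dive}$. Because the kernel is tangent-valued, $I_X\bff(\bx_i)$ is tangent at $\bx_i$. The equations $\calP_i^\top(I_X\bff-\bff)(\bx_i)=0$ together with $\calP_i\calP_i^\top=\bfI_3-\bx_i\bx_i^\top$ then give $I_X\bff(\bx_i)=\bff(\bx_i)$ exactly. By the reproducing property, for every $\boldsymbol\eta\in T_{\bx_j}\bbS^2$,
\[
\langle \bff-I_X\bff,\bfK_\dive(\cdot,\bx_j)\boldsymbol\eta\rangle_{\mathcal N_{\bfK_\dive}}=\boldsymbol\eta^\top(\bff-I_X\bff)(\bx_j)=0 .
\]
Thus $\bff-I_X\bff$ is orthogonal to $\calV_{X,\bfK_\dive}$. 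Since $I_X\bff$ lies in this finite-dimensional, hence closed, subspace, $I_X\bff$ is the orthogonal projection of $\bff$. The best-approximation identity \eqref{eq:minimumNorm} then follows from the Pythagorean identity $\|\bff-\bv\|^2=\|\bff-I_X\bff\|^2+\|I_X\bff-\bv\|^2$.

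\emph{Frame independence.} Block-wise, $\widetilde{\calA}_{ij}=\calW_i^\top\calP_i^\top\bfK_\dive(\bx_i,\bx_j)\calP_j\calW_j$, which is exactly $\widetilde{\calA}_{\bfK_\dive,X}=\calW^\top\calA_{\bfK_\dive,X}\calW$. The matrix $\calW$ is orthogonal, so this is an orthogonal similarity; eigenvalues and the spectral condition number are therefore unchanged. For the interpolant, the new data are $\widetilde\bgg=\calW^\top\bgg$, so $\widetilde\bc=\calW^\top\bc$ solves the new system. Then $\widetilde{\calP}_j\widetilde\bc_j=\calP_j\calW_j\calW_j^\top\bc_j=\calP_j\bc_j$, so the interpolant is the same field. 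Alternatively, uniqueness of the projection in the second part already gives this.

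No step is genuinely hard. The only point needing care is the passage from the $2$-dimensional frame equations to full pointwise interpolation. This uses that the kernel is tangent-valued, together with $\calP_i\calP_i^\top$ being the tangent projector.
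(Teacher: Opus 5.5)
Your proposal is correct and follows essentially the same route as the paper: you obtain positive definiteness of the quadratic form via $\balpha_j=\calP_j\bc_j$, orthogonality of $\bff-I_X\bff$ to $\calV_{X,\bfK_\dive}$ via the reproducing property, and frame invariance from the blockwise identity $\widetilde{\calA}_{ij}=\calW_i^\top\calA_{ij}\calW_j$ with $\widetilde\bc=\calW^\top\bc$. Your extra step upgrading the frame equations to full pointwise interpolation is valid, since it uses that both $I_X\bff$ and $\bff$ are tangent, but the orthogonality argument does not need it: every tangent $\boldsymbol\eta\in T_{\bx_j}\bbS^2$ has the form $\calP_j\bc$, so the frame equations already give $\boldsymbol\eta^\top(\bff-I_X\bff)(\bx_j)=0$.
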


\begin{proof}
	For a coefficient vector
	$\bc=(\bc_1^\top,\ldots,\bc_N^\top)^\top\ne0$, set
	$\boldsymbol\eta_j=\calP_j\bc_j\in T_{\bx_j}\bbS^2$. Since the columns
	of $\calP_j$ are orthonormal, the vectors $\boldsymbol\eta_j$ are not all
	zero. Positive definiteness gives
	\[
	\bc^\top\calA_{\bfK_\dive,X}\bc
	=
	\sum_{i,j=1}^N
	\boldsymbol\eta_i^\top
	\bfK_\dive(\bx_i,\bx_j)
	\boldsymbol\eta_j
	>0.
	\]
	Thus, $\calA_{\bfK_\dive,X}$ is symmetric positive definite, and
	\eqref{eq:linear_system} has a unique solution.
	
	The reproducing property and the interpolation conditions imply
	\[
	\left\langle
	\bff-I_X\bff,\,
	\bfK_\dive(\cdot,\bx_j)\boldsymbol\eta
	\right\rangle_{\mathcal N_{\bfK_\dive}}
	=
	\boldsymbol\eta^\top
	\bigl(\bff-I_X\bff\bigr)(\bx_j)
	=
	0.
	\]
	Hence $\bff-I_X\bff$ is orthogonal to
	$\calV_{X,\bfK_\dive}$, proving both the projection property and
	\eqref{eq:minimumNorm}.
	
	
	Finally, the transformed matrix blocks satisfy
	\[
	\widetilde{\calA}_{ij}
	=
	\calW_i^\top\calA_{ij}\calW_j,
	\]
	which yields
	$\widetilde{\calA}_{\bfK_\dive,X}
	=\calW^\top\calA_{\bfK_\dive,X}\calW$.
	The corresponding coefficient vectors satisfy
	$\widetilde{\bc}=\calW^\top\bc$,
	\[
	\calP_j\calW_j\widetilde{\bc}_j
	=
	\calP_j\bc_j.
	\]
	Therefore, changing the tangent frames leaves the interpolated field
	unchanged. Since $\calW$ is orthogonal, the two interpolation matrices are
	orthogonally similar and consequently have the same eigenvalues and spectral
	condition number.
\end{proof}

\section{Stability and error estimates}
In this section, we establish stability and error estimates for divergence-free interpolation with matrix-valued kernels on $\mathbb{S}^2$. We state the assumptions directly in terms of the Fourier multipliers and conduct the analysis in spherical Fourier space. The kernels need not possess ambient radial extensions; instead, stability is derived directly from their multipliers, making the argument technically distinct from existing approaches. Adapting ideas from \cite{Karvonen_2025_general}, we also establish a superconvergence result in the Hilbert scale generated by the kernel.

\begin{assumption}\label{ass:spectral-kernel}
    Assume that $\kappa_\ell$ satisfies the decay condition \eqref{eq:decay_condition_kernel} with $\sigma>1$. Suppose that the divergence-free kernel $\bfK_\dive$ has the expansion
    \begin{equation}\label{eq:general-spectral-kernel-section5}
\bfK_\dive(\bx,\by)
=\sum_{\ell=1}^\infty\kappa_\ell
\sum_{k=1}^{2\ell+1}
\mathbf y_{\ell,k}(\bx)\mathbf y_{\ell,k}(\by)^\top,
\quad \kappa_\ell>0.
\end{equation}
\end{assumption}

\subsection{Stability}
For a node set $X=\{\bx_1,\ldots,\bx_N\}\subset \bbS^2$, define the \emph{mesh norm}
(or \emph{fill distance}) and the \emph{separation distance}, respectively, by
 $$h_{X}:=\sup_{\bx\in\bbS^2}\min_{\bx_j\in X}\dist(\bx,\bx_j),\quad q_X:=\frac{1}{2}\min_{i\ne j}\dist(\bx_i,\bx_j),$$
where $\dist(\bx,\by):=\arccos(\bx\cdot\by)$ denotes the geodesic distance. Let $\bfK_\dive$ satisfy Assumption~\ref{ass:spectral-kernel}. The associated approximation space is defined as in \eqref{eq:trialsp}.
The following theorem provides a lower
bound for the smallest eigenvalue of $\calA_{\bfK_\dive,X}$. 


\begin{theorem}\label{thm:stability}
Let $X\subset\bbS^2$ be a finite set of distinct nodes with separation
distance $q_X$, and let $\bfK_\dive$ be the divergence-free
matrix-valued kernel satisfying
Assumption~\ref{ass:spectral-kernel}. Then there exist constants $C>0$ and
$\zeta_0>0$, independent of $q_X$ and $N$, such that
\begin{equation}\label{eq:stability_bound}
\lambda_{\min}(\calA_{\bfK_\dive,X})
\ge
C L^2 \min_{1\le \ell\le L}\kappa_\ell,
\quad
L:=\left\lceil \frac{\zeta_0}{q_X}\right\rceil .
\end{equation}
\end{theorem}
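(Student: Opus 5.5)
The plan is the standard Narcowich--Ward localization argument. We write $\bfK_\dive$ as a sum of two positive semidefinite kernels: a band-limited kernel $\bfK_L$ with compactly supported multipliers, plus a nonnegative remainder. Then we bound the quadratic form of $\bfK_L$ from below by its diagonal part, and absorb the off-diagonal terms using a kernel bound that decays quickly with the distance between nodes.

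\emph{Step 1 (dominating kernel).} Fix a smooth cutoff $g:[0,\infty)\to[0,1]$ supported in $[0,1]$ with $g\equiv1$ on $[0,1/2]$. Put
\[
\bfK_L(\bx,\by):=\sum_{\ell=1}^{L}\mu_\ell\sum_{k}\mathbf y_{\ell,k}(\bx)\mathbf y_{\ell,k}(\by)^\top,\qquad \mu_\ell:=\Big(\min_{1\le j\le L}\kappa_j\Big)\,g(\ell/L).
\]
Then $0\le\mu_\ell\le\kappa_\ell$ for every $\ell$. The representation in the proof of Theorem~\ref{thm:divergence-free-fourier-expansion} shows that, for any coefficients $\bc$,
\[
\bc^\top\calA_{\bfK_\dive,X}\bc\ \ge\ \bc^\top\calA_{\bfK_L,X}\bc .
\]
So it suffices to bound the quadratic form of $\bfK_L$ from below.

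\emph{Step 2 (diagonal).} Lemma~\ref{lem:vec_addition} gives $\mathcal S_\ell(\bx,\bx)=\frac{2\ell+1}{4\pi\lambda_\ell}P_\ell'(1)(\bfI-\bx\bx^\top)$, because $\bfQ(\bx,\bx)=0$ and $\bfR(\bx,\bx)=\bfI-\bx\bx^\top$. Since $P_\ell'(1)=\lambda_\ell/2$, this equals $\frac{2\ell+1}{8\pi}(\bfI-\bx\bx^\top)$. Hence each diagonal block satisfies
\[
\calA_{jj}=\Big(\sum_{\ell}\mu_\ell\tfrac{2\ell+1}{8\pi}\Big)\bfI_2\ \ge\ c\,L^2\min_{\ell\le L}\kappa_\ell\,\bfI_2 ,
\]
using $g(\ell/L)=1$ for $\ell\le L/2$.

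\emph{Step 3 (off-diagonal decay).} This is the main obstacle. We must show that each block of $\bfK_L(\bx,\by)$ satisfies
\[
\|\bfK_L(\bx,\by)\|\le C\Big(\min_{\ell\le L}\kappa_\ell\Big) L^2\,(1+L\theta)^{-3-\epsilon},\qquad \theta=\dist(\bx,\by),
\]
which is localization of the smoothed sums $\sum_\ell g(\ell/L)\frac{2\ell+1}{\lambda_\ell}P_\ell'(t)$ and $\sum_\ell g(\ell/L)\frac{2\ell+1}{\lambda_\ell}P_\ell''(t)$, multiplied by $\bfR$ and $\bfQ$. First I would rewrite $\bfK_L$ as $\bL_{*,\bx}\bL_{*,\by}^\top$ applied to the scalar zonal kernel $\sum_\ell\mu_\ell\lambda_\ell^{-1}\frac{2\ell+1}{4\pi}P_\ell(t)$. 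The factor $\lambda_\ell^{-1}g(\ell/L)$ is a smooth multiplier of the form $L^{-2}G(\ell/L)$ away from $\ell=0$. The standard Narcowich--Petrushev--Ward needlet localization estimates (scalar kernels with smooth compactly supported multipliers, and their first and second surface derivatives, are bounded by $C_k L^{2+j}(1+L\theta)^{-k}$ for any $k$) then give the required decay with two derivatives, so the $L^{-2}$ compensates. If that is too heavy, the fallback is repeated summation by parts in $\ell$ using the Legendre recurrences.

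\emph{Step 4 (Gershgorin).} For each $j$ we sum the off-diagonal bounds over $i\ne j$. We use a shell counting argument: the number of nodes at geodesic distance in $[nq_X,(n+1)q_X)$ from $\bx_j$ is at most $Cn$. With $L=\lceil\zeta_0/q_X\rceil$ this gives
\[
\sum_{i\ne j}\|\calA_{ij}\|\le C L^2\min_{\ell\le L}\kappa_\ell\sum_{n\ge1}n(1+\zeta_0 n)^{-3-\epsilon}\le C'\zeta_0^{-3-\epsilon}L^2\min_{\ell\le L}\kappa_\ell .
\]
Choosing $\zeta_0$ large makes this at most half of the diagonal constant $c$. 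The block Gershgorin theorem for the symmetric matrix $\calA_{\bfK_L,X}$ then yields $\lambda_{\min}\ge \tfrac c2 L^2\min_{\ell\le L}\kappa_\ell$. By Step 1 the same bound holds for $\calA_{\bfK_\dive,X}$, which is \eqref{eq:stability_bound}. Tangent-frame independence follows from Proposition~\ref{prop:interpolation-projection}.
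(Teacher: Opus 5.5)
Your Steps 1, 2 and 4 are sound and follow the paper's structure. The paper also minorizes by a filtered kernel, uses $\bfR(\bx,\bx)=\bfI-\bx\bx^\top$ and $P_\ell'(1)=\lambda_\ell/2$ for the diagonal, and a shell-counting bound for the rest. The gap is in Step 3, and it comes from your low-pass cutoff with $g\equiv1$ on $[0,1/2]$. For this $g$ the bound $\|\bfK_L(\bx,\by)\|\le C(\min_{\ell\le L}\kappa_\ell)L^2(1+L\theta)^{-3-\epsilon}$ is false.

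Your scalar potential is $F_L(t)=\sum_{\ell\ge1}g(\ell/L)\frac{2\ell+1}{4\pi\lambda_\ell}P_\ell(t)$. Its multiplier is $L^{-2}g(u)/(u(u+1/L))$ with $u=\ell/L$, which is not uniformly smooth at $u=0$: the modes $1\le\ell\ll L$ carry full weight. So the needlet estimates do not apply.

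Moreover, the decay is genuinely absent. Let $\tilde\Phi_L$ be the localized, unit-mass low-pass kernel that includes $\ell=0$. Then $((1-t^2)F_L')'=\frac{1}{4\pi}-\tilde\Phi_L$, hence
\[
(1-t^2)F_L'(t)=\frac{1+t}{4\pi}-\int_{-1}^t\tilde\Phi_L(s)\,\mathrm{d}s,
\]
and the integral is negligible once $L\theta\gg1$. Thus $F_L'(\cos\theta)\approx\frac{1}{4\pi(1-\cos\theta)}$. In adapted tangent frames the block of $F_L''\bfQ+F_L'\bfR$ is approximately $\frac{1}{4\pi(1-\cos\theta)}\operatorname{diag}(1,-1)$, which has size $\theta^{-2}$.

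This is the Calder\'on--Zygmund tail of the divergence-free (Hodge) projection. Your $\bfK_L$ inherits it because it is a low-pass filter composed with that projection. With decay only $L^2(L\theta)^{-2}$, the off-diagonal sum in Step 4 for quasi-uniform nodes is of order
\[
L^2\Big(\min_{\ell\le L}\kappa_\ell\Big)\sum_{n\lesssim 1/q_X}n(\zeta_0 n)^{-2}\sim L^2\Big(\min_{\ell\le L}\kappa_\ell\Big)\zeta_0^{-2}\log(1/q_X).
\]
So no $\zeta_0$ independent of $q_X$ exists. Your fallback of summation by parts cannot help, since the tail is really there.

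The repair is the paper's band-pass filter. Take $\mathsf{w}$ supported in $(1/4,7/8)$ with $\mathsf{w}\equiv1$ on $[1/2,3/4]$. Then:
\begin{itemize}
\item $\mathsf{w}(\ell/L)\min_{j\le L}\kappa_j\le\kappa_\ell$ still holds for every $\ell$, so the minorization in Step 1 survives.
\item The diagonal is still $\gtrsim L^2\min_{\ell\le L}\kappa_\ell$, coming from the modes $L/2\le\ell\le 3L/4$.
\item The potential $\calB_{L,\mathsf{w}}$ has no mean term, which removes the $\frac{1+t}{4\pi}$ contribution above.
\end{itemize}
Using $P_\ell^{(j)}=2^{-j}(\ell+1)_jP_{\ell-j}^{(j,j)}$, each $\calB_{L,\mathsf{w}}^{(j)}$ becomes $L^{-2}$ times a Jacobi $(j,j)$ kernel with multiplier $\mathsf{w}(u+j/L)/((u+j/L)(u+(j+1)/L))$. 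This multiplier is supported in $[1/8,1]$ and smooth uniformly in $L$. That yields $|\calB_{L,\mathsf{w}}^{(j)}(\cos\theta)|\le CL^{2j}(1+L\theta)^{-\nu-2}$, hence $\|\bfK_{L,\mathsf{w}}(\bx,\by)\|_2\le CL^2(1+L\theta)^{-\nu}$ for any $\nu>2$. With that bound, your Step 4 goes through unchanged.
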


The proof uses two auxiliary results. The first is a localization estimate for smooth spectral filters on the sphere. Such estimates are standard for scalar spherical harmonics; see, for example,
\cite[Lemma 3.3]{Brown_2005JFA_approximation},
\cite[Theorem 2.6.7]{Dai_2013book_approximation},
\cite[Theorem 4.1]{Mhaskar_2010MCoM_bernstein} and
\cite{Mhaskar_2005ACHA_representation}. We require the corresponding vector spherical harmonic estimate. The second result is a packing bound for separated points on $\bbS^2$. The proofs are provided in the Appendix.

\begin{lemma}\label{lem:filtered-localization}
Let $\mathsf{w}\in C^{\infty}([0,\infty))$ be supported in $(1/4,7/8)$, satisfy $0\le \mathsf{w}\le 1$, and $\mathsf{w}(x)=1$ on $[1/2,3/4]$. For an integer $L\ge 1$, define the filtered matrix-valued kernel
\[
\mathbf K_{L,\mathsf{w}}(\bx,\by)
:=
\sum_{\ell=1}^{\infty}
\mathsf{w}\!\left(\frac{\ell}{L}\right)
\sum_{k=1}^{2\ell+1}
\mathbf y_{\ell,k}(\bx)\mathbf y_{\ell,k}(\by)^{\top},
\quad \bx,\by\in\mathbb S^2.
\]
Also define
\[
\calB_{L,\mathsf{w}}(t):=
\sum_{\ell=1}^{\infty}\mathsf{w}\!\left(\frac{\ell}{L}\right)
\frac{2\ell+1}{4\pi\lambda_\ell}P_\ell(t).
\]
Then, for any $\nu>0$, there exists a constant
$C_{\nu,\mathsf{w}}>0$ such that, with
$\theta=\dist(\bx,\by)$,
\[
|\calB_{L,\mathsf{w}}^{(j)}(\cos\theta)|
\le C_{\nu,\mathsf{w}}L^{2j}(1+L\theta)^{-\nu-2},
\quad j=1,2,
\]
and
\[
\|\mathbf K_{L,\mathsf{w}}(\bx,\by)\|_2
\le C_{\nu,\mathsf{w}} L^2(1+L\theta)^{-\nu},
\]
where $\|\mathbf K_{L,\mathsf{w}}\|_2:=\max_{\bc\neq \mathbf{0}}\frac{\|\mathbf K_{L,\mathsf{w}} \bc\|_2}{\|\bc\|_2}$ denotes the induced $2$-norm of the matrix $\mathbf K_{L,\mathsf{w}}$.
\end{lemma}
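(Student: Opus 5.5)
We reduce the matrix bound to scalar derivative bounds via the vector addition formula, Lemma~\ref{lem:vec_addition}. Summing that formula against the filter gives
\[
\mathbf K_{L,\mathsf w}(\bx,\by)=\calB_{L,\mathsf w}'(t)\,\bfR(\bx,\by)+\calB_{L,\mathsf w}''(t)\,\bfQ(\bx,\by),\qquad t=\bx\cdot\by .
\]
This is exact, since the filter has compact support and all sums are finite. We have $\|\bfR\|_2\le 2$ and $\|\bfQ\|_2=\|\bx\times\by\|^2=\sin^2\theta\le\theta^2$. It therefore suffices to prove the two derivative estimates. With them,
\[
\|\mathbf K_{L,\mathsf w}\|_2\lesssim L^2(1+L\theta)^{-\nu-2}+\theta^2L^4(1+L\theta)^{-\nu-2}\lesssim L^2(1+L\theta)^{-\nu},
\]
using $L^2\theta^2\le(1+L\theta)^2$. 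This explains why the scalar bounds carry the extra factor $(1+L\theta)^{-2}$.

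For the derivative bounds we write $\calB_{L,\mathsf w}(t)=\sum_\ell g(\ell/L)\,\frac{2\ell+1}{4\pi}P_\ell(t)$ with the modified filter
\[
g(x):=\frac{\mathsf w(x)}{x(x+L^{-1})}\cdot L^{-2},
\]
so that $g(\ell/L)=\mathsf w(\ell/L)/\lambda_\ell$. On the support of $\mathsf w$, where $x\in(1/4,7/8)$, the function $L^2g$ is smooth with all derivatives bounded uniformly in $L\ge1$. Hence $\calB_{L,\mathsf w}=L^{-2}\,\Phi_L$, where $\Phi_L$ is a smoothly filtered scalar kernel with filter $h_L=L^2g$ having uniformly bounded $C^k$ norms and support in $(1/4,7/8)$.

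Next we invoke the standard scalar localization for derivatives of filtered Legendre sums, \cite[Theorem 2.6.7]{Dai_2013book_approximation} and \cite{Brown_2005JFA_approximation,Mhaskar_2010MCoM_bernstein}:
\[
\Bigl|\frac{d^j}{dt^j}\sum_\ell h\!\left(\frac{\ell}{L}\right)\frac{2\ell+1}{4\pi}P_\ell(\cos\theta)\Bigr|\le C_{k}\,L^{2+2j}(1+L\theta)^{-k}
\]
for every $k$, with $C_k$ depending only on finitely many derivative norms of $h$. This holds uniformly over our family $h_L$. Multiplying by $L^{-2}$ gives $L^{2j}(1+L\theta)^{-k}$, and we take $k=\nu+2$.

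The main obstacle is the derivative version near the antipode $\theta\approx\pi$. There, $d/dt$ is not comparable to $\theta$-derivatives, so the factor $(1-t^2)^{-1}$ singularity must be avoided. We would rely on the known result, which is proved via the Jacobi expansion $P_\ell^{(j)}\propto P^{(j,j)}_{\ell-j}$ and the standard Jacobi-kernel localization with $\alpha=\beta=j$. That result yields decay in $L\theta$ uniformly on $[0,\pi]$, since near $\pi$ one has $L\theta\approx L\pi$ and the Jacobi estimate gives $(1+L\theta)^{-k}$ decay there too. We would cite this result rather than reprove it, and we must check the uniformity of the constants in $L$ for the $L$-dependent filters $h_L$.
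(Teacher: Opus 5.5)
Your proposal is correct and follows essentially the same route as the paper. The vector addition formula reduces the matrix bound to $\calB_{L,\mathsf w}'$ and $\calB_{L,\mathsf w}''$, the identity $P_\ell^{(j)}=2^{-j}(\ell+1)_jP_{\ell-j}^{(j,j)}$ turns these into filtered Jacobi kernels with $\alpha=\beta=j$ (with $1/\lambda_\ell$ absorbed into a uniformly smooth filter), and the Dai--Xu localization then gives the bounds. The step you defer is exactly what the paper carries out: checking that $\frac{2\ell+1}{4\pi}2^{-j}(\ell+1)_j=c_jP_n^{(j,j)}(1)/\mathfrak h_n^{(j)}$ with $n=\ell-j$, and that the shifted filter $u\mapsto\mathsf w(u+j/L)/\bigl((u+j/L)(u+(j+1)/L)\bigr)$ is supported in $[1/8,1]$ with derivative bounds uniform in $L\ge 8(j+1)$.
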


\begin{lemma}\label{lem:packing-estimate}
Let $X=\{\bx_1,\ldots,\bx_N\}\subset \mathbb S^2$ have pairwise distances of at least $2q_X$. There exists a constant $C>0$ such that,
for any $i$ and integer $k\ge 1$,
$$\#\big\{
j:\; k q_X\le \dist(\bx_i,\bx_j)<(k+1)q_X
\big\}\le C(k+1).$$
Consequently, for any $\nu>2$, there exists a constant $C>0$,
independent of $L$, $N$, and $q_X$, such that, whenever $Lq_X\ge 1$,
$$
\sup_{1\le i\le N}\sum_{j\ne i}
\left(1+L\dist(\bx_i,\bx_j)\right)^{-\nu}\le C(Lq_X)^{-\nu}.
$$
\end{lemma}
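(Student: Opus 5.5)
The plan is a standard area (volume) comparison for the counting bound, followed by a dyadic-free shell summation for the second claim. Throughout write $q=q_X$ and $\mu$ for surface measure on $\bbS^2$. Since the pairwise geodesic distances are at least $2q$, the open geodesic caps $B(\bx_j,q)$ are pairwise disjoint. Also $2q\le\pi$, so $q\le\pi/2$.

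\textbf{Step 1 (counting in one shell).} Fix $i$ and $k\ge1$, and let $J_k$ be the set of indices $j$ with $kq\le\dist(\bx_i,\bx_j)<(k+1)q$. If $kq>\pi$, then $J_k$ is empty and there is nothing to prove. Otherwise, by the triangle inequality for the geodesic distance, each cap $B(\bx_j,q)$ with $j\in J_k$ lies in the annulus
\[
A_k:=\{\bx:\max(0,(k-1)q)\le\dist(\bx_i,\bx)<(k+2)q\}.
\]
The zonal area formula gives $\mu(A_k)=2\pi(\cos a-\cos b)$ with $a=\max(0,(k-1)q)$ and $b=\min(\pi,(k+2)q)$. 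By the mean value theorem this is at most $2\pi\,\sin\xi\,(b-a)$ for some $\xi\le b$, and $\sin\xi\le\xi$. Hence
\[
\mu(A_k)\le 2\pi\,(k+2)q\cdot 3q .
\]
On the other hand, each cap has area $2\pi(1-\cos q)\ge 4q^2/\pi$, using $1-\cos q\ge 2q^2/\pi^2$ for $0\le q\le\pi$. Disjointness of the caps then yields
\[
\#J_k\le \frac{6\pi(k+2)q^2}{4q^2/\pi}\le C(k+1),
\]
with $C$ an absolute constant. The only point needing care here is to use the spherical (not Euclidean) area formulas and to keep track that $q\le\pi/2$, so that the lower bound for the cap area holds uniformly.

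\textbf{Step 2 (the weighted sum).} Every $j\ne i$ satisfies $\dist(\bx_i,\bx_j)\ge 2q$, so each such $j$ lies in exactly one shell $J_k$ with $k\ge2$. On $J_k$ we have $1+L\dist(\bx_i,\bx_j)\ge Lkq$. Combining this with Step 1 gives
\[
\sum_{j\ne i}\bigl(1+L\dist(\bx_i,\bx_j)\bigr)^{-\nu}
\le\sum_{k\ge2}C(k+1)(Lkq)^{-\nu}
\le C(Lq)^{-\nu}\sum_{k\ge2}(k+1)k^{-\nu}.
\]
The last series converges precisely because $\nu>2$. This bound is uniform in $i$, $N$, $L$ and $q$. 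The hypothesis $Lq\ge1$ is not actually needed for this inequality; it only ensures that the right-hand side $C(Lq)^{-\nu}$ is small, which is how the bound is used in Theorem~\ref{thm:stability}.

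I do not expect a genuine obstacle. The main bookkeeping issue is the annulus-area estimate near the antipodal point, which is handled by truncating at $\pi$ and by the bound $\sin\xi\le\xi$.
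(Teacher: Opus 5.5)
Your proof is correct and follows the same route as the paper. You pack the disjoint caps $B(\bx_j,q_X)$ into the annulus $(k-1)q_X\le\dist(\bx_i,\cdot)<(k+2)q_X$ to get the shell count, then sum shell by shell against $(kLq_X)^{-\nu}$ using $\sum_k (k+1)k^{-\nu}<\infty$ for $\nu>2$. The differences are minor: you make the spherical area constants explicit, and you correctly observe that the hypothesis $Lq_X\ge 1$ is not needed for the inequality itself.
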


\begin{proof}[\textbf{Proof of Theorem \ref{thm:stability}}]
Let $\bc=(\bc_1,\ldots,\bc_N)^\top\in\mathbb R^{2N}$ be arbitrary, and set $\bfeta_j=\calP_j\bc_j\in T_{\bx_j}\bbS^2$. Then, $\sum_j\|\bfeta_j\|_2^2=\sum_j\|\bc_j\|_2^2$. The vector spherical harmonic expansion of $\bfK_{\dive}$ yields 
\begin{equation}\label{eq:stability-quad}
\bc^\top\calA_{\bfK_\dive,X}\bc
=\sum_{\ell=1}^{\infty}\kappa_\ell\sum_{k=1}^{2\ell+1}
\Big|\sum_{j=1}^N
\bfeta_j^\top \mathbf{y}_{\ell,k}(\bx_j)\Big|^2 .
\end{equation}
Since $\kappa_\ell>0$, truncation at any degree $L\ge1$ gives
$$
\bc^\top\calA_{\bfK_\dive,X}\bc
\ge\Big(\min_{1\le \ell\le L}\kappa_\ell\Big)
\sum_{\ell=1}^{L}\sum_{k=1}^{2\ell+1}
\Big|\sum_{j=1}^N
\bfeta_j^\top \mathbf{y}_{\ell,k}(\bx_j)\Big|^2.
$$

Let $\mathsf{w}$ be a smooth filter satisfying the assumptions in Lemma~\ref{lem:filtered-localization}. Since $\mathsf{w}(\ell/L)\le1$ for all $\ell$, it follows that
\begin{equation}\label{eq:Quadratic_form}
    \begin{aligned}
\sum_{\ell=1}^{L}\sum_{k=1}^{2\ell+1}
\Big|
\sum_{j=1}^{N}
\bfeta_j^{\top}\mathbf y_{\ell,k}(\bx_j)
\Big|^2\ge &~
\sum_{\ell=1}^{\infty}
\mathsf{w}\Big(\frac{\ell}{L}\Big)
\sum_{k=1}^{2\ell+1}
\Big|
\sum_{j=1}^{N}
\bfeta_j^{\top}\mathbf y_{\ell,k}(\bx_j)
\Big|^2\\
= &~
\sum_{i,j=1}^{N}
\bfeta_i^{\top}\mathbf K_{L,\mathsf{w}}(\bx_i,\bx_j)\bfeta_j .
\end{aligned}
\end{equation}
It remains to bound the resulting localized quadratic form from below.
We separate this form into its diagonal and off-diagonal parts. Setting $\by=\bx$ in \eqref{eq:K-Lw-from-H} gives
\[
\bfK_{L,\mathsf{w}}(\bx,\bx)=\mathsf{d}_L\bfR(\bx,\bx),\quad \mathsf{d}_L:=\frac{1}{8\pi}\sum_{\ell=1}^{\infty}
\mathsf{w}\left(\frac{\ell}{L}\right)(2\ell+1),
\]
where we used $P_\ell'(1)=\lambda_\ell/2$ and
$\bfQ(\bx,\bx)=0$. Since $\mathsf{w}(x)\equiv 1$ on $[1/2,3/4]$, there exist constants
$a_\mathsf{w}>0$ and $L_\mathsf{w}\ge1$, depending only on $\mathsf{w}$, such that
\[
\mathsf{d}_L \ge \frac{1}{8\pi}\sum_{\ell=\lceil L/2\rceil}^{\lfloor 3L/4\rfloor}(2\ell+1) \ge a_\mathsf{w} L^2, \quad \text{for all } L \ge L_\mathsf{w}.
\]
Moreover, $\bfR(\bx,\bx)=\bfI-\bx\bx^\top$ is the orthogonal
projection onto $T_{\bx}\bbS^2$. Hence
$\bfR(\bx_j,\bx_j)\bfeta_j=\bfeta_j$, and the diagonal summation satisfies
$$
\sum_{j=1}^{N}
\bfeta_j^{\top}\mathbf K_{L,\mathsf{w}}(\bx_j,\bx_j)\bfeta_j
=
\mathsf{d}_L\sum_{j=1}^{N}\|\bfeta_j\|_2^2
\ge
a_\mathsf{w} L^2
\|\bfeta\|_2^2.
$$

We next estimate the off-diagonal part. Fix $\nu>2$.
Lemmas~\ref{lem:filtered-localization} and
\ref{lem:packing-estimate} imply that, for some $C_{\nu,\mathsf{w}}>0$ and
whenever $Lq_X\ge1$,
$$
\sup_{1\le i\le N}
\sum_{j\ne i}
\left\|\mathbf K_{L,\mathsf{w}}(\bx_i,\bx_j)\right\|_{2}
\le
C_{\nu,\mathsf{w}} L^2 (Lq_X)^{-\nu}.
$$
Choose $\zeta_0\geq \pi L_\mathsf{w}/2$ sufficiently large so that
$C_{\nu,\mathsf{w}}\zeta_0^{-\nu}\le a_\mathsf{w}/2$. If $Lq_X\ge\zeta_0$, then
$L\ge \zeta_0/q_X\ge L_\mathsf{w}$ and
\[
\sup_{1\le i\le N} \sum_{\substack{j=1\\ j\ne i}}^N \left\|\mathbf K_{L,\mathsf{w}}(\bx_i,\bx_j)\right\|_{2} \le \frac{1}{2}a_\mathsf{w}L^2.
\]
The symmetry
$\bfK_{L,\mathsf{w}}(\bx,\by)=\bfK_{L,\mathsf{w}}(\by,\bx)^{\top}$ and the inequality $2ab\le a^2+b^2$ therefore give
$$
\begin{aligned}
\Big|
\sum_{i\ne j}
\bfeta_i^{\top}\bfK_{L,\mathsf{w}}(\bx_i,\bx_j)\bfeta_j
\Big|
&\le
\sum_{i\ne j}
\left\|\bfK_{L,\mathsf{w}}(\bx_i,\bx_j)\right\|_2
\|\bfeta_i\|_2\|\bfeta_j\|_2                                      \\
&\le
\sum_{i=1}^{N}
\|\bfeta_i\|_2^2
\sum_{j\ne i}
\left\|\bfK_{L,\mathsf{w}}(\bx_i,\bx_j)\right\|_2 \le
\frac{1}{2}a_\mathsf{w}L^2
\|\bfeta\|_2^2.
\end{aligned}
$$

Combining the diagonal and off-diagonal estimates yields
$$
\begin{aligned}
\sum_{i,j=1}^{N}
\bfeta_i^{\top}\bfK_{L,\mathsf{w}}(\bx_i,\bx_j)\bfeta_j
&\ge a_\mathsf{w} L^2
\|\bfeta\|_2^2
-\frac{a_\mathsf{w}}{2}L^2
\|\bfeta\|_2^2=\frac{1}{2}a_\mathsf{w}L^2
\|\bfeta\|_2^2.
\end{aligned}
$$
Substituting this lower bound into \eqref{eq:Quadratic_form} gives
$$
\bc^\top\calA_{\bfK_\dive,X}\bc
\ge
\frac{1}{2}a_\mathsf{w}L^2
\left(\min_{1\le \ell\le L}\kappa_\ell\right)
\|\bc\|_2^2 .
$$
Taking the infimum of the corresponding Rayleigh quotient over all nonzero coordinate vectors $\bc$ yields
\[
\lambda_{\min}\big(\mathcal A_{\mathbf K_{\mathrm{div}},X}\big)
\ge \frac{1}{2}a_\mathsf{w} L^2 \min_{1\le \ell\le L}\kappa_\ell,
\]
with $L=\lceil\zeta_0/q_X\rceil$. This completes the proof.
\end{proof}

\begin{corollary}\label{cor:smallest-eigenvalue-rate}
Under Assumption~\ref{ass:spectral-kernel}, there exists a constant $C>0$, independent of $q_X$, such that
$$ \lambda_{\min}(\calA_{\bfK_\dive, X}) \ge C q_X^{2\sigma-2}. $$
\end{corollary}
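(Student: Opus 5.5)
The plan is to plug the decay condition \eqref{eq:decay_condition_kernel} into the bound \eqref{eq:stability_bound} of Theorem~\ref{thm:stability}, and then convert the resulting power of $L$ into a power of $q_X$ via $L=\lceil\zeta_0/q_X\rceil$.

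\emph{Step 1: the minimum multiplier.} Under Assumption~\ref{ass:spectral-kernel} we have $\kappa_\ell\ge c_1(1+\lambda_\ell)^{-\sigma}$. Since $\sigma>1>0$ and $\lambda_\ell=\ell(\ell+1)$ is increasing, the lower bound is decreasing in $\ell$. Moreover $1+\lambda_\ell\le 3\ell^2$ for $\ell\ge1$. Hence
\[
\min_{1\le\ell\le L}\kappa_\ell\ge c_1(1+\lambda_L)^{-\sigma}\ge c_1 3^{-\sigma}L^{-2\sigma}.
\]
Theorem~\ref{thm:stability} then gives
\[
\lambda_{\min}(\calA_{\bfK_\dive,X})\ge C c_1 3^{-\sigma}L^{2-2\sigma}.
\]

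\emph{Step 2: relating $L$ to $q_X$.} Because $2-2\sigma<0$, we need an upper bound on $L$ in terms of $q_X^{-1}$. Separated nodes on the sphere have $q_X\le\pi/2$, since a pairwise geodesic distance is at most $\pi$. Therefore
\[
\zeta_0/q_X\ge 2\zeta_0/\pi,
\qquad
L\le \zeta_0/q_X+1\le\Bigl(1+\frac{\pi}{2\zeta_0}\Bigr)\frac{\zeta_0}{q_X}=:\frac{C'}{q_X}.
\]
It follows that
\[
L^{2-2\sigma}\ge (C')^{2-2\sigma}q_X^{2\sigma-2},
\]
and the constant is independent of $q_X$ and $N$ because $\zeta_0$ is.

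The only point needing care is the direction of the inequalities: the negative exponent requires an upper bound for $L$. The ceiling is handled by the trivial bound $q_X\le\pi/2$, so no case split on small $N$ is needed. If $N=1$, so that $q_X$ is undefined or conventionally infinite, the matrix is a single $2\times2$ block equal to $\calP_1^\top\bfK_\dive(\bx_1,\bx_1)\calP_1$, which is positive definite, and the statement is vacuous or trivially adjusted. I expect no real obstacle beyond this bookkeeping.
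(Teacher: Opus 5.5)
Your proposal is correct and follows the paper's proof: bound $\min_{1\le\ell\le L}\kappa_\ell\gtrsim L^{-2\sigma}$ from the decay condition, insert this into Theorem~\ref{thm:stability}, and use $q_X\le\pi/2$ to get $L=\lceil\zeta_0/q_X\rceil\le Cq_X^{-1}$, which reverses correctly because $2-2\sigma<0$. Your explicit constants ($1+\lambda_\ell\le 3\ell^2$ and $C'=\zeta_0+\pi/2$) make precise the steps the paper writes with $\asymp$.
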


\begin{proof}
Since $\lambda_\ell=\ell(\ell+1)\asymp\ell^2$, the decay condition implies
\[
\min_{1\le\ell\le L}\kappa_\ell\ge cL^{-2\sigma}.
\]
Choose $L=\lceil\zeta_0/q_X\rceil$ as in
Theorem~\ref{thm:stability}. Because $q_X\le\pi/2$, there exists a
constant $C$, independent of $X$, such that $L\le Cq_X^{-1}$.
Since $2-2\sigma<0$, Theorem~\ref{thm:stability} yields
\[
\lambda_{\min}(\calA_{\bfK_\dive,X})
\ge cL^{2-2\sigma}
\ge Cq_X^{2\sigma-2}.
\]
This completes the proof.
\end{proof}

\begin{corollary}
\label{corol:condition-number}
Under Assumption~\ref{ass:spectral-kernel}, we have
\[
\lambda_{\max}(\calA_{\bfK_\dive,X})
\le2N\gamma_{\bfK}\le Cq_X^{-2},
\]
where $\gamma_{\bfK}:=
\frac1{8\pi}\sum_{\ell=1}^\infty(2\ell+1)\kappa_\ell$. Moreover, the following estimate holds
\begin{equation}\label{eq:condition-number-bound}
\operatorname{cond}_2(\calA_{\bfK_\dive,X})
\le Cq_X^{-2\sigma}.
\end{equation}
Let
$\bgg=(\bgg_1^\top,\ldots,\bgg_N^\top)^\top$, where
$\bgg_j=\calP_j^\top\bff(\bx_j)$. For the linear system
$\calA_{\bfK_\dive,X}\bc=\bgg$, one also has
\begin{equation}\label{eq:coefficient-stability}
\|\bc\|_2\le Cq_X^{2-2\sigma}\|\bgg\|_2,
\quad
\|I_X\bff\|_{\mathcal N_{\bfK_\dive}}
\le Cq_X^{1-\sigma}\|\bgg\|_2.
\end{equation}
\end{corollary}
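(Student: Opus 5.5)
The plan is to first control $\lambda_{\max}$ through the diagonal, then combine it with Corollary~\ref{cor:smallest-eigenvalue-rate}, and finally use the Rayleigh quotient and the reproducing property for the coefficient and native-norm bounds.

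\textbf{Upper bound on $\lambda_{\max}$.} Since $\calA_{\bfK_\dive,X}$ is symmetric positive definite, its largest eigenvalue is at most its trace. Each diagonal block is $\calA_{jj}=\calP_j^\top\bfK_\dive(\bx_j,\bx_j)\calP_j$, so we need $\bfK_\dive(\bx,\bx)$. By Lemma~\ref{lem:vec_addition} with $\by=\bx$, together with $\bfQ(\bx,\bx)=0$ and $P_\ell'(1)=\lambda_\ell/2$, we get
\[
\mathcal S_\ell(\bx,\bx)=\frac{2\ell+1}{8\pi}\,\bfR(\bx,\bx)=\frac{2\ell+1}{8\pi}(\bfI-\bx\bx^\top).
\]
Hence $\bfK_\dive(\bx,\bx)=\gamma_{\bfK}(\bfI-\bx\bx^\top)$, and $\calA_{jj}=\gamma_{\bfK}\bfI_2$. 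The trace is then exactly $2N\gamma_{\bfK}$. The series defining $\gamma_{\bfK}$ converges because $\kappa_\ell\asymp\ell^{-2\sigma}$ with $\sigma>1$, so $\gamma_{\bfK}$ is a finite constant.

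It remains to show $N\le Cq_X^{-2}$. The geodesic caps of radius $q_X$ around the nodes are pairwise disjoint. Each has area $2\pi(1-\cos q_X)\ge c\,q_X^2$ for $q_X\le\pi/2$, and together they lie in $\bbS^2$ of area $4\pi$. Alternatively, this follows from Lemma~\ref{lem:packing-estimate} by summing the shells out to $k\approx\pi/q_X$.

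\textbf{Condition number.} We combine the bound above with Corollary~\ref{cor:smallest-eigenvalue-rate}:
\[
\operatorname{cond}_2=\frac{\lambda_{\max}}{\lambda_{\min}}\le \frac{Cq_X^{-2}}{Cq_X^{2\sigma-2}}=Cq_X^{-2\sigma}.
\]

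\textbf{Coefficient and native-norm bounds.} For the coefficients we use
\[
\|\bc\|_2\le\|\calA^{-1}\|_2\|\bgg\|_2=\lambda_{\min}^{-1}\|\bgg\|_2\le Cq_X^{2-2\sigma}\|\bgg\|_2.
\]
For the native norm, the reproducing property with $I_X\bff=\sum_j\bfK_\dive(\cdot,\bx_j)\calP_j\bc_j$ gives
\[
\|I_X\bff\|^2_{\calN_{\bfK_\dive}}=\bc^\top\calA\bc=\bgg^\top\calA^{-1}\bgg\le\lambda_{\min}^{-1}\|\bgg\|_2^2.
\]
Taking square roots yields $Cq_X^{1-\sigma}\|\bgg\|_2$.

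\textbf{Main obstacle.} The only nonroutine points are the exact diagonal identity, which hinges on evaluating the vector addition formula at $t=1$, and confirming that $N\lesssim q_X^{-2}$ with constants independent of $X$. Both are short once written down.
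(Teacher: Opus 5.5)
Your proposal is correct and follows the paper's proof essentially step for step. The shared steps are the diagonal identity $\bfK_\dive(\bx,\bx)=\gamma_{\bfK}(\bfI-\bx\bx^\top)$, which gives $\operatorname{trace}(\calA_{\bfK_\dive,X})=2N\gamma_{\bfK}$; the disjoint-cap count $N\le Cq_X^{-2}$; Corollary~\ref{cor:smallest-eigenvalue-rate} for $\lambda_{\min}$; and the identity $\|I_X\bff\|_{\mathcal N_{\bfK_\dive}}^2=\bc^\top\calA_{\bfK_\dive,X}\bc$. The only difference is cosmetic: in the last step the paper writes $\bc^\top\bgg\le\|\bc\|_2\|\bgg\|_2$ and reuses the coefficient bound, while your $\bgg^\top\calA_{\bfK_\dive,X}^{-1}\bgg\le\lambda_{\min}^{-1}\|\bgg\|_2^2$ is equivalent, and you also justify the diagonal identity and the finiteness of $\gamma_{\bfK}$, which the paper leaves implicit.
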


\begin{proof}
Since $\bfK_\dive(\bx,\bx)
=\gamma_{\bfK}(\bfI-\bx\bx^\top)$, we have
$\calP_i^\top\bfK_\dive(\bx_i,\bx_i)\calP_i=\gamma_{\bfK}\bfI_2$.
Because the interpolation matrix is positive definite,
\[
\lambda_{\max}(\calA_{\bfK_\dive,X})
\le\operatorname{trace}(\calA_{\bfK_\dive,X})
=2N\gamma_{\bfK}.
\]
The geodesic caps $B(\bx_j,q_X)$ are pairwise disjoint, and comparison with the area of $\mathbb S^2$ yields $N\le Cq_X^{-2}$. Combining this estimate with Corollary~\ref{cor:smallest-eigenvalue-rate} proves \eqref{eq:condition-number-bound}. The first bound in
\eqref{eq:coefficient-stability} follows from the inverse-matrix norm estimate. Finally,
\[
\|I_X\bff\|_{\mathcal N_{\bfK_\dive}}^2
=\bc^\top\calA_{\bfK_\dive,X}\bc
=\bc^\top\bgg
\le\|\bc\|_2\|\bgg\|_2,
\]
which proves the second bound.
\end{proof}

\subsection{Pointwise error estimates}
Interpolation error estimates are naturally derived first in the native space, where the appropriate tool is the \emph{power function}. For $\bxi\in T_{\bx}\bbS^2$, the reproducing property and native-space orthogonality yield
\begin{equation}\label{eq:pointwise-power-bound}
\left|\bxi^T\big(\bff(\bx)-I_X\bff(\bx)\big)\right|
\le
P_{\bfK_\dive,X,\bxi}(\bx)
\,
\|\bff\|_{\calN_{\bfK_\dive}},
\end{equation}
where the power function is defined by
$$P_{\bfK_\dive,X,\bxi}(\bx)
:=
\inf_{\bchi\in\calV_{X,\bfK_\dive}}
\|\bfK_\dive(\cdot,\bx)\bxi-\bchi\|_{\calN_{\bfK_\dive}}.$$
Thus, it suffices to estimate the power function.

For $L\ge1$, define the polynomial space
\[
\Pi_L^{\rm div}
:=\operatorname{span}
\{\mathbf y_{\ell,k}:1\le\ell\le L,\ 1\le k\le2\ell+1\}.
\]

\begin{lemma}\label{lem:power-function-bound}
	Let $X\subset\bbS^2$ be a finite set. Let $\bxi\in T_{\bx}\mathbb S^2$ with $\|\bxi\|_2=1$, $\bx\in\mathbb S^2$. Then, there exist constants $\zeta_0$, $C>0$, depending only on the sphere, such
	that for $h_X\le \zeta_0/L$,
	\begin{equation}\label{eq:power-function-bound}
		P_{\bfK_\dive,X,\bxi}(\bx)^2
		\le C\sum_{\ell>L}(2\ell+1)\kappa_\ell.
	\end{equation}
\end{lemma}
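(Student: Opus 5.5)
The plan is the standard power-function argument: a norming set, or equivalently a local polynomial reproduction, for $\Pi_L^{\rm div}$ on $X$, combined with the spectral form of the native norm. Write $\bfK_\dive(\cdot,\bx)\bxi = \bfK_L(\cdot,\bx)\bxi + \bfK_L^{\perp}(\cdot,\bx)\bxi$, where $\bfK_L$ keeps the terms with $\ell\le L$ and $\bfK_L^\perp$ keeps those with $\ell>L$. For the approximant we take
\[
\bchi:=\sum_j \bfK_\dive(\cdot,\bx_j)\,\bfeta_j ,
\]
where the tangent vectors $\bfeta_j\in T_{\bx_j}\bbS^2$ come from a local reproduction of the point-evaluation functional $\bp\mapsto \bxi^\top\bp(\bx)$ on $\Pi_L^{\rm div}$. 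That is, we require
\[
\bxi^\top\bp(\bx)=\sum_j \bfeta_j^\top \bp(\bx_j)\quad\text{for all } \bp\in\Pi_L^{\rm div},
\]
together with the bound $\sum_j\|\bfeta_j\|_2\le C$ (a $2$-norm bound suffices as well).

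The key step, and the main obstacle, is proving that such weights exist when $h_X\le\zeta_0/L$. By the norming set theorem (Jetter--St\"ockler--Ward / Wendland, Ch.~3), it suffices to show a Marcinkiewicz-type sampling inequality
\[
\|\bp\|_{L_\infty}\le 2\max_j\|\bp(\bx_j)\|_2, \qquad \bp\in\Pi_L^{\rm div}.
\]
This in turn follows from a Bernstein inequality $\|\nabla \bp\|_\infty\le C L\|\bp\|_\infty$, applied componentwise: each Cartesian component of $\bp$ is a scalar spherical polynomial of degree at most $L+1$, since $\bL_*Y_{\ell,k}$ is the restriction of a polynomial of degree $\ell$ (being $\bx\times\nabla Y$ for the homogeneous harmonic extension). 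Given Bernstein, take $\bx^*$ where $\|\bp\|_2$ attains its maximum. Some node satisfies $\dist(\bx^*,\bx_j)\le h_X$, so
\[
\|\bp(\bx^*)-\bp(\bx_j)\|_2\le C L h_X \|\bp\|_\infty\le \tfrac12\|\bp\|_\infty
\]
once $\zeta_0$ is chosen small enough. The functional $\bp\mapsto\bxi^\top\bp(\bx)$ has norm at most $1$ with respect to $\|\cdot\|_\infty$. Hahn--Banach on the sampled space, with the $\ell_1(\ell_2)$ dual norm, then yields $\bfeta_j$ with $\sum_j\|\bfeta_j\|_2\le 2$. Tangency of $\bfeta_j$ can be imposed by projecting with $\bfI-\bx_j\bx_j^\top$, which does not change $\bfeta_j^\top\bp(\bx_j)$ because $\bp$ is tangent.

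With the weights in hand, we compute $\|\bfK_\dive(\cdot,\bx)\bxi-\bchi\|^2_{\calN}$ spectrally. The Fourier coefficient against $\mathbf y_{\ell,k}$ equals $\kappa_\ell\bigl(\bxi^\top\mathbf y_{\ell,k}(\bx)-\sum_j\bfeta_j^\top\mathbf y_{\ell,k}(\bx_j)\bigr)$, so the native norm squared is
\[
\sum_{\ell}\kappa_\ell\sum_k\Bigl|\bxi^\top\mathbf y_{\ell,k}(\bx)-\sum_j\bfeta_j^\top\mathbf y_{\ell,k}(\bx_j)\Bigr|^2 .
\]
For $\ell\le L$ every term vanishes by reproduction. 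For $\ell>L$ we bound each summand by $2|\bxi^\top\mathbf y_{\ell,k}(\bx)|^2+2|\sum_j\cdots|^2$. The addition formula (Lemma~\ref{lem:vec_addition}) gives $\sum_k|\bxi^\top\mathbf y_{\ell,k}(\bx)|^2=\frac{2\ell+1}{8\pi}\|\bxi\|^2$, since $\mathcal S_\ell(\bx,\bx)=\frac{2\ell+1}{8\pi}(\bfI-\bx\bx^\top)$. For the second term, write $\sum_j\bfeta_j^\top\mathbf y_{\ell,k}(\bx_j)$ and expand $\sum_k|\cdot|^2=\sum_{i,j}\bfeta_i^\top\mathcal S_\ell(\bx_i,\bx_j)\bfeta_j$. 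Using $\|\mathcal S_\ell(\bx_i,\bx_j)\|_2\le C(2\ell+1)$, this is at most $C(2\ell+1)(\sum_j\|\bfeta_j\|_2)^2\le C(2\ell+1)$. Summing over $\ell>L$ gives \eqref{eq:power-function-bound}, with constants depending only on the sphere through the Bernstein constant and the choice of $\zeta_0$.
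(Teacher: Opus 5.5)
Your proposal is correct and follows the paper's proof essentially step for step. The chain is the same: a componentwise Bernstein inequality, then the norming inequality $\|\bp\|_{L^\infty}\le 2\max_j\|\bp(\bx_j)\|_2$ for $h_X\le\zeta_0/L$, then Hahn--Banach with the block $\ell_1(\ell_2)$ dual norm to get tangent weights with $\sum_j\|\bfeta_j\|_2\le 2$, and finally the spectral evaluation of $\|\bfK_\dive(\cdot,\bx)\bxi-\bchi\|_{\calN_{\bfK_\dive}}$, in which all modes $\ell\le L$ cancel.

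The differences are cosmetic. You bound the tail via $|a-b|^2\le 2|a|^2+2|b|^2$ and $\|\mathcal S_\ell(\bx_i,\bx_j)\|_2\le C(2\ell+1)$, where the paper uses Minkowski's inequality. Your degree count for the components uses $\bx\times\nabla$ of the homogeneous harmonic extension instead of the paper's infinitesimal-rotation argument; it actually gives degree at most $L$ rather than $L+1$, which is harmless either way.
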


\begin{proof}
	For a vector field
	$\bp=(p_1,p_2,p_3)^\top$, let
	$D_*\bp(\bz):T_{\bz}\bbS^2\rightarrow\mathbb R^3$
	denote its componentwise surface differential, and define
	\[
	\|\bp\|_{L^\infty}
	:=
	\sup_{\bz\in\bbS^2}\|\bp(\bz)\|_2,
	\quad
	\|D_*\bp\|_{L^\infty}
	:=
	\sup_{\bz\in\bbS^2}\|D_*\bp(\bz)\|_{\mathrm{op}}.
	\]
	
	For every $\bp\in\Pi_L^{\rm div}$ and every fixed
	$\mathbf a\in\mathbb R^3$, the scalar function
	$q_{\mathbf a}:=\mathbf a^\top\bp$ is a spherical polynomial of degree at most $L$.
	Indeed, for a vector spherical harmonic
	$\mathbf y_{\ell,k}=\lambda_\ell^{-1/2}\bL_*Y_{\ell,k}$,
	\[
	\mathbf a^\top\bL_*Y_{\ell,k}
	=
	(\mathbf a\times\bx)\cdot\nabla_*Y_{\ell,k},
	\]
	and the differential operator on the right is an infinitesimal rotation.
	It commutes with the scalar Laplace--Beltrami operator and therefore
	preserves the degree-$\ell$ eigenspace.
	Applying the scalar Bernstein inequality
	\cite{Dai_2013book_approximation,Mhaskar_2010MCoM_bernstein} to
	$q_{\mathbf a}$ gives, for every unit vector $\mathbf a$,
	\[
	\|\nabla_*q_{\mathbf a}\|_{L^\infty}
	\le
	C_B L\|q_{\mathbf a}\|_{L^\infty}
	\le
	C_B L\|\bp\|_{L^\infty}.
	\]
	Taking the supremum over the output direction $\mathbf a$ and over unit tangent
	directions yields
	\begin{equation}\label{eq:vector-bernstein}
		\|D_*\bp\|_{L^\infty}
		\le
		C_B L\|\bp\|_{L^\infty},
		\quad
		\bp\in\Pi_L^{\rm div},
	\end{equation}
	where $C_B$ depends only on the sphere.
	
	Choose $\zeta_0>0$ such that $C_B\zeta_0\le1/2$. Let $\bx_*\in\bbS^2$ satisfy
	$\|\bp(\bx_*)\|_2=\|\bp\|_{L^\infty}$.
	By the definition of the fill distance, there exists $\bx_j\in X$ such that
	$\operatorname{dist}(\bx_*,\bx_j)\le h_X$. Consequently,
	\[
	\begin{aligned}
		\|\bp\|_{L_\infty}
		&\le
		\|\bp(\bx_j)\|_2
		+\|\bp(\bx_*)-\bp(\bx_j)\|_2 \\
		&\le
		\max_{1\le i\le N}\|\bp(\bx_i)\|_2
		+C_BLh_X\|\bp\|_{L^\infty}.
	\end{aligned}
	\]
	If $h_X\le \zeta_0/L$, it follows that
	\begin{equation}\label{eq:vector-norming-set}
		\|\bp\|_{L^\infty}
		\le
		2\max_{1\le j\le N}\|\bp(\bx_j)\|_2,
		\quad
		\bp\in\Pi_L^{\rm div}.
	\end{equation}
	
	Equip
	$\mathcal Y_X
	:=
	\bigoplus_{j=1}^N T_{\bx_j}\bbS^2
	$
	with the block maximum norm
	$\|(\bv_1,\ldots,\bv_N)\|_{\infty,2}
	:=
	\max_{1\le j\le N}\|\bv_j\|_2$.
	The sampling operator
	\[
	E_X:\Pi_L^{\rm div}\longrightarrow\mathcal Y_X,
	\quad
	E_X\bp
	:=
	\bigl(\bp(\bx_1),\ldots,\bp(\bx_N)\bigr),
	\]
	is injective by \eqref{eq:vector-norming-set}. The functional
	$F(E_X\bp):=\bxi^\top\bp(\bx)$
	is therefore well defined on $\operatorname{ran}(E_X)$ and satisfies
	\[
	|F(E_X\bp)|
	\le
	\|\bp\|_{L^\infty}
	\le
	2\|E_X\bp\|_{\infty,2}.
	\]
	The Hahn--Banach theorem extends $F$ to $\mathcal Y_X$ without increasing
	its norm. The dual of the block maximum norm is the block $\ell_1$-norm,
	so the blockwise Riesz representation theorem gives vectors
	$\boldsymbol\eta_j\in T_{\bx_j}\bbS^2$ such that for every $\bp\in\Pi_L^{\rm div}$,
	\begin{equation}\label{eq:low-mode-reproduction}
		\bxi^\top\bp(\bx)
		=
		\sum_{j=1}^N
		\boldsymbol\eta_j^\top\bp(\bx_j),
		\quad
		\sum_{j=1}^N\|\boldsymbol\eta_j\|_2\le2.
	\end{equation}
	
	Lemma~\ref{lem:vec_addition} gives
	$\calS_\ell(\bz,\bz)
	=
	\frac{2\ell+1}{8\pi}
	\bigl(\bfI-\bz\bz^\top\bigr)$.
	Hence, by homogeneity, every
	$\boldsymbol\eta\in T_{\bz}\bbS^2$ satisfies
	\begin{equation}\label{eq:direct-vsh-diagonal}
		\sum_{k=1}^{2\ell+1}
		\left|
		\boldsymbol\eta^\top\mathbf y_{\ell,k}(\bz)
		\right|^2
		=
		\frac{2\ell+1}{8\pi}
		\|\boldsymbol\eta\|_2^2.
	\end{equation}
	
	Set
	$\bchi
	:=
	\sum_{j=1}^N
	\bfK_\dive(\cdot,\bx_j)\boldsymbol\eta_j
	\in\calV_{X,\bfK_\dive}$.
	Since $\mathbf y_{\ell,k}\in\Pi_L^{\rm div}$ whenever $\ell\le L$,
	\eqref{eq:low-mode-reproduction} cancels every mode of degree at most $L$.
	By the characterization of the power function as the distance from
	$\bfK_\dive(\cdot,\bx)\bxi$ to $\calV_{X,\bfK_\dive}$,
	\[
	\begin{aligned}
		P_{\bfK_\dive,X,\bxi}(\bx)^2
		\le
		\left\|
		\bfK_\dive(\cdot,\bx)\bxi-\bchi
		\right\|_{\mathcal N_{\bfK_\dive}}^2=
		\sum_{\ell>L}\kappa_\ell
		\sum_{k=1}^{2\ell+1}
		\big|
		\bxi^\top\mathbf y_{\ell,k}(\bx)
		-
		\sum_{j=1}^N
		\boldsymbol\eta_j^\top
		\mathbf y_{\ell,k}(\bx_j)
		\big|^2.
	\end{aligned}
	\]
	
	For each fixed $\ell$, Minkowski's inequality,
	\eqref{eq:low-mode-reproduction}, and
	\eqref{eq:direct-vsh-diagonal} yield
	\[
	\begin{aligned}
		&\Big(
		\sum_{k=1}^{2\ell+1}
		\Big|
		\bxi^\top\mathbf y_{\ell,k}(\bx)
		-
		\sum_{j=1}^N
		\boldsymbol\eta_j^\top\mathbf y_{\ell,k}(\bx_j)
		\Big|^2
		\Big)^{1/2}
		\\
		\le&~
		\Big(
		\sum_{k=1}^{2\ell+1}
		|\bxi^\top\mathbf y_{\ell,k}(\bx)|^2
		\Big)^{1/2}
		+
		\sum_{j=1}^N
		\Big(
		\sum_{k=1}^{2\ell+1}
		|\boldsymbol\eta_j^\top
		\mathbf y_{\ell,k}(\bx_j)|^2
		\Big)^{1/2}
		\\
		=&~
		\Big(
		1+\sum_{j=1}^N\|\boldsymbol\eta_j\|_2
		\Big)
		\Big(\frac{2\ell+1}{8\pi}\Big)^{1/2}
		\le
		3\Big(\frac{2\ell+1}{8\pi}\Big)^{1/2}.
	\end{aligned}
	\]
	Combining the above two estimates yields
	\[
	P_{\bfK_\dive,X,\bxi}(\bx)^2
	\le
	\frac{9}{8\pi}
	\sum_{\ell>L}(2\ell+1)\kappa_\ell.
	\]
	This completes the proof.
\end{proof}

\begin{theorem}
\label{thm:native-pointwise-error}
Under Assumption \ref{ass:spectral-kernel}, there exist $h_0,~C>0$ such
that, whenever $h_X\le h_0$ and
$\bff\in\mathcal N_{\bfK_\dive}$,
\[
\|\bff-I_X\bff\|_{\bL_\infty(\mathbb S^2)}
\le Ch_X^{\sigma-1}
\|\bff\|_{\mathcal N_{\bfK_\dive}}.
\]
\end{theorem}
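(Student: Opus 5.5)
The plan is to combine the pointwise power-function bound \eqref{eq:pointwise-power-bound} with Lemma~\ref{lem:power-function-bound}, choosing the degree $L$ in terms of $h_X$.

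\textbf{Choice of $L$.} Let $\zeta_0$ be the constant from Lemma~\ref{lem:power-function-bound}, and set $h_0:=\zeta_0$. For $h_X\le h_0$, define $L:=\lfloor \zeta_0/h_X\rfloor\ge1$. Then $h_X\le\zeta_0/L$, so the lemma applies. Moreover $L\ge \zeta_0/(2h_X)$, since $\lfloor s\rfloor\ge s/2$ for $s\ge1$.

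\textbf{Pointwise bound.} Fix $\bx\in\bbS^2$. Since $\bff-I_X\bff$ is tangential, $\|\bff(\bx)-I_X\bff(\bx)\|_2$ equals the supremum of $|\bxi^\top(\bff(\bx)-I_X\bff(\bx))|$ over unit vectors $\bxi\in T_{\bx}\bbS^2$. By \eqref{eq:pointwise-power-bound} and \eqref{eq:power-function-bound}, this supremum is at most
\[
C\Big(\sum_{\ell>L}(2\ell+1)\kappa_\ell\Big)^{1/2}\|\bff\|_{\calN_{\bfK_\dive}},
\]
uniformly in $\bx$ and $\bxi$.

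\textbf{Tail estimate.} The upper bound in \eqref{eq:decay_condition_kernel} gives $\kappa_\ell\le c_2(1+\lambda_\ell)^{-\sigma}\le c_2\ell^{-2\sigma}$. Hence
\[
\sum_{\ell>L}(2\ell+1)\kappa_\ell\le 3c_2\sum_{\ell>L}\ell^{1-2\sigma}\le C L^{2-2\sigma}.
\]
The integral comparison here is valid because $\sigma>1$ gives $1-2\sigma<-1$. Substituting $L\ge\zeta_0/(2h_X)$ yields $L^{2-2\sigma}\le C h_X^{2\sigma-2}$. Taking square roots gives the rate $h_X^{\sigma-1}$, and taking the supremum over $\bx$ completes the argument.

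\textbf{Where care is needed.} The real work, the norming set and the local reproduction of $\Pi_L^{\rm div}$, is already contained in Lemma~\ref{lem:power-function-bound}. What remains is bookkeeping:
\begin{itemize}
\item ensuring $L\ge1$, which is why the restriction $h_X\le h_0$ is imposed;
\item checking that the constant in the comparison $L\asymp 1/h_X$ does not depend on $X$.
\end{itemize}
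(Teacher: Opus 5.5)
Your proposal is correct and follows essentially the same route as the paper's proof. The paper likewise takes $L\ge1$ with $L\asymp h_X^{-1}$, applies Lemma~\ref{lem:power-function-bound} together with the upper decay bound to get $P^2\le C\sum_{\ell>L}\ell^{1-2\sigma}\le CL^{2-2\sigma}$, and then takes the supremum in \eqref{eq:pointwise-power-bound} over $\bx$ and unit tangent vectors; your explicit choices $h_0=\zeta_0$ and $L=\lfloor\zeta_0/h_X\rfloor$ simply make that bookkeeping concrete.
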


\begin{proof}
Choose $h_0$ so that $L\ge1$ and $L\asymp h_X^{-1}$. Applying Lemma
\ref{lem:power-function-bound} and the spectral decay assumption \eqref{eq:decay_condition_kernel}, we can obtain
\[
P_{\bfK_\dive,X,\bxi}(\bx)^2
\le C\sum_{\ell>L}\ell^{1-2\sigma}
\le CL^{2-2\sigma}.
\]
Combining this estimate with \eqref{eq:pointwise-power-bound} and taking the supremum over $\bx$ and all unit tangent vectors proves the result.
\end{proof}

\subsection{Sobolev error estimates}
\label{subsec:sobolev-error-estimates}

Classical Sobolev estimates for the $m=2$ surface kernel were established in \cite{Fuselier_2009MCoM_error,Fuselier_2009SINUM_stability}. Those estimates, however, were stated only for integer-order Sobolev spaces. The fractional-order sampling inequality in
\cite[Theorem~3.2]{arcangeli-2012NumerMath-extension}   allows us to extend the corresponding estimates to noninteger orders. 


\begin{lemma}
\label{lem:vector-zeros-estimate}
Let $\beta>1$ and $0\leq \alpha\leq\beta$. There exist constants
$h_0,C>0$ such that, whenever $h_X\leq h_0$ and
$\bff\in\bH^\beta(\bbS^2)$ satisfying
$\bff|_X=\bld{0}$, one has
\[
\|\bff\|_{\bH^\alpha(\bbS^2)}
\leq C h_X^{\beta-\alpha}
\|\bff\|_{\bH^\beta(\bbS^2)}.
\]
\end{lemma}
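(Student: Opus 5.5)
The plan is to reduce the vector-valued zeros estimate to the scalar fractional-order sampling inequality of \cite[Theorem~3.2]{arcangeli-2012NumerMath-extension}, applied componentwise to the ambient Cartesian components of $\bff$. Concretely, write $\bff=(f_1,f_2,f_3)^\top$ with $f_i:=\mathbf e_i^\top\bff$. First we establish the norm equivalence
\[
\|\bff\|_{\bH^s(\bbS^2)}\asymp \sum_{i=1}^3\|f_i\|_{H^s(\bbS^2)},\qquad s\ge 0,
\]
for tangential fields $\bff$. The intended route is to prove it for integer $s$ and then interpolate. For integer $s$, we use $\bff=\nabla_* g+\bL_* p$ with mean-zero potentials $g,p$. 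The vector Sobolev norm is then $\|g\|_{H^{s+1}}+\|p\|_{H^{s+1}}$ up to constants, because $\|\nabla_*Y_{\ell,k}\|^2=\lambda_\ell$. Each Cartesian component of $\nabla_*g$ and $\bL_*p$ is a first-order differential operator with smooth coefficients applied to $g$ or $p$. This gives the bound "$\lesssim$". For the reverse bound, $g$ and $p$ are recovered from $\dive_*\bff$ and $\curl_*\bff$, which are first-order expressions in the components $f_i$ (note $\dive_*\bff=\sum_i \nabla_* f_i\cdot \mathbf e_i$, etc.), followed by inverting $\Delta_*$ on mean-zero functions. Both the Hodge decomposition and the vector Sobolev scale form interpolation scales, via the spectral definitions. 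Complex interpolation therefore extends the equivalence to all real $s\ge0$.

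Second, if $\bff|_X=\mathbf 0$, then each $f_i$ vanishes on $X$. Since $\beta>1=d/2$ on the two-dimensional sphere, $H^\beta(\bbS^2)$ embeds continuously in $C(\bbS^2)$, so the point values make sense. The scalar sampling inequality of \cite{arcangeli-2012NumerMath-extension}, transferred to $\bbS^2$ through a finite atlas of smooth charts and a partition of unity, then yields
\[
\|f_i\|_{H^\alpha(\bbS^2)}\le C h_X^{\beta-\alpha}\|f_i\|_{H^\beta(\bbS^2)}
\]
for $h_X\le h_0$. The transfer is the standard argument used for spherical sampling inequalities: cover the sphere with caps, pull back to bounded Lipschitz domains satisfying an interior cone condition, note that the fill distance of the pulled-back set is comparable to $h_X$, and use that the spherical Sobolev norms (spectrally defined) are equivalent to chart-based norms. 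Summing over $i$ and applying the equivalence from the first step gives the claim.

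The main obstacle is the first step for noninteger orders, specifically confirming that the Cartesian-component norms and the spectrally defined $\bH^s$ norm coincide up to constants. This must hold with the spectral $H^s$ norm \eqref{eq:sobolev_norm} on components, and also in the chart/Sobolev–Slobodeckij form required by \cite{arcangeli-2012NumerMath-extension}. If the interpolation argument turns out to be delicate, the fallback is to verify the equivalence directly at integer orders $\lfloor\beta\rfloor$ and $\lceil\beta\rceil$ and interpolate only the linear map $\bff\mapsto(f_1,f_2,f_3)$ and its left inverse $(f_i)\mapsto \bP_{\bx}(f_i)$. Both maps are bounded at the integer endpoints, so boundedness at all intermediate orders follows from the interpolation property of the two scales.
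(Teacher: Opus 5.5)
Your architecture is the paper's. You transfer the scalar zeros estimate of \cite[Theorem~3.2]{arcangeli-2012NumerMath-extension} to $\bbS^2$ through charts, apply it to the Cartesian components $f_i$, and return to $\bH^\alpha$ via a componentwise norm equivalence. On that last point you do more than the paper, which only asserts the equivalence. Your Hodge-decomposition argument at integer orders, interpolated through the pair $\bff\mapsto(f_1,f_2,f_3)$ and $(f_i)\mapsto\bP_{\bx}(f_1,f_2,f_3)^\top$, is a sound justification.

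The gap is in your second step. There you assert that the transferred sampling inequality gives $\|f_i\|_{H^\alpha(\bbS^2)}\le Ch_X^{\beta-\alpha}\|f_i\|_{H^\beta(\bbS^2)}$ for every $0\le\alpha\le\beta$. The paper invokes the cited fractional result only for target orders $0\le s\le k=\lfloor\beta\rfloor$. You never check how far its admissible range of target orders reaches. So for $\lfloor\beta\rfloor<\alpha<\beta$ your inequality is not supplied by the citation.

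The paper closes this range with a separate interpolation step, and you need it too. Let $s_0<\beta$ be the top order the sampling inequality covers, and set $\vartheta=(\alpha-s_0)/(\beta-s_0)$. Hölder's inequality on the spectral sums then gives
\[
\|f\|_{H^\alpha(\bbS^2)}\le\|f\|_{H^{s_0}(\bbS^2)}^{1-\vartheta}\|f\|_{H^\beta(\bbS^2)}^{\vartheta}
\le C h_X^{(\beta-s_0)(1-\vartheta)}\|f\|_{H^\beta(\bbS^2)}=Ch_X^{\beta-\alpha}\|f\|_{H^\beta(\bbS^2)}.
\]
The paper uses the Gagliardo--Nirenberg inequality here. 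For the spectrally defined norms it is just this log-convexity. Because the argument works for any $s_0<\beta$, it does not depend on exactly where the cited range ends, including for integer $\beta$.

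With this step added, your proof is complete. It then coincides with the paper's, plus your extra justification of the norm equivalence.
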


\begin{proof}
Let $k=\lfloor\beta\rfloor$. Since $\beta>1$, point evaluation is
well defined on $H^\beta(\bbS^2)$. By mapping of bounded domains to $\bbR^3$ via charts \cite{adams_2003_sobolev,Fuselier_2009MCoM_error}, we can obtain the analogous scalar zeros estimate
\cite[Theorem~3.2]{arcangeli-2012NumerMath-extension} on the sphere,
for every $f\in H^\beta(\bbS^2)$ satisfying $f|_X=0$,
\begin{equation}\label{eq:zero-lower-range}
    \|f\|_{H^s(\bbS^2)}
\leq C h_X^{\beta-s}\|f\|_{H^\beta(\bbS^2)},
\quad 0\leq s\leq k.
\end{equation}
If $\beta$ is an integer, then $k=\beta$, and
the above inequality includes whole
range $0\leq\alpha\leq\beta$.

Suppose now that $\beta$ is not an integer, so that $k<\beta$, and let
$k<\alpha\leq\beta$. Define
\[
\alpha=(1-\vartheta)k+\vartheta\beta, \quad \vartheta=\frac{\alpha-k}{\beta-k}\in(0,1].
\]
Applying the Gagliardo-Nirenberg interpolation inequality \cite[Theorem~1]{brezis2018gagliardo}, we have
\[
\|f\|_{H^\alpha(\bbS^2)}
\leq C
\|f\|_{H^k(\bbS^2)}^{1-\vartheta}
\|f\|_{H^\beta(\bbS^2)}^\vartheta.
\]
Using \eqref{eq:zero-lower-range} with $s=k$, we obtain
\[
\begin{aligned}
\|f\|_{H^\alpha(\bbS^2)}
&\leq
C\left(
h_X^{\beta-k}\|f\|_{H^\beta(\bbS^2)}
\right)^{1-\vartheta}
\|f\|_{H^\beta(\bbS^2)}^\vartheta \\
&=
C h_X^{(\beta-k)(1-\vartheta)}
\|f\|_{H^\beta(\bbS^2)}\\
&= C h_X^{\beta-\alpha}
\|f\|_{H^\beta(\bbS^2)},
\end{aligned}
\]
since $(\beta-k)(1-\vartheta)=\beta-\alpha$.

Finally, applying this estimate to the Cartesian components of $\bff$ and using the equivalence between the componentwise Sobolev norm and the Sobolev norm for tangential vector fields completes the proof.
\end{proof}

\begin{lemma}
\label{lem:divergence-free-surrogate}
Let $1<\tau\le\sigma$, and let
$X=\{\bx_j\}_{j=1}^N$ consist of $N\ge2$ distinct nodes with
separation distance $q_X$.
For any $\bff\in\bH_\dive^\tau(\mathbb S^2)$, there exists
$L\le Cq_X^{-1}$ and $\bp\in\Pi_L^{\rm div}$ such that
\begin{equation}\label{eq:div-free-polynomial}
    \bp|_X=\bff|_X,
\quad
\|\bff-\bp\|_{\bH^\tau(\mathbb S^2)}
\le C\|\bff\|_{\bH^\tau(\mathbb S^2)},
\quad
\|\bp\|_{\bH^\sigma(\mathbb S^2)}
\le Cq_X^{\tau-\sigma}\|\bff\|_{\bH^\tau(\mathbb S^2)}.
\end{equation}
\end{lemma}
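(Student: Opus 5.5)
The plan is the standard band-limited interpolation construction of Narcowich--Ward type: interpolate the data with an auxiliary band-limited divergence-free kernel, then compare with a truncation of $\bff$. Fix a smooth filter $\mathsf{w}$ with $\mathsf{w}=1$ near $0$, support in $[0,1]$ and $0\le\mathsf{w}\le 1$. Set $L=\lceil\zeta_1/q_X\rceil$ with $\zeta_1$ large, and write
\[
\mathbf K_L(\bx,\by)=\sum_{\ell=1}^{L}\mathsf{w}(\ell/L)\,\ell^{-2\sigma'}\sum_k\mathbf y_{\ell,k}(\bx)\mathbf y_{\ell,k}(\by)^\top .
\]
We also write $T_L\bff$ for the filtered truncation $\sum_\ell \mathsf{w}(\ell/L)\sum_k\widehat{\bff}^{\rm div}_{\ell,k}\mathbf y_{\ell,k}$. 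Then we define
\[
\bp:=T_L\bff+\sum_j\mathbf K_L(\cdot,\bx_j)\bfeta_j,
\]
choosing the tangent vectors $\bfeta_j$ so that $\bp|_X=\bff|_X$. Then $\bp\in\Pi_L^{\rm div}$ and $L\le Cq_X^{-1}$ automatically. It remains to show that the tangent-frame Gram system $\calA_{\mathbf K_L,X}\bfeta=(\bff-T_L\bff)|_X$ is solvable with a good bound.

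Solvability with control comes from the proof of Theorem~\ref{thm:stability}. The diagonal blocks of $\mathbf K_L$ equal $\mathsf d_L(\bfI-\bx\bx^\top)$ with $\mathsf d_L\asymp L^{2-2\sigma'}$. The off-diagonal blocks are bounded by $CL^{2-2\sigma'}(1+L\theta)^{-\nu}$, via the localization estimate of Lemma~\ref{lem:filtered-localization} applied to the smooth multiplier $\mathsf{w}(x)x^{-2\sigma'}$ (choose $\sigma'$ small, e.g. $\sigma'=0$, so the multiplier stays smooth). With the packing estimate of Lemma~\ref{lem:packing-estimate} and $\zeta_1$ large, the matrix is diagonally dominant. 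Hence $\|\bfeta\|_2\le CL^{-2}\|\mathbf{e}\|_2$, where $\mathbf e=(\bff-T_L\bff)|_X$.

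Next we bound the norms. The quadratic-form identity \eqref{eq:stability-quad} for $\mathbf K_L$, together with the localized upper bound (Gershgorin again), gives
\[
\Big\|\sum_j\mathbf K_L(\cdot,\bx_j)\bfeta_j\Big\|_{\bH^s}^2\le CL^{2s}\sum_{\ell\le L}\sum_k\Big|\sum_j\bfeta_j^\top\mathbf y_{\ell,k}(\bx_j)\Big|^2\le CL^{2s+2}\|\bfeta\|_2^2\le CL^{2s-2}\|\mathbf e\|_2^2 .
\]
For $\|\mathbf e\|_2$ we need a sampling bound for the high-frequency remainder $\bff-T_L\bff$ at $q_X$-separated points: $\sum_j\|\mathbf g(\bx_j)\|^2\le Cq_X^{-2}\|\mathbf g\|_{\bH^\tau}^2L^{-2\tau+2}$-type. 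I would prove it via the Sobolev embedding on disjoint caps of radius $q_X$, using a local trace/embedding inequality $|\mathbf g(\bx_j)|^2\le C(q_X^{-2}\|\mathbf g\|_{L_2(B_j)}^2+q_X^{2\tau-2}|\mathbf g|_{H^\tau(B_j)}^2)$ (valid since $\tau>1$) summed over the caps, combined with $\|\bff-T_L\bff\|_{L_2}\le CL^{-\tau}\|\bff\|_{\bH^\tau}$. This yields $\|\mathbf e\|_2\le Cq_X^{\tau-1}\|\bff\|_{\bH^\tau}$, with fractional seminorms handled through charts as in Lemma~\ref{lem:vector-zeros-estimate}.

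Combining, the correction has $\bH^s$-norm at most $CL^{s-1}q_X^{\tau-1}\|\bff\|_{\bH^\tau}\le Cq_X^{\tau-s}\|\bff\|_{\bH^\tau}$. Taking $s=\tau$ and using $\|\bff-T_L\bff\|_{\bH^\tau}\le\|\bff\|_{\bH^\tau}$ gives the second estimate. Taking $s=\sigma$ together with $\|T_L\bff\|_{\bH^\sigma}\le CL^{\sigma-\tau}\|\bff\|_{\bH^\tau}$ gives the third. The main obstacle is the local sampling inequality for $\mathbf e$ with fractional $\tau$; the localization step is a rerun of the earlier stability argument.
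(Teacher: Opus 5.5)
\textbf{The gap.} The step that controls your Gram system does not hold as written.

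Lemma~\ref{lem:filtered-localization} assumes $\operatorname{supp}\mathsf w\subset(1/4,7/8)$, and this is essential. By Lemma~\ref{lem:vec_addition}, $\mathbf K_L=\calB_L''\bfQ+\calB_L'\bfR$ with $\calB_L(t)=\sum_{\ell\ge1}\mathsf w(\ell/L)\frac{2\ell+1}{4\pi\lambda_\ell}P_\ell(t)$. The hidden factor $\lambda_\ell^{-1}$ becomes the division by $(u+j/L)(u+(j+1)/L)$ in the appendix. That division is harmless only when $\mathsf w$ vanishes near $0$; if $\mathsf w(0)=1$, the functions $\chi_{L,j}$ have size $L^2$ near $u=0$. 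Taking $\sigma'=0$ therefore does not remove the problem.

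With $\mathsf w\equiv1$ near $0$, $\mathbf K_L$ is a mollified kernel of the orthogonal projection onto divergence-free fields, and that projection is nonlocal. Let $G(t)=\sum_{\ell\ge1}\frac{2\ell+1}{4\pi\lambda_\ell}P_\ell(t)=-\frac{1}{4\pi}\log(1-t)+\mathrm{const}$. For fixed $\bx\ne\pm\by$,
\[
\lim_{L\to\infty}\mathbf K_L(\bx,\by)=G''(t)\bfQ(\bx,\by)+G'(t)\bfR(\bx,\by)=\frac{1}{4\pi(1-t)}\bigl(\mathbf a_{\bx}\mathbf a_{\by}^\top-\mathbf u\mathbf u^\top\bigr),
\]
where $\mathbf u=(\bx\times\by)/\|\bx\times\by\|_2$, $\mathbf a_{\by}=(\bx-t\by)/\sin\theta$ and $\mathbf a_{\bx}=(t\bx-\by)/\sin\theta$. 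This limit has norm $(4\pi(1-\cos\theta))^{-1}\approx(2\pi\theta^2)^{-1}$; it is the spherical analogue of the planar Leray kernel. By contrast, $L^2(1+L\theta)^{-\nu}\to0$ for every $\nu>2$.

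So you only get $\nu=2$, which is exactly the case Lemma~\ref{lem:packing-estimate} excludes. For a quasi-uniform set, the off-diagonal Gershgorin row sums are of order $q_X^{-2}\log(1/q_X)$, while $\mathsf d_L\asymp\zeta_1^2q_X^{-2}$. Diagonal dominance then fails unless $\zeta_1\gtrsim\sqrt{\log(1/q_X)}$, and that breaks $L\le Cq_X^{-1}$. Choosing $\sigma'>0$ makes the tail heavier relative to $\mathsf d_L\asymp L^{2-2\sigma'}$, not lighter. The same objection applies to your ``Gershgorin again'' upper bound for $\sum_{\ell\le L}\sum_k|\sum_j\bfeta_j^\top\mathbf y_{\ell,k}(\bx_j)|^2$.

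\textbf{Repair and comparison.} Decouple the two filters. Keep a filter $\equiv1$ near $0$ only in $T_L\bff$, and build the correction from the paper's $\mathbf K_{L,\mathsf w}$ with $\operatorname{supp}\mathsf w\subset(1/4,7/8)$. The correction need not contain low modes, since $T_L\bff$ already carries them, and it still lies in $\Pi_L^{\rm div}$.

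The proof of Theorem~\ref{thm:stability} then gives $\bfeta^\top\calA_{\mathbf K_{L,\mathsf w},X}\bfeta\ge\frac12a_{\mathsf w}L^2\|\bfeta\|_2^2$ once $Lq_X\ge\zeta_0$. The same diagonal/off-diagonal split applied to $\mathbf K_{L,\mathsf w^2}$ gives the $O(L^2)$ upper bound you need for the $\bH^s$ norm of the correction.

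Alternatively, you can keep your kernel. Get the lower bound by filter monotonicity, as in \eqref{eq:Quadratic_form}. Get the upper bound from a scalar Marcinkiewicz--Zygmund inequality applied to the Cartesian components of $\bp$, which are scalar polynomials of degree at most $L$.

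With either change, your remaining steps go through: the bound $\|\mathbf e\|_2\le Cq_X^{\tau-1}\|\bff\|_{\bH^\tau}$ from the scaled local embedding summed over disjoint caps, and the bookkeeping $L^{s-1}q_X^{\tau-1}\lesssim q_X^{\tau-s}$ for $s=\tau,\sigma$.

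The paper instead takes the interpolating $\bp\in\Pi_L^{\rm div}$ directly from \cite[Theorem~4.1]{Fuselier_2009MCoM_error}. It then finishes with the spectral Bernstein bound $\|\bp\|_{\bH^\sigma}\le C(1+L)^{\sigma-\tau}\|\bp\|_{\bH^\tau}$. Your repaired route is a self-contained version of that cited result, built from the paper's own localization and stability tools. The price is that you must prove a local fractional-order sampling inequality yourself.
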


\begin{proof}
Choose
$L=\lceil\zeta_\tau/q_X\rceil$, where $\zeta_\tau$ is the constant in
\cite[Theorem~4.1]{Fuselier_2009MCoM_error}. That theorem, which is
stated for any real $\tau>1$, provides
$\bp\in\Pi_L^{\rm div}$ with $\bp|_X=\bff|_X$ and
\[
\|\bff-\bp\|_{\bH^\tau}
\le C\operatorname{dist}_{\bH^\tau}
(\bff,\Pi_L^{\rm div})
\le C\|\bff\|_{\bH^\tau}.
\]
The triangle inequality therefore gives
$\|\bp\|_{\bH^\tau}\le C\|\bff\|_{\bH^\tau}$. Since
$\bp\in\Pi_L^{\rm div}$ and $\sigma\ge\tau$, the spectral definition
gives directly
\[
\begin{aligned}
\|\bp\|_{\bH^\sigma}^2
&=\sum_{\ell\le L}\sum_{k=1}^{2\ell+1}
(1+\lambda_\ell)^\sigma|\widehat\bp_{\ell,k}|^2\\
&\le(1+\lambda_L)^{\sigma-\tau}
\sum_{\ell\le L}\sum_{k=1}^{2\ell+1}
(1+\lambda_\ell)^\tau|\widehat\bp_{\ell,k}|^2\\
&=(1+\lambda_L)^{\sigma-\tau}\|\bp\|_{\bH^\tau}^2.
\end{aligned}
\]
Because $1+\lambda_L\asymp(1+L)^2$, taking square roots yields
\[
\|\bp\|_{\bH^\sigma}
\le C(1+L)^{\sigma-\tau}\|\bp\|_{\bH^\tau}
\le Cq_X^{\tau-\sigma}\|\bff\|_{\bH^\tau}.
\]
This completes the proof.
\end{proof}

\begin{theorem}\label{thm:Outside_native_space}
Let $\bff\in\bH_\dive^\tau(\mathbb S^2)$ and $\bfK_\dive$ satisfy Assumption~\ref{ass:spectral-kernel} with
$1<\tau\le\sigma$. Let $X=\{\bx_j\}_{j=1}^N\subset\mathbb S^2$ be a
set of $N$ distinct points with $h_X\le h_0$, separation distance $q_X$,
and mesh ratio $\rho_X=h_X/q_X$. Then, for any real
$0\le s\le\tau$, we have
\[
\|\bff-I_X\bff\|_{\bH^{s}(\bbS^2)}\leq C \rho_X^{\sigma-\tau} h_X^{\tau-s}\|\bff\|_{\bH^{\tau}(\bbS^2)}.
\]
\end{theorem}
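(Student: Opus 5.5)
We follow the standard ``escaping the native space'' argument of Narcowich--Ward--Wendland, adapted to the divergence-free setting through Lemma~\ref{lem:divergence-free-surrogate}. Given $\bff\in\bH_\dive^\tau(\bbS^2)$, we take the band-limited surrogate $\bp\in\Pi_L^{\rm div}$ with $L\le Cq_X^{-1}$ from that lemma, so that $\bp|_X=\bff|_X$. Since interpolation depends only on the data at $X$, we have $I_X\bff=I_X\bp$. This gives the splitting
\[
\bff-I_X\bff=(\bff-\bp)+(\bp-I_X\bp).
\]
Both pieces vanish on $X$, and each will be handled with the zeros estimate of Lemma~\ref{lem:vector-zeros-estimate}.

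\emph{First term.} The field $\bff-\bp\in\bH^\tau$ vanishes on $X$, and $\tau>1$. Lemma~\ref{lem:vector-zeros-estimate} with $\beta=\tau$ and $\alpha=s$ together with the first bound in \eqref{eq:div-free-polynomial} give
\[
\|\bff-\bp\|_{\bH^s}\le Ch_X^{\tau-s}\|\bff-\bp\|_{\bH^\tau}\le Ch_X^{\tau-s}\|\bff\|_{\bH^\tau}.
\]
Since $\rho_X\ge c>0$ on the sphere and $\sigma-\tau\ge0$, this is dominated by the claimed right-hand side.

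\emph{Second term.} Here $\bp\in\Pi_L^{\rm div}\subset\bH_\dive^\sigma\simeq\calN_{\bfK_\dive}$ by Assumption~\ref{ass:spectral-kernel}. The field $\bp-I_X\bp$ lies in $\bH^\sigma$ and vanishes on $X$. Applying Lemma~\ref{lem:vector-zeros-estimate} with $\beta=\sigma$ and $\alpha=s\le\sigma$, then the norm equivalence and the projection property of Proposition~\ref{prop:interpolation-projection} (which gives $\|\bp-I_X\bp\|_{\calN}\le\|\bp\|_{\calN}$), we obtain
\[
\|\bp-I_X\bp\|_{\bH^s}\le Ch_X^{\sigma-s}\|\bp-I_X\bp\|_{\bH^\sigma}\le Ch_X^{\sigma-s}\|\bp\|_{\bH^\sigma}.
\]
The third bound in \eqref{eq:div-free-polynomial} then yields
\[
\|\bp-I_X\bp\|_{\bH^s}\le Ch_X^{\sigma-s}q_X^{\tau-\sigma}\|\bff\|_{\bH^\tau}=C\rho_X^{\sigma-\tau}h_X^{\tau-s}\|\bff\|_{\bH^\tau}.
\]
Adding the two terms proves the theorem.

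The main technical point is the legitimacy of the zeros lemma for fields vanishing on $X$, where $h_X\le h_0$ must be compatible with the lemma's threshold; we simply take $h_0$ to be the minimum of the thresholds involved. The other point to check is that the surrogate of Lemma~\ref{lem:divergence-free-surrogate}, which rests on \cite[Theorem~4.1]{Fuselier_2009MCoM_error}, is divergence-free, so that $\bp$ lies in the native space. It does, because $\Pi_L^{\rm div}$ consists of divergence-free fields. All remaining steps are norm-equivalence bookkeeping, with constants independent of $X$.
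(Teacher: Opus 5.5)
Your proof is correct and is essentially the paper's argument. It uses the same surrogate $\bp$, the identity $I_X\bff=I_X\bp$, the zeros lemma, the native-space projection property, and the bound $\|\bp\|_{\bH^\sigma}\le Cq_X^{\tau-\sigma}\|\bff\|_{\bH^\tau}$. The only difference is bookkeeping: you apply Lemma~\ref{lem:vector-zeros-estimate} to each piece directly at order $s$, whereas the paper first bounds $\|\bff-I_X\bff\|_{\bH^\tau}$ by the triangle inequality and then applies the zeros lemma once from $\tau$ down to $s$; both versions absorb the $\bff-\bp$ contribution using $\rho_X\ge 1$.
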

\begin{proof}
Lemma~\ref{lem:divergence-free-surrogate} provides a divergence-free
spherical polynomial $\bp$ of degree $L\lesssim q_X^{-1}$ satisfying \eqref{eq:div-free-polynomial}.
Because $\bp$ and $\bff$ have the same values on $X$,
$I_X\bp=I_X\bff$. Since $(\bff-I_X\bff)|_X=0$,
Lemma~\ref{lem:vector-zeros-estimate} gives
\begin{equation}\label{eq:zeros-estimate1}
	\|\bff-I_X\bff\|_{\bH^s}
	\le Ch_X^{\tau-s}\|\bff-I_X\bff\|_{\bH^\tau}.
\end{equation}

Moreover, by the triangle inequality, we have
\[
\|\bff-I_X\bff\|_{\bH^\tau}
\le\|\bff-\bp\|_{\bH^\tau}
+\|\bp-I_X\bp\|_{\bH^\tau}.
\]
Applying the zeros estimate to the second term, using
$\calN_{\bfK_\dive}\simeq\bH_\dive^\sigma$, and using the
native-space best-approximation property, we obtain
\[
\begin{aligned}
\|\bp-I_X\bp\|_{\bH^\tau}
&\le Ch_X^{\sigma-\tau}
\|\bp-I_X\bp\|_{\bH^\sigma}\\
&\le Ch_X^{\sigma-\tau}
\|\bp-I_X\bp\|_{\calN_{\bfK_\dive}}\\
&\le Ch_X^{\sigma-\tau}\|\bp\|_{\calN_{\bfK_\dive}}\\
&\le Ch_X^{\sigma-\tau}q_X^{\tau-\sigma}
\|\bff\|_{\bH^\tau}
=C\rho_X^{\sigma-\tau}\|\bff\|_{\bH^\tau}.
\end{aligned}
\]
Combining this with \eqref{eq:div-free-polynomial} implies
$\|\bff-I_X\bff\|_{\bH^\tau}\le
C\rho_X^{\sigma-\tau}\|\bff\|_{\bH^\tau}$. Substituting it into \eqref{eq:zeros-estimate1} completes the proof.
\end{proof}

\subsection{Superconvergence}
We next derive a Sobolev-scale error estimate from the preceding estimate. The proof is based on a standard interpolation argument for Hilbert scales. We include
the details to show that the divergence-free constraint causes no additional difficulty. 

For $\vartheta\geq 0$, we define
\begin{equation}\label{eq:PowerSpace}
    \calF^{\vartheta}_\dive(\bbS^2):=
    \left\{
    \bff \in \bH_{\dive}^{0}(\bbS^2)
    :\sum_{\ell=1}^{\infty}\sum_{k=1}^{2\ell+1}
    \kappa_\ell^{-\vartheta}
    |\widehat{\bff}_{\ell,k}|^2 < \infty
    \right\}.
\end{equation}
Then, $\calF^{0}_\dive(\bbS^2)=\bH_\dive^0(\bbS^2)$ and $\calF^{1}_\dive(\bbS^2)=\calN_{\bfK_\dive}$ with the exact native inner product. Moreover, $\kappa_\ell\asymp (1+\lambda_\ell)^{-\sigma}$ implies $\calF^\vartheta_\dive(\bbS^2)\simeq \bH_{\dive}^{\vartheta\sigma}(\bbS^2)$. 

The following lemma turns an $\bL_2$-error estimate into an estimate in the stronger norm $\calF^{1}_\dive(\bbS^2)$, provided the target function has
additional smoothness \cite[Corollary 15]{Karvonen_2025_general}.

\begin{lemma}\label{lem:hilbert-scale-superconvergence}
Let $\varepsilon>0$, and let
$\calQ:\calF^{1}_\dive(\bbS^2)\to \calF^{1}_\dive(\bbS^2)$
be an $\calF^{1}_\dive(\bbS^2)$-orthogonal projection. Suppose that
\begin{equation}\label{eq:Hscale-L2error}
\|\bff-\calQ\bff\|_{\calF^{0}_\dive(\bbS^2)}
\le
\varepsilon \|\bff\|_{\calF^{1}_\dive(\bbS^2)},
\quad \bff\in\calF^{1}_\dive(\bbS^2).
\end{equation}
Then, for any $0\le \vartheta\le 1$ and 
$\bgg\in \calF^{1+\vartheta}_\dive(\bbS^2)$, we have
\begin{equation}\label{eq:Hilbert-superconver2}
\|\bgg-\calQ\bgg\|_{\calF^{1}_\dive(\bbS^2)}
\le
\varepsilon^{\vartheta}
\|\bgg\|_{\calF^{1+\vartheta}_\dive(\bbS^2)} .
\end{equation}
\end{lemma}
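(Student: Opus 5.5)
The plan is a duality argument for the Hilbert scale $\calF^\vartheta_\dive$ generated by the diagonal operator $T:\widehat{\bff}_{\ell,k}\mapsto\kappa_\ell\widehat{\bff}_{\ell,k}$. By \eqref{eq:PowerSpace}, $\|\bff\|_{\calF^\vartheta}^2=\sum\kappa_\ell^{-\vartheta}|\widehat{\bff}_{\ell,k}|^2$. Moreover, $\langle \bff,\bgg\rangle_{\calF^1}=\langle \bff,T^{-1}\bgg\rangle_{\calF^0}$ whenever $\bgg\in\calF^2_\dive$. The proof treats $\vartheta=1$ first and then fills in $0<\vartheta<1$ by interpolation; the case $\vartheta=0$ is trivial, since $\|\bgg-\calQ\bgg\|_{\calF^1}\le\|\bgg\|_{\calF^1}$ for an orthogonal projection.

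For $\vartheta=1$, let $\bgg\in\calF^2_\dive$ and $\bfe:=\bgg-\calQ\bgg$. Orthogonality gives $\bfe\perp_{\calF^1}\operatorname{ran}\calQ$, so
\[
\|\bfe\|_{\calF^1}^2=\langle \bfe,\bgg\rangle_{\calF^1}=\langle \bfe,T^{-1}\bgg\rangle_{\calF^0}\le \|\bfe\|_{\calF^0}\,\|T^{-1}\bgg\|_{\calF^0}=\|\bfe\|_{\calF^0}\|\bgg\|_{\calF^2}.
\]
Since $\calQ$ is idempotent, $\bfe=\bfe-\calQ\bfe$. Applying \eqref{eq:Hscale-L2error} to $\bfe$ yields $\|\bfe\|_{\calF^0}\le\varepsilon\|\bfe\|_{\calF^1}$. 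Dividing through gives $\|\bfe\|_{\calF^1}\le\varepsilon\|\bgg\|_{\calF^2}$.

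For intermediate $\vartheta$, the operator $I-\calQ$ is linear, has norm at most $1$ from $\calF^1$ to $\calF^1$, and has norm at most $\varepsilon$ from $\calF^2$ to $\calF^1$. The spaces $\calF^{1+\vartheta}$ form an exact Hilbert-scale interpolation family, being the domains of powers of the positive self-adjoint operator $T^{-1}$. Exact interpolation of exponent $\vartheta$ then gives operator norm at most $1^{1-\vartheta}\varepsilon^\vartheta$.

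To keep the constant exactly $1$ and stay self-contained, I would avoid quoting abstract interpolation theory and argue directly. Repeat the duality step with $\langle \bfe,\bgg\rangle_{\calF^1}=\langle T^{-\vartheta/2}\bfe,T^{-(1+\vartheta)/2}\bgg\rangle_{\calF^0}$, which gives $\|\bfe\|_{\calF^1}^2\le\|\bfe\|_{\calF^{\vartheta}}\|\bgg\|_{\calF^{1+\vartheta}}$. Then use the spectral Hölder inequality $\|\bfe\|_{\calF^\vartheta}\le\|\bfe\|_{\calF^0}^{1-\vartheta}\|\bfe\|_{\calF^1}^{\vartheta}$, which follows from Hölder with exponents $1/(1-\vartheta)$ and $1/\vartheta$ applied to the coefficient sums. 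Combining this with $\|\bfe\|_{\calF^0}\le\varepsilon\|\bfe\|_{\calF^1}$ gives
\[
\|\bfe\|_{\calF^1}^2\le \varepsilon^{1-\vartheta}\|\bfe\|_{\calF^1}\|\bgg\|_{\calF^{1+\vartheta}},
\]
hence $\|\bfe\|_{\calF^1}\le\varepsilon^{1-\vartheta}\|\bgg\|_{\calF^{1+\vartheta}}$.

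This exponent comes out as $1-\vartheta$ rather than $\vartheta$, so the conversion needs a second look. The fix is to apply the duality step once more to control $\|\bfe\|_{\calF^\vartheta}$ by $\|\bfe\|_{\calF^0}$ and $\|\bfe\|_{\calF^1}$ in the opposite direction.

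The cleaner alternative is to take the $\vartheta=1$ bound $\|\bfe\|_{\calF^1}\le\varepsilon\|\bgg\|_{\calF^2}$ as the anchor and interpolate it against the trivial bound. Write $\bgg=\bgg_{\le}+\bgg_{>}$, split by the spectral threshold $\kappa_\ell\ge\varepsilon^2$. This is a K-functional argument: $\|\bgg_\le\|_{\calF^2}\le \varepsilon^{\vartheta-1}\|\bgg\|_{\calF^{1+\vartheta}}$ and $\|\bgg_>\|_{\calF^1}\le\varepsilon^{\vartheta}\|\bgg\|_{\calF^{1+\vartheta}}$. It yields $\varepsilon^\vartheta$, but only up to a factor of $2$.

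The main obstacle is therefore obtaining the sharp constant $1$. I expect to get it by the direct spectral-calculus inequality in the duality form, choosing the Hölder split correctly. If that fails, I would fall back on exact Hilbert-scale interpolation, which has constant $1$ for exponent-$\vartheta$ interpolation of Hilbert spaces (McCarthy/Heinz).
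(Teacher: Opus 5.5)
Your main line matches the paper's proof and is correct with constant $1$. It has three steps: $\vartheta=0$ by contractivity of the orthogonal projection; $\vartheta=1$ via $\|\bfe\|_{\calF^{1}_{\dive}}^2=\langle\bfe,\bgg\rangle_{\calF^{1}_{\dive}}\le\|\bfe\|_{\calF^{0}_{\dive}}\|\bgg\|_{\calF^{2}_{\dive}}$ together with $\bfe=\bfe-\calQ\bfe$; and intermediate $\vartheta$ by exact exponent-$\vartheta$ interpolation of $I-\calQ$ between the norm bound $1$ on $\calF^{1}_{\dive}$ and $\varepsilon$ from $\calF^{2}_{\dive}$.

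The ``direct'' detour, which you say you would prefer, fails for a reason other than the one you suspect. The identity $\langle\bfe,\bgg\rangle_{\calF^{1}_{\dive}}=\langle T^{-\vartheta/2}\bfe,T^{-(1+\vartheta)/2}\bgg\rangle_{\calF^{0}_{\dive}}$ is false. Its right-hand side carries the weight $\kappa_\ell^{-(1+2\vartheta)/2}$, not $\kappa_\ell^{-1}$. The two agree only at $\vartheta=1/2$, which is exactly where your wrong exponent $1-\vartheta$ coincides with $\vartheta$. Re-choosing the H\"older split or adding a second duality step cannot repair this; the pairing itself must be rebalanced.

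The exponents on the two factors must sum to $1$, so pair $T^{-(1-\vartheta)/2}\bfe$ with $T^{-(1+\vartheta)/2}\bgg$:
\[
\|\bfe\|_{\calF^{1}_{\dive}}^2
=\langle\bfe,\bgg\rangle_{\calF^{1}_{\dive}}
=\bigl\langle T^{-(1-\vartheta)/2}\bfe,\,T^{-(1+\vartheta)/2}\bgg\bigr\rangle_{\calF^{0}_{\dive}}
\le\|\bfe\|_{\calF^{1-\vartheta}_{\dive}}\,\|\bgg\|_{\calF^{1+\vartheta}_{\dive}}.
\]
For $0<\vartheta<1$, apply H\"older with exponents $1/\vartheta$ and $1/(1-\vartheta)$ to $\kappa_\ell^{-(1-\vartheta)}|\widehat{\bfe}_{\ell,k}|^2=|\widehat{\bfe}_{\ell,k}|^{2\vartheta}\bigl(\kappa_\ell^{-1}|\widehat{\bfe}_{\ell,k}|^2\bigr)^{1-\vartheta}$. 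This gives $\|\bfe\|_{\calF^{1-\vartheta}_{\dive}}\le\|\bfe\|_{\calF^{0}_{\dive}}^{\vartheta}\|\bfe\|_{\calF^{1}_{\dive}}^{1-\vartheta}\le\varepsilon^{\vartheta}\|\bfe\|_{\calF^{1}_{\dive}}$, using your observation $\|\bfe\|_{\calF^{0}_{\dive}}\le\varepsilon\|\bfe\|_{\calF^{1}_{\dive}}$. Dividing by $\|\bfe\|_{\calF^{1}_{\dive}}$ yields the lemma with constant exactly $1$.

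This repaired argument is a genuinely more elementary route than the paper's. It covers all $\vartheta\in[0,1]$ at once and reduces to the paper's computation at $\vartheta=1$. It uses only Cauchy--Schwarz and H\"older on coefficient sequences. It does not need the complex-interpolation theorem, nor the isometric identification $[\calF^{1}_{\dive},\calF^{2}_{\dive}]_\vartheta=\calF^{1+\vartheta}_{\dive}$ that the paper invokes to keep the constant sharp. Your spectral-cutoff (K-functional) split is valid but gives a constant larger than $1$, so it does not prove the statement as written.
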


\begin{proof}
Let $\calE=I-\mathcal Q$. The case $\vartheta=0$ follows from the contractivity of this orthogonal projection. To prove the case $\vartheta=1$, define the positive self-adjoint operator
\[
\calA\mathbf y_{\ell,k}=\kappa_\ell^{-1}\mathbf y_{\ell,k}.
\]
Then $\|\bgg\|_{\calF_{\dive}^{\,1}}^2=(\calA \bgg,\bgg)_{\calF_{\dive}^{\,0}}$ and
$\|\calA\bgg\|_{\calF_{\dive}^{\,0}}
=\|\bgg\|_{\calF_{\dive}^{\,2}}$. For
$\bgg\in\calF_{\dive}^{\,2}$, orthogonality gives
\[
\begin{aligned}
\|\calE\bgg\|_{\calF_{\dive}^{\,1}}^2
=\langle\calE\bgg,\bgg\rangle_{\calF_{\dive}^{\,1}}=\langle\calE\bgg,\calA\bgg\rangle_{\calF_{\dive}^{\,0}}\le\|\calE\bgg\|_{\calF_{\dive}^{\,0}}
\|\bgg\|_{\calF_{\dive}^{\,2}}.
\end{aligned}
\]
Since $\calE^2=\calE$, the assumed estimate \eqref{eq:Hscale-L2error} applied to
$\calE\bgg$ yields
$\|\calE\bgg\|_{\calF_{\dive}^{\,0}}
\le\varepsilon\|\calE\bgg\|_{\calF_{\dive}^{\,1}}$.
Consequently, we have
\[
\|\calE\bgg\|_{\calF_{\dive}^{\,1}}
\le\varepsilon\|\bgg\|_{\calF_{\dive}^{\,2}}.
\]

We then obtain the intermediate cases by interpolation. Let
$[\cdot,\cdot]_\vartheta$ denote complex interpolation. The vector
spherical harmonic coefficient map is an isometry from
$\calF_{\dive}^{\vartheta}$ onto the weighted sequence space
$\ell_2\bigl(\{\kappa_\ell^{-\vartheta}\}_{\ell,k}\bigr)$.
Hence, the interpolation formula for weighted Hilbert spaces gives
\[
[\calF_{\dive}^{\,a},\calF_{\dive}^{\,b}]_\vartheta
=
\calF_{\dive}^{\,(1-\vartheta)a+\vartheta b}.
\]
This is an equality of spaces with equivalent norms; for the canonical
weighted-sequence norms used here, the interpolation norm equals the
$\calF_{\dive}^{\,(1-\vartheta)a+\vartheta b}$ norm. The endpoint bounds are
$\|\calE\bgg\|_{\calF_{\dive}^{\,1}}\le \|\bgg\|_{\calF_{\dive}^{\,1}}$
and
$\|\calE \bgg\|_{\calF_{\dive}^{\,1}}
\le\varepsilon \|\bgg\|_{\calF_{\dive}^{\,2}}$. The complex interpolation theorem for linear operators therefore yields
\[
\|\calE \bgg\|_{
\calF_{\dive}^{\,1}}
\le\varepsilon^\vartheta \|\bgg\|_{\calF_{\dive}^{\,1+\vartheta}},
\]
which proves \eqref{eq:Hilbert-superconver2}. This is precisely the projection principle
used in \cite[Corollary~15]{Karvonen_2025_general}.
\end{proof}

\begin{theorem}\label{thm:hilbert-scale-superconvergence}
Assume that Assumption~\ref{ass:spectral-kernel} holds with $\sigma>1$
and that the hypotheses of Theorem~\ref{thm:Outside_native_space} are
satisfied. For $0\leq\vartheta\leq1$, let $\calF_\dive^{1+\vartheta}(\bbS^2)$ denote the space defined in \eqref{eq:PowerSpace}.
Then, for any
$\bgg\in\calF_\dive^{1+\vartheta}(\bbS^2)$, we have
\begin{equation}\label{eq:Hilbert-scale-superconvergence}
\|\bgg-I_X\bgg\|_{\calN_{\bfK_\dive}}
\le
C h_X^{\vartheta\sigma}
\|\bgg\|_{\calF_{\dive}^{1+\vartheta}(\bbS^2)} .
\end{equation}
Moreover, there exists a constant $C>0$ independent of $h_X$ such that
\begin{equation}\label{eq:Hs-superconvergence}
\|\bgg-I_X\bgg\|_{\bH^s(\bbS^2)}
\le
C h_X^{(1+\vartheta)\sigma-s}
\|\bgg\|_{\calF_\dive^{1+\vartheta}(\bbS^2)},
\quad 0\le s\le  \sigma .
\end{equation}
\end{theorem}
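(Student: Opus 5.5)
We plan to obtain \eqref{eq:Hilbert-scale-superconvergence} by applying Lemma~\ref{lem:hilbert-scale-superconvergence} with $\calQ=I_X$, and then to deduce \eqref{eq:Hs-superconvergence} from the zeros estimate of Lemma~\ref{lem:vector-zeros-estimate}.

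The first step is to check that $I_X$ satisfies the hypotheses of the lemma. By Proposition~\ref{prop:interpolation-projection}, $I_X$ is the $\calN_{\bfK_\dive}$-orthogonal projection onto $\calV_{X,\bfK_\dive}$. Since $\calF^1_\dive(\bbS^2)=\calN_{\bfK_\dive}$ with the exact native inner product, $I_X$ is an $\calF^1_\dive$-orthogonal projection.

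Next we need the $\bL_2$ estimate \eqref{eq:Hscale-L2error}. For $\bff\in\calF^1_\dive$, the difference $\bff-I_X\bff$ vanishes on $X$. Lemma~\ref{lem:vector-zeros-estimate} with $\beta=\sigma>1$ and $\alpha=0$ gives
\[
\|\bff-I_X\bff\|_{\bL_2}\le Ch_X^{\sigma}\|\bff-I_X\bff\|_{\bH^\sigma}.
\]
By the norm equivalence $\calN_{\bfK_\dive}\simeq\bH^\sigma_\dive$ and the contractivity of the projection,
\[
\|\bff-I_X\bff\|_{\bH^\sigma}\le C\|\bff-I_X\bff\|_{\calN_{\bfK_\dive}}\le C\|\bff\|_{\calN_{\bfK_\dive}}.
\]
Because $\calF^0_\dive$ carries exactly the $\bL_2$ norm on divergence-free fields, \eqref{eq:Hscale-L2error} holds with $\varepsilon=Ch_X^\sigma$. (Alternatively, Theorem~\ref{thm:Outside_native_space} with $\tau=\sigma$ and $s=0$ gives the same bound.) Lemma~\ref{lem:hilbert-scale-superconvergence} then yields
\[
\|\bgg-I_X\bgg\|_{\calN_{\bfK_\dive}}\le C^\vartheta h_X^{\vartheta\sigma}\|\bgg\|_{\calF^{1+\vartheta}_\dive}.
\]
Since $\vartheta\le1$, the factor $C^\vartheta$ is bounded by $\max(1,C)$, which proves \eqref{eq:Hilbert-scale-superconvergence}.

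For \eqref{eq:Hs-superconvergence}, fix $0\le s\le\sigma$. The error $\bgg-I_X\bgg$ lies in $\bH^\sigma$ and vanishes on $X$, and $h_X\le h_0$. Lemma~\ref{lem:vector-zeros-estimate} with $\beta=\sigma$ and $\alpha=s$ gives
\[
\|\bgg-I_X\bgg\|_{\bH^s}\le Ch_X^{\sigma-s}\|\bgg-I_X\bgg\|_{\bH^\sigma}.
\]
The norm equivalence bounds the right-hand side by $Ch_X^{\sigma-s}\|\bgg-I_X\bgg\|_{\calN_{\bfK_\dive}}$. Inserting the first estimate gives the exponent $\sigma-s+\vartheta\sigma=(1+\vartheta)\sigma-s$.

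We expect no serious obstacle in either step. The points needing care are the exact identification of the $\calF^0_\dive$ and $\calF^1_\dive$ norms with the $\bL_2$ and native norms, so that the constants in Lemma~\ref{lem:hilbert-scale-superconvergence} enter only through $\varepsilon$, and the requirement that $h_X\le h_0$, so that the zeros lemma applies uniformly.
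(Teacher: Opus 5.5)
Your proposal is correct and follows the paper's proof. You take $I_X$ as the $\calF^1_\dive$-orthogonal projection, apply Lemma~\ref{lem:hilbert-scale-superconvergence} with $\varepsilon=Ch_X^\sigma$, and then use the zeros lemma with $\beta=\sigma$, $\alpha=s$ together with the norm equivalence. The only difference is that you derive the $\bL_2$ bound directly from the zeros lemma and native-space contractivity instead of quoting Theorem~\ref{thm:Outside_native_space} with $\tau=\sigma$, $s=0$; this gives the same estimate without the polynomial surrogate.
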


\begin{proof}
By the estimate from Theorem~\ref{thm:Outside_native_space},
there exists a constant $C>0$ such that
\begin{equation}\label{eq:L2error}
\|\bff-I_X\bff\|_{\bL_2(\bbS^2)}
\le
C h_X^{\sigma}
\|\bff\|_{\calF_{\dive}^1(\bbS^2)},
\quad
\bff\in\calF_{\dive}^1(\bbS^2).
\end{equation}
The interpolation operator $I_X$ is the
$\calF_{\dive}^1(\bbS^2)$-orthogonal projection onto the kernel-based trial space.
Therefore Lemma~\ref{lem:hilbert-scale-superconvergence}, applied with
$\varepsilon=C h_X^\sigma$, gives
\[
\|\bgg-I_X\bgg\|_{\calF_{\dive}^1(\bbS^2)}
\le
(C h_X^\sigma)^\vartheta
\|\bgg\|_{\calF_{\dive}^{1+\vartheta}(\bbS^2)} .
\]
Absorbing $C^\vartheta$ into the generic constant $C$ proves
\eqref{eq:Hilbert-scale-superconvergence}.

The Sobolev zeros lemma \ref{lem:vector-zeros-estimate} gives
\[
\begin{aligned}
\|\bgg-I_X\bgg\|_{\bH^s(\bbS^2)}
&\le
C h_X^{\sigma-s}
\|\bgg-I_X\bgg\|_{\bH^\sigma(\bbS^2)} \\
&\le
C h_X^{\sigma-s}
h_X^{\vartheta\sigma}
\|\bgg\|_{\calF_{\dive}^{1+\vartheta}(\bbS^2)} \\
&=
C h_X^{(1+\vartheta)\sigma-s}
\|\bgg\|_{\calF_{\dive}^{1+\vartheta}(\bbS^2)} .
\end{aligned}
\]
This completes the proof of \eqref{eq:Hs-superconvergence}.
\end{proof}

The theorem shows that additional smoothness of the target field beyond the native space regularity improves convergence in the native norm. The parameter
$\vartheta$ measures this extra smoothness. When $\vartheta=0$,
 the estimate gives the basic bound in $\calF_\dive^1$; when $\vartheta=1$, it gives the full improvement provided by the Hilbert-scale argument. In many applications, the target function is smoother than the functions in the native space. In this setting, one can obtain a standard doubling argument \cite{Fuselier_2012SINUM_scattered,Morton_2002JAT_error,Narcowich_2007FoCM_direct,Schaback_1999MCoM_improved,Sun_2022SISC_kernel,Wendland_2009SINUM_divergence}.

\begin{corollary}\label{cor:Inside_native_space}
    Under the assumptions of Theorem \ref{thm:Outside_native_space}, if $\bff\in \bH_{\dive}^{2\sigma-s}(\bbS^2)$ with $0\leq s\leq  \sigma$, then
    $$\|\bff-I_X\bff\|_{\bH^{s}(\bbS^2)}\leq C h_X^{2(\sigma-s)}\|\bff\|_{\bH^{2\sigma-s}(\bbS^2)}.$$
\end{corollary}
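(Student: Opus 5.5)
Set $\bfe:=\bff-I_X\bff$ and write $\vartheta:=(\sigma-s)/\sigma\in[0,1]$, so that $2\sigma-s=(1+\vartheta)\sigma$. This is exactly the smoothness index in Theorem~\ref{thm:hilbert-scale-superconvergence}, and the plan is to reduce the corollary to that theorem.

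\textbf{Step 1 (function spaces).} Under Assumption~\ref{ass:spectral-kernel}, $\kappa_\ell\asymp(1+\lambda_\ell)^{-\sigma}$. Hence
\[
\kappa_\ell^{-(1+\vartheta)}\asymp(1+\lambda_\ell)^{(1+\vartheta)\sigma}=(1+\lambda_\ell)^{2\sigma-s}.
\]
Since divergence-free fields have vanishing curl coefficients, this gives $\calF^{1+\vartheta}_\dive(\bbS^2)\simeq\bH^{2\sigma-s}_\dive(\bbS^2)$ with equivalent norms. Thus $\bff\in\calF^{1+\vartheta}_\dive$ and $\|\bff\|_{\calF^{1+\vartheta}_\dive}\le C\|\bff\|_{\bH^{2\sigma-s}}$.

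\textbf{Step 2 (native-norm error).} Theorem~\ref{thm:hilbert-scale-superconvergence}, in the form \eqref{eq:Hilbert-scale-superconvergence}, now gives
\[
\|\bfe\|_{\calN_{\bfK_\dive}}\le Ch_X^{\vartheta\sigma}\|\bff\|_{\bH^{2\sigma-s}}=Ch_X^{\sigma-s}\|\bff\|_{\bH^{2\sigma-s}}.
\]
Using $\calN_{\bfK_\dive}\simeq\bH^\sigma_\dive$, this becomes $\|\bfe\|_{\bH^\sigma}\le Ch_X^{\sigma-s}\|\bff\|_{\bH^{2\sigma-s}}$.

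\textbf{Step 3 (zeros lemma).} Since $\bfe|_X=\bld 0$ and $\sigma>1$, Lemma~\ref{lem:vector-zeros-estimate} with $\beta=\sigma$ and $\alpha=s$ yields
\[
\|\bfe\|_{\bH^s}\le Ch_X^{\sigma-s}\|\bfe\|_{\bH^\sigma}\le Ch_X^{2(\sigma-s)}\|\bff\|_{\bH^{2\sigma-s}},
\]
which is the claim. This is the same chain as \eqref{eq:Hs-superconvergence}, with the specific choice $\vartheta=(\sigma-s)/\sigma$ giving total exponent $(1+\vartheta)\sigma-s=2(\sigma-s)$.

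\textbf{Where care is needed.} The only delicate point is checking the hypotheses of Theorem~\ref{thm:hilbert-scale-superconvergence}, namely that its $\bL_2$ estimate \eqref{eq:L2error} holds. That estimate comes from Theorem~\ref{thm:Outside_native_space} with $\tau=\sigma$, $s=0$, where the mesh-ratio factor $\rho_X^{0}=1$ disappears, so no quasi-uniformity assumption is needed.

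Two boundary cases should be checked:
\begin{itemize}
\item At $s=\sigma$ we have $\vartheta=0$, and the bound reduces to the stability $\|\bfe\|_{\bH^\sigma}\le C\|\bff\|_{\bH^\sigma}$, which holds by the projection property (Proposition~\ref{prop:interpolation-projection}).
\item At $s=0$ we have $\vartheta=1$, the full doubling case, giving the rate $h_X^{2\sigma}$.
\end{itemize}
Finally, all constants depend only on $\sigma$, $s$ and the constants $c_1,c_2$ in \eqref{eq:decay_condition_kernel}, and not on $X$.
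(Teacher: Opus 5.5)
Your proof is correct and follows the paper's argument: the paper sets $\vartheta=1-s/\sigma$ in Theorem~\ref{thm:hilbert-scale-superconvergence} and reads off \eqref{eq:Hs-superconvergence}. You unfold that theorem's own chain (the native-norm bound, then the zeros lemma with $\beta=\sigma$, $\alpha=s$) and make explicit the equivalence $\calF^{1+\vartheta}_\dive(\bbS^2)\simeq\bH^{2\sigma-s}_\dive(\bbS^2)$, which the paper's one-line proof uses implicitly.
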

\begin{proof}
    In Theorem~\ref{thm:hilbert-scale-superconvergence}, set
$\vartheta=1-s/\sigma$. Then
$(1+\vartheta)\sigma=2\sigma-s$ and
$(1+\vartheta)\sigma-s=2(\sigma-s)$, which gives the stated estimate.
\end{proof}

\section{Numerical examples}
\label{sec:Numer_Examp}

In this section, we present several numerical experiments to verify the theoretical results for the convergence rates and stability of interpolation with matrix-valued kernels on surfaces.  
We compare the classical potential-based construction with two alternative
constructions derived from the more general isotropic ansatz
\begin{equation*}
	\mathbf K_{\mathrm{div}}(\bx,\by)
	=
	\alpha(r)
	\bigl((\bn_{\bx}\cdot\bn_{\by})\mathbf I
	-\bn_{\by}\bn_{\bx}^{\top}\bigr)
	-
	\beta(r)
	\bigl(\bn_{\bx}\times(\bx-\by)\bigr)
	\bigl(\bn_{\by}\times(\bx-\by)\bigr)^{\top}.
\end{equation*}
For conciseness, we characterize each kernel by its choice of $\beta(r)$.
Once $\beta$ is specified, the divergence-free constraint uniquely determines
$\alpha$ and hence the corresponding kernel. We consider the following three
cases:
\begin{align*}
	&\mathbf K_{\mathrm{div}}^{0}: \alpha_0(r)=\calI \varphi(r), \quad \beta_0(r)=\varphi(r),\\
	&\mathbf K_{\mathrm{div}}^{1}: \alpha_1(r) = \varphi(r),\quad \beta_1(r)=\mathcal D\varphi(r),\\
	&\mathbf K_{\mathrm{div}}^{2}: \alpha_2(r)=\calD \varphi(r),\quad  \beta_2(r)=\mathcal D^2\varphi(r).
\end{align*}

We use two standard families of scalar kernels obtained by restricting radial
basis functions to the surface: the Mat\'ern (MA) and Wendland (WE)
families. A Mat\'ern kernel is defined by
\begin{equation*}
	\varphi_{\nu}(r)
	=
	\frac{2^{1-\nu}}{\Gamma(\nu)}
	(\varepsilon r)^\nu K_\nu(\varepsilon r),
\end{equation*}
where $K_\nu$ denotes the modified Bessel function of the second kind of order $\nu$, and $\varepsilon>0$ is the shape parameter. Mat\'{e}rn kernels are strictly positive definite on $\mathbb R^d$, and the parameter $\nu$ determines their smoothness. For example, the associated radial kernel belongs to $C^2(\mathbb R^d)$ when $\nu=(d+3)/2$, whereas it belongs to $C^4(\mathbb R^d)$ when $\nu=(d+5)/2$. For $\nu=7/2$, the Mat\'{e}rn profile has the explicit form
\begin{equation*}
	\varphi_{7/2}(r)
	=
	e^{-\varepsilon r}
	\left(
	1+\varepsilon r
	+\frac{2}{5}(\varepsilon r)^2
	+\frac{1}{15}(\varepsilon r)^3
	\right).
\end{equation*}

As a compactly supported alternative, we use the Wendland functions
$\varphi_{d,\ell}:[0,\infty)\to\mathbb R$ introduced in
\cite{Wendland_1995Adv_piecewise,Wendland_2004book_scattered}. The corresponding
radial kernels are strictly positive definite on $\mathbb{R}^d$ and belong to
$C^{2\ell}(\mathbb{R}^d)$. In most of our numerical experiments, we use
\begin{equation*}
	\varphi_{3,3}(r)
	=
	(1-\varepsilon r)_+^8
	\left(
	32(\varepsilon r)^3
	+25(\varepsilon r)^2
	+8\varepsilon r+1
	\right).
\end{equation*}
The stability analysis additionally includes $\varphi_{5,3}$ and $\varphi_{7,3}$.

\subsection{Convergence test}

We investigate the convergence and stability of the three kernel constructions
using two vector fields on $\mathbb{S}^2$ with different levels of regularity
\cite{Fuselier_2009SINUM_stability}: a smooth field in
$C^\infty(\mathbb{S}^2)$ (Field~1) and a field of limited regularity belonging
to $\bH^\beta(\mathbb{S}^2)$ with $\beta<2$ (Field~2). Both fields are
generated from scalar stream functions. Given a stream function $s$, the
corresponding vector field is defined by
$\bff=\bL_*s$,
where $\bL_*$ is the surface curl operator.

\smallskip
\textbf{Field 1.} 
The stream function is defined by
\[s_1(\bx)=-\frac1{\sqrt{3}}Y_{1,0}(\bx)+\frac{8\sqrt{2}}{3\sqrt{385}}Y_{5,4}(\bx).\]
Since $s_1$ is a finite linear combination of spherical harmonics, the
resulting field $\bff_1=\bL_* s_1$ belongs to
$C^\infty(\mathbb{S}^2)$. Figure~\ref{field1divcon}(a) shows the stream
function $s_1$.
\smallskip

\textbf{Field 2.} For $\bx_c\in\mathbb{S}^2$ with spherical coordinates $(\theta_c,\lambda_c)$, set $t = \bx\cdot\bx_c$ and $a = 1-t$. Define
\[
g(\bx; \theta_c, \lambda_c) = -\frac{1}{2}\Big((3t + 3\sqrt{2}a^{3/2} - 4) + (3t^2 - 4t + 1)\log a + (3t - 1)a\log\big(\sqrt{2}a + a\big)\Big).
\]
The stream function is
\[
s_2(\bx) = \int_{-\pi/2}^{\theta} \sin^{14}(2\zeta)\,\mathrm{d}\zeta - 3g(\bx; \pi/4, -\pi/12),
\]
where $\theta$ is the latitudinal coordinate of $\bx$. The
corresponding divergence-free field
$\bff_2=\bL_* s_2$ belongs to
$\bH_{\mathrm{div}}^\beta(\mathbb{S}^2)$ with $\beta<2$
\cite{Fuselier_2009MCoM_error}. Figure~\ref{field1divcon}(d) shows the stream
function $s_2$.
\smallskip

We first consider minimum-energy (ME) node sets on the sphere. The interpolation sets contain
$N\in\{529,1025,1849,3137,5041\}$
nodes, while a fixed maximum-determinant (MD) node set
$Y\subset\mathbb{S}^2$ with $M=20164$ nodes is used for error evaluation. We measure the accuracy using the discrete relative $\ell_2$-error
\begin{equation}
	\mathcal{E}(X)
	:=
	\frac{\|\bu-I_X\bu\|_{\ell_2(Y)}}
	{\|\bu\|_{\ell_2(Y)}},
	\quad 
	\|\bu\|_{\ell_2(Y)}^2
	:=
	\sum_{j=1}^{|Y|}w_j\|\bu(\by_j)\|_2^2,
	\label{eq:discrete-relative-error}
\end{equation}
where $I_X\boldsymbol{u}$ denotes the interpolant constructed from the node set $X$, and $\{w_j\}$ are the quadrature weights associated with the MD nodes.

The experiments use the Mat\'{e}rn kernel $\mathrm{MA}_{7/2}$ and the compactly supported Wendland kernel $\mathrm{WE}_{3,3}$. For each kernel, the shape parameter is held fixed as the number of interpolation nodes increases. For Field~1, the shape parameters for the Mat\'{e}rn kernel are $\varepsilon_1=9$, $\varepsilon_2=7$, and $\varepsilon_3=4$, while the corresponding Wendland
shape parameters are $\varepsilon_1=11/10$, $\varepsilon_2=4/3$, and $\varepsilon_3=5/3$. The same parameter choices are used for Field~2.

Table~\ref{tab:Conv_MA} and Table~\ref{tab:Conv_WE} report the errors and convergence rates for the smooth Field~1 obtained using the Mat\'{e}rn and Wendland kernels, respectively. The errors decay approximately as $\mathcal{O}(h_X^{11})$, $\mathcal{O}(h_X^{9})$, and $\mathcal{O}(h_X^{7})$ for $\mathbf{K}_{\dive}^{0}$, $\mathbf{K}_{\dive}^{1}$, and $\mathbf{K}_{\dive}^{2}$, respectively. These
rates are consistent with the Hilbert-scale superconvergence estimate
established in Theorem~\ref{thm:hilbert-scale-superconvergence}. In
particular, the proposed constructions $\mathbf{K}_{\dive}^{0}$ and
$\mathbf{K}_{\dive}^{1}$ attain higher convergence orders than the classical
construction $\mathbf{K}_{\dive}^{2}$. These results support the predicted
accuracy gains for sufficiently smooth target fields.
For Field~2, the limited regularity of the target field restricts the attainable convergence order. Nevertheless, Figure~\ref{fig.Err_Field2} shows that the errors decrease consistently as the nodes are refined. For the classical construction $\mathbf{K}_{\dive}^{2}$, the kernels and corresponding shape parameters coincide with those used in \cite{Fuselier_2009SINUM_stability}, and the resulting numerical values reproduce those reported therein.

Figure~\ref{field1divcon} compares the two target vector fields with their
divergence-free interpolants. The top and bottom rows show Field~1 and
Field~2, respectively, together with their reconstructions obtained using
$\mathbf{K}_{\dive}^{0}$. Both interpolants use $N=1849$ ME nodes. The close agreement between the target and reconstructed
fields indicates that $\mathbf{K}_{\dive}^{0}$ accurately approximates both
fields. Figure~\ref{field1divallerr} further shows the pointwise interpolation errors
for Field~1 obtained using the three kernels $\mathbf{K}_{\dive}^{0}$,
$\mathbf{K}_{\dive}^{1}$, and $\mathbf{K}_{\dive}^{2}$. Among them,
$\mathbf{K}_{\dive}^{0}$ yields the smallest errors across the sphere,
whereas $\mathbf{K}_{\dive}^{2}$ produces the largest.

\begin{table}
	\centering
	\caption{Convergence results for the approximation of Field 1 using the restricted \textbf{MA$_{7/2}$} kernel. The three methods use the shape parameters $\varepsilon_1=9$, $\varepsilon_2=7$ and $\varepsilon_3=4$, respectively. Errors are
		evaluated on $N=20164$ MD nodes.}
	\begin{tabular}{c *{6}{c}} 
		\toprule[1pt] 
		$N$ & $\mathbf{K}_\dive^{0}$ & rate & $\mathbf{K}_\dive^{1}$ & rate & $\mathbf{K}_\dive^{2}$ & rate \\
		\midrule 
        529 & 5.833e-06 & & 8.887e-06 & & 1.013e-05 & \\
        1025 & 1.435e-07 & 11.20 & 4.164e-07 & 9.25 & 8.605e-07 & 7.46 \\
         1849 & 6.316e-09 & 10.59 & 2.926e-08 & 9.00 & 1.034e-07 & 7.18 \\
       3137 & 3.405e-10 & 11.05 & 2.511e-09 & 9.29 & 1.524e-08 & 7.24 \\
       5041 & 2.457e-11 & 11.09 & 3.059e-10 & 8.88 & 2.890e-09 & 7.01 \\
		\bottomrule[1pt] 
	\end{tabular}
	\label{tab:Conv_MA}
\end{table}

\begin{table}
	\centering
	\caption{Convergence results for the approximation of Field~1 using the restricted \textbf{WE$_{3,3}$} kernel. The three methods use the shape parameters $\varepsilon_1=11/10$, $\varepsilon_2=4/3$, and $\varepsilon_3=5/3$, respectively. Errors are evaluated at $N=20164$ MD nodes.}
	\begin{tabular}{c *{6}{c}} 
		\toprule[1pt] 
		$N$ & $\mathbf{K}_\dive^{0}$ & rate & $\mathbf{K}_\dive^{1}$ & rate & $\mathbf{K}_\dive^{2}$ & rate \\
		\midrule 
		529 & 1.222e-05 & & 7.695e-06 & & 1.527e-05 & \\
        1025 & 2.419e-07 & 11.86 & 3.546e-07 & 9.31 & 1.475e-06 & 7.07 \\
       1849 & 9.649e-09 & 10.92 & 2.418e-08 & 9.10 & 1.799e-07 & 7.13 \\
       3137 & 5.280e-10 & 10.99 & 2.069e-09 & 9.30 & 2.621e-08 & 7.29 \\
       5041 & 3.617e-11 & 11.30 & 2.514e-10 & 8.89 & 5.263e-09 & 6.77 \\
		\bottomrule[1pt] 
	\end{tabular}
	\label{tab:Conv_WE}
\end{table}

\begin{figure}
	\centering
	\begin{tikzpicture}
		\begin{groupplot}[
			group style={
				group size=3 by 1, 
				horizontal sep=0pt, 
				vertical sep=18pt,
			},
			width=3.5cm, height=4.5cm, 
			scale only axis,           
			xmode=log, ymode=log,
			xmin=8e-3, xmax=0.2,
			ymin=8e-4, ymax=4e-1,
			grid=both,
			major grid style={line width=0.2pt, draw=gray!40, dashed},
			minor grid style={line width=0.1pt, draw=gray!15, dotted},
			minor x tick num=3, minor y tick num=3,
			ticklabel style={font=\small},
			xlabel={$h_X$},
			xlabel style={font=\small, yshift=7pt},
			legend style={
				at={(0.99,0.01)}, 
				anchor=south east,
				font=\tiny, 
				nodes={inner ysep=3pt},
				cells={anchor=west},
				legend columns=1,
				inner sep=1pt,
				outer sep=1pt,
				draw=none, 
				fill opacity=0.8,
				text opacity=1
			},
			legend image post style={mark size=1.8pt}, 
			cycle list={
				{color={myred}, mark=triangle*, line width=0.8pt,mark size=1.5pt},
				{color={myblue}, mark=square*, line width=0.8pt,mark size=1pt},
			},
			]
			
			\nextgroupplot[
			ylabel={Relative $\ell_2$ error}, 
			ylabel style={font=\small, xshift=6pt},
			title={$\mathbf{K}_\dive^{0}$},
			title style={font=\large, yshift=-3pt},	]
			
			\addplot table[x index=0, y index=1, col sep=space] {data_surface/methodfield3diverrork0.txt};
			\addlegendentry{MA$_{7/2}$}

            \addplot table[x index=0, y index=2, col sep=space] {data_surface/methodfield3diverrork0.txt};
			\addlegendentry{WE$_{3,3}$}

			\addplot[
			domain=0.012:0.1,
			samples=2,
			color=black,
			line width=0.5pt,  %
			dashed,
			forget plot,
			] {10* x^2};
			
			\node[anchor=south west, font=\small] at (axis cs:0.07,0.032) {$h_X^{2}$};
			
			\nextgroupplot[
			title={$\mathbf{K}_\dive^{1}$},
			title style={font=\large, yshift=-3pt},
			yticklabels={}, 
            ylabel={},
			axis y line*=right,]
			
			\addplot table[x index=0, y index=1, col sep=space] {data_surface/methodfield3diverrork1.txt};
			\addlegendentry{MA$_{7/2}$}

            \addplot table[x index=0, y index=2, col sep=space] {data_surface/methodfield3diverrork1.txt};
			\addlegendentry{WE$_{3,3}$}

			\addplot[
			domain=0.012:0.1,
			samples=2,
			color=black,
			line width=0.5pt,  %
			dashed,
			forget plot,
			] {10* x^2};
			
			\node[anchor=south west, font=\small] at (axis cs:0.07,0.032) {$h_X^{2}$};

			\nextgroupplot[
			title={$\mathbf{K}_\dive^{2}$},
			title style={font=\large, yshift=-3pt},
			yticklabels={}, 	
            ylabel={},
			axis y line*=right, ]

			\addplot table[x index=0, y index=1, col sep=space] {data_surface/methodfield3diverrork2.txt};
			\addlegendentry{MA$_{7/2}$}

            \addplot table[x index=0, y index=2, col sep=space] {data_surface/methodfield3diverrork2.txt};
			\addlegendentry{WE$_{3,3}$}

			\addplot[
			domain=0.012:0.1,
			samples=2,
			color=black,
			line width=0.5pt,  %
			dashed,
			forget plot,
			] {10* x^2};
			
			\node[anchor=south west, font=\small] at (axis cs:0.07,0.032) {$h_X^{2}$};

			
			
		\end{groupplot}
	\end{tikzpicture}
	
	\captionsetup{font=normalsize}
	\caption{Relative $\ell_2$ errors for Field~2 obtained via matrix-valued kernel interpolation with \textbf{MA$_{7/2}$} and \textbf{WE$_{3,3}$} kernels on ME node sets.}
	\label{fig.Err_Field2}
\end{figure}

\begin{figure}
	\centering
	\begin{tikzpicture}
		\matrix (M) [
		matrix of nodes,
		nodes={inner sep=0pt, anchor=center}, 
		column sep=0.2cm, 
		row sep=0.5cm,   
		] {
			\includegraphics[width=0.3\textwidth, trim=1cm 1cm 0.95cm 0.5cm, clip]{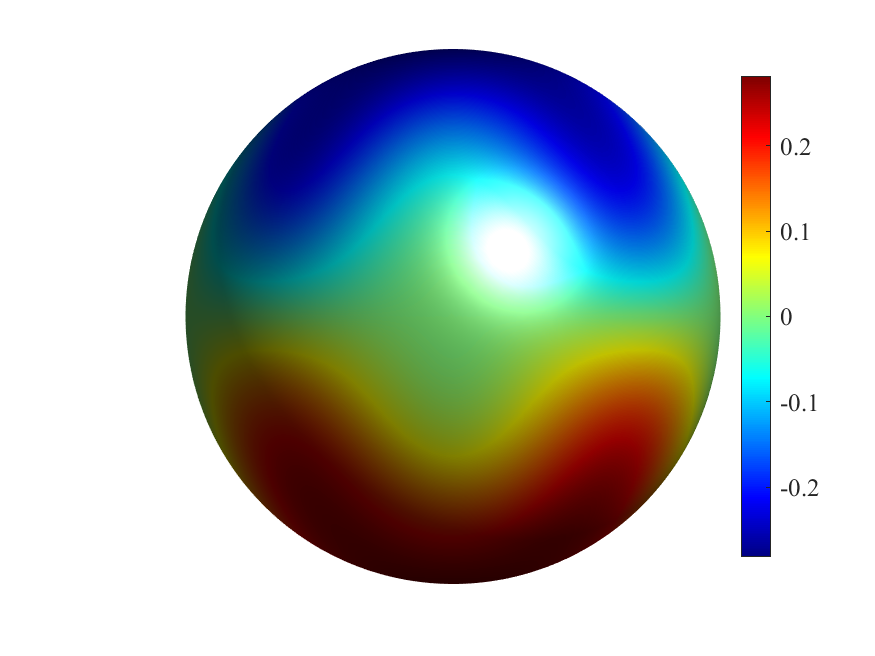} &
			\includegraphics[width=0.3\textwidth, trim=1cm 1cm 1cm 0.5cm, clip]{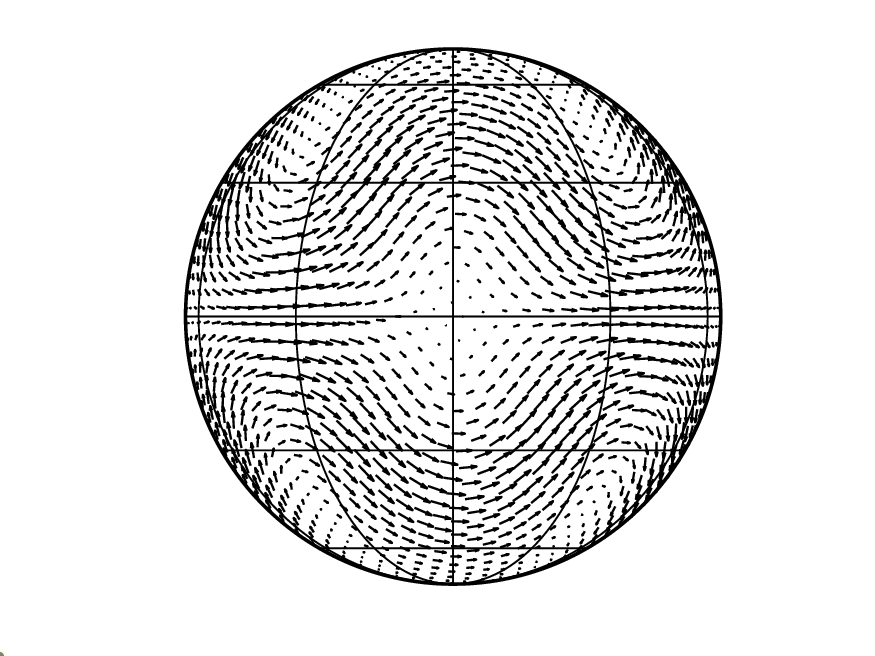}&
			\includegraphics[width=0.3\textwidth, trim=1cm 1cm 1cm 0.5cm, clip]{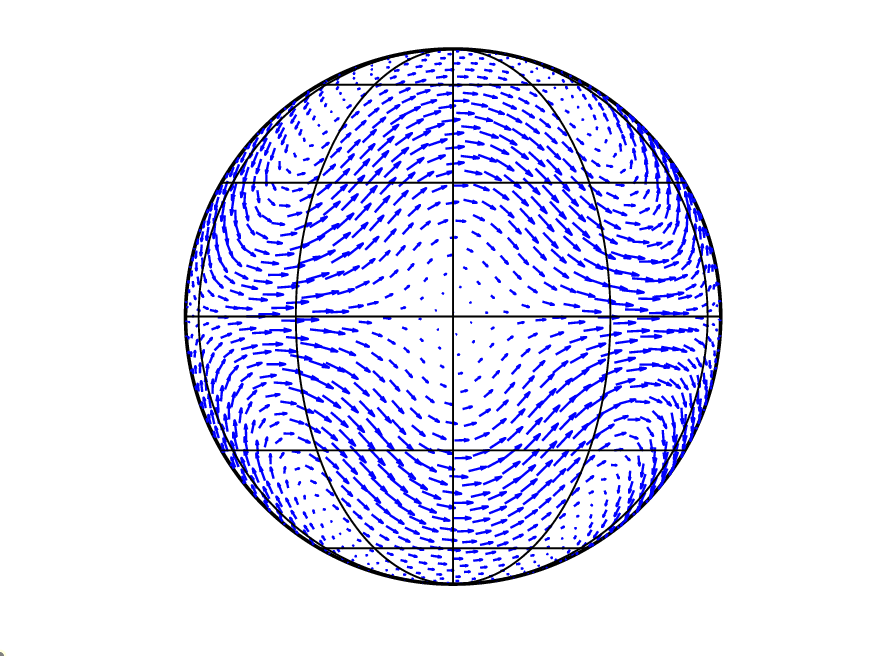}\\
		};
		\node [anchor=north, inner sep=1pt, yshift=-1mm] at (M-1-1.south) {(a)  Stream function  $s_1$};
		
		\node [anchor=north, inner sep=1pt, yshift=-1mm] at (M-1-2.south) {(b) vector field  $\bL_* s_1$};
		
		\node [anchor=north, inner sep=1pt, yshift=-1mm] at (M-1-3.south) {(c) Reconstructed vector field} ;

	\end{tikzpicture}\\
	\hspace{0.3cm}
	
	\begin{tikzpicture}
		\matrix (M) [
		matrix of nodes,
		nodes={inner sep=0pt, anchor=center}, 
		column sep=0.2cm, 
		row sep=0.5cm,   
		] {
			\includegraphics[width=0.3\textwidth, trim=1cm 1cm 0.95cm 0.5cm, clip]{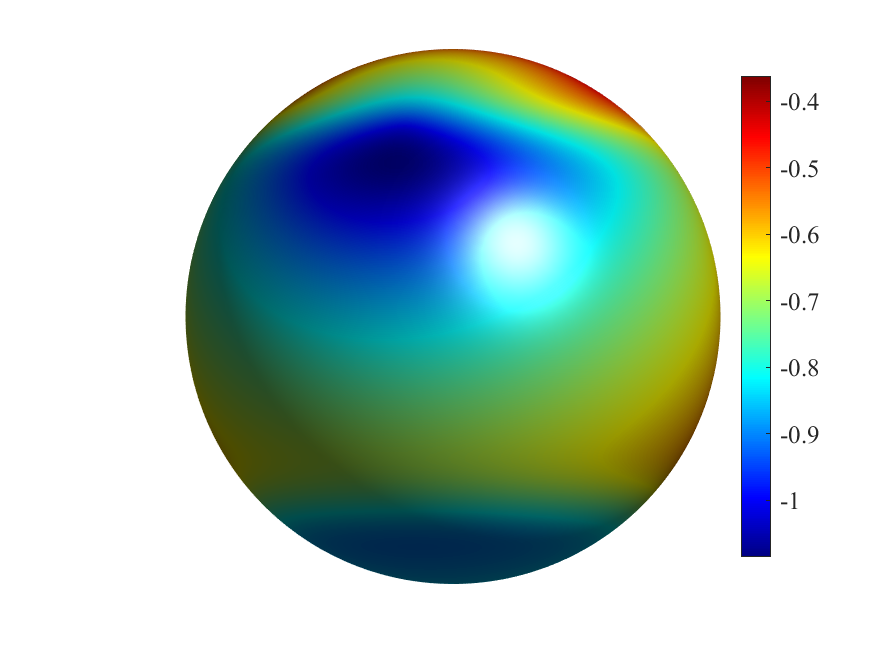} &
			\includegraphics[width=0.3\textwidth, trim=1cm 1cm 1cm 0.5cm, clip]{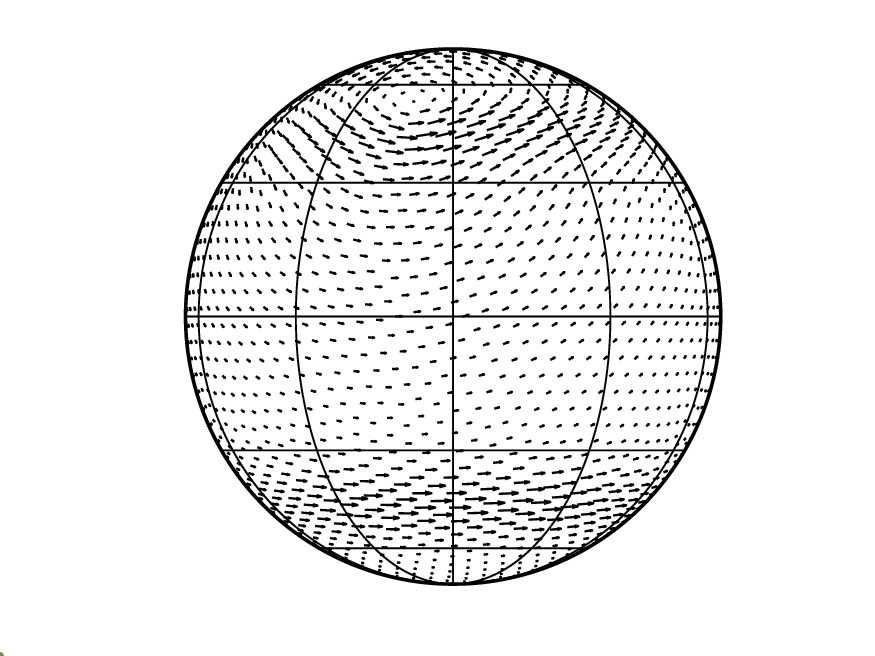}&
			\includegraphics[width=0.3\textwidth, trim=1cm 1cm 1cm 0.5cm, clip]{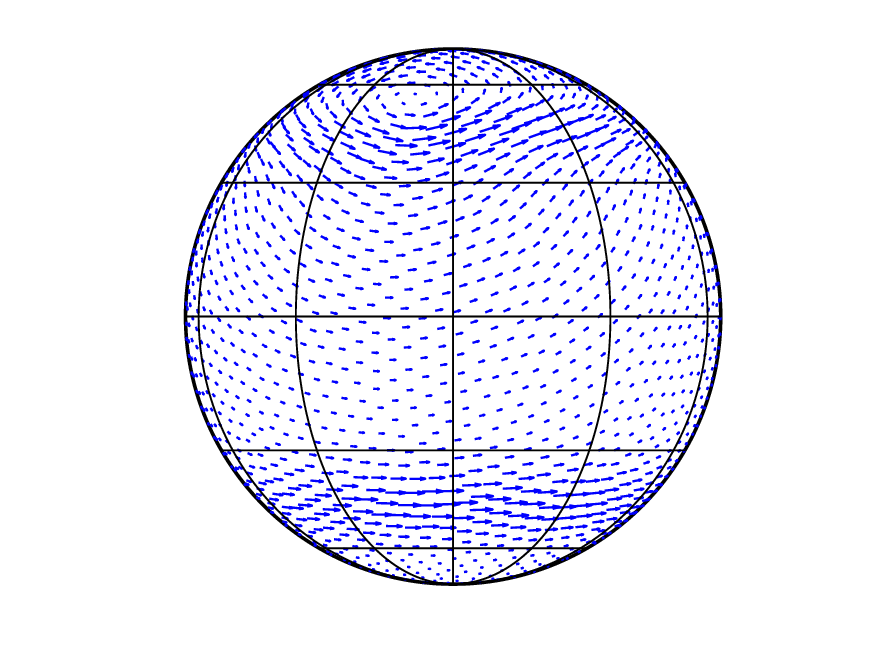}\\
		};
		\node [anchor=north, inner sep=1pt, yshift=-1mm] at (M-1-1.south) {(d) Stream function  $s_2$};
		
		\node [anchor=north, inner sep=1pt, yshift=-1mm] at (M-1-2.south) {(e) vector field  $\bL_* s_2$};
		
		\node [anchor=north, inner sep=1pt, yshift=-1mm] at (M-1-3.south) {(f) Reconstructed vector field } ;
		
	\end{tikzpicture}
	
	\captionsetup{font=normalsize}
\caption{Comparison of the target and reconstructed fields. \textbf{Top row}: Field~1. (a) Stream function  $s_1$; (b) divergence-free vector field $\bL_* s_1$; and (c) reconstructed field using $\mathbf{K}_{\dive}^{0}$ with $N=1849$ ME nodes. \textbf{Bottom row}: Field~2. (d) Stream function  $s_2$; (e) divergence-free vector field $\bL_* s_2$; and (f) reconstructed field using $\mathbf{K}_{\dive}^{0}$ with $N=1849$ ME nodes.}
	\label{field1divcon}
\end{figure}

\begin{figure}
	\centering
	\begin{tikzpicture}
		\matrix (M) [
		matrix of nodes,
		nodes={inner sep=0pt, anchor=center}, 
		column sep=0.2cm, 
		row sep=0.5cm,   
		] {
			\includegraphics[width=0.3\textwidth, trim=3cm 2cm 3cm 2cm, clip]{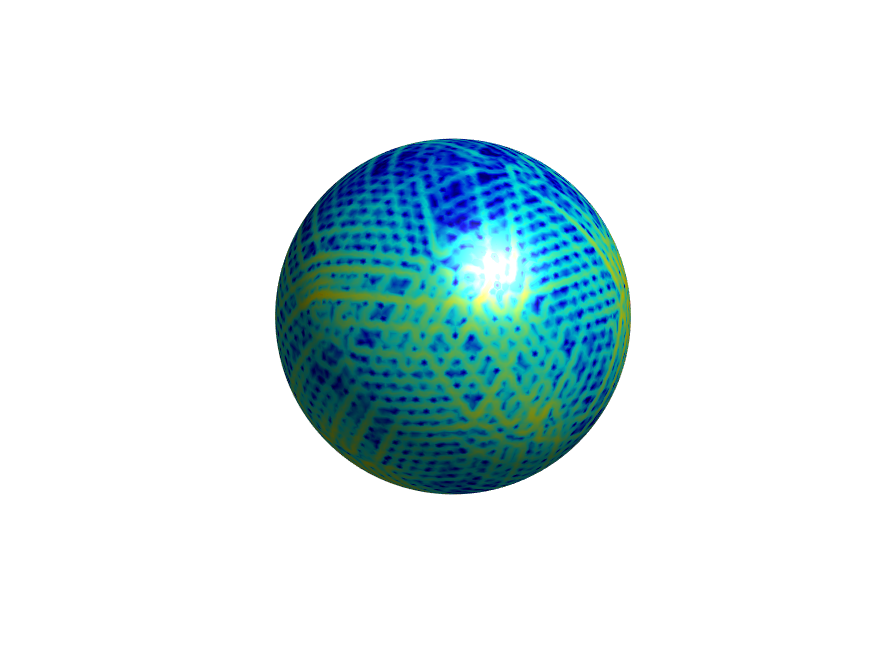} &
			\includegraphics[width=0.3\textwidth, trim=3cm 2cm 3cm 2cm, clip]{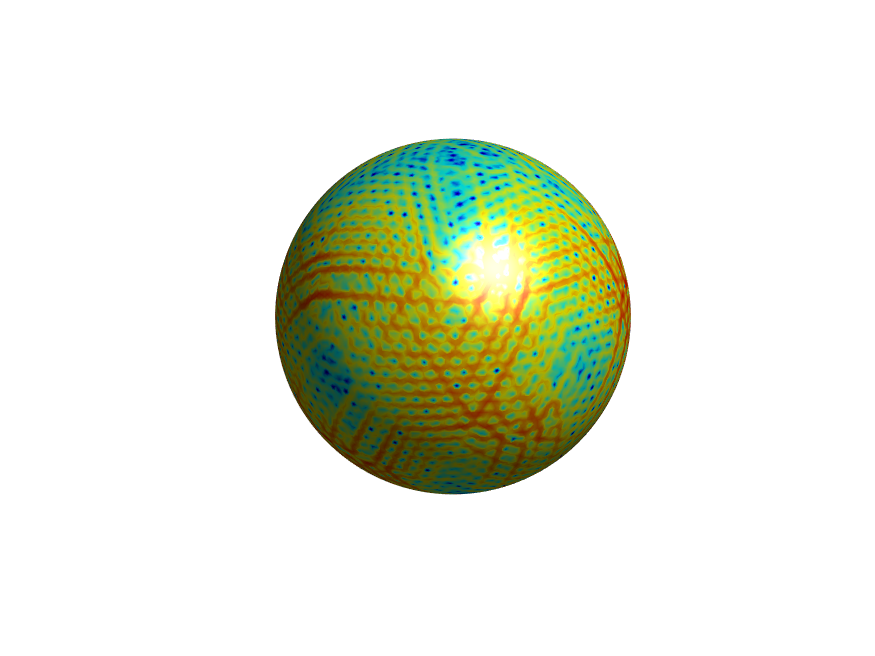}&
			\includegraphics[width=0.3\textwidth, trim=3.8cm 2cm 2.2cm 2cm, clip]{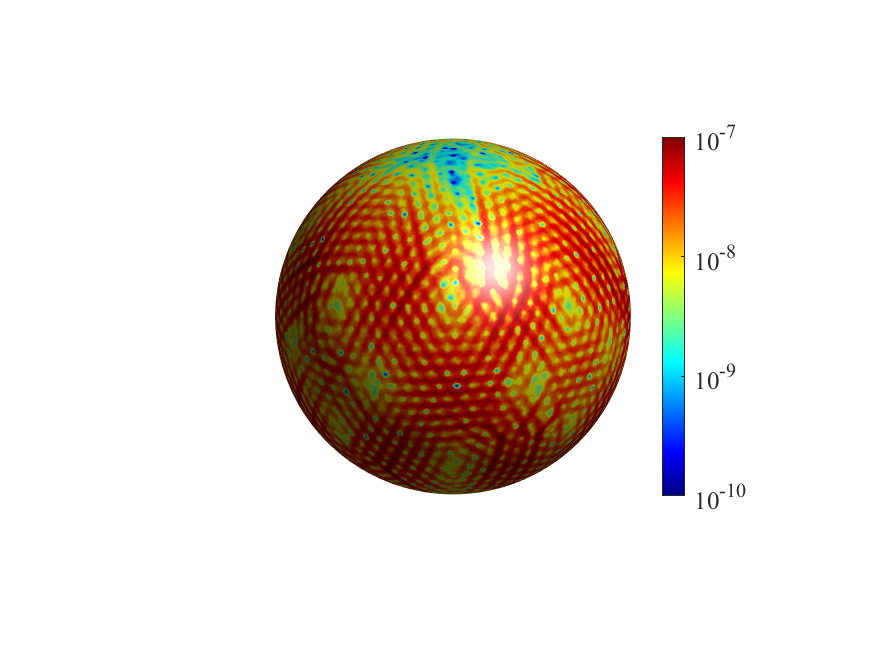}\\
		};
		\node [anchor=north, inner sep=1pt, yshift=-1mm] at (M-1-1.south) {(a) $\mathbf{K}_\dive^{0}$};
		
		\node [anchor=north, inner sep=1pt, yshift=-1mm] at (M-1-2.south) {(b)$\mathbf{K}^{1}_\dive$};
		
		\node [anchor=north, inner sep=1pt, yshift=-1mm] at (M-1-3.south) {(c) $\mathbf{K}_\dive^2$} ;
	\end{tikzpicture}
	
	\captionsetup{font=normalsize}
	\caption{Pointwise errors for approximating Field~1 using the three matrix-valued kernels. All experiments use $N=1849$ ME interpolation nodes and $M=20164$ evaluation nodes.}
	\label{field1divallerr}
\end{figure}

\subsection{Spectral stability of the interpolation matrices}

We assess the spectral stability of the interpolation matrices by examining
their smallest eigenvalues. The ME and Fibonacci node
sets considered here are quasi-uniform, so $h_X\asymp q_X$. Consequently, the
asymptotic rates can be expressed in terms of either quantity. For each node
set, we assemble the $2N\times 2N$ interpolation matrix in local orthonormal
tangent frames and compute its smallest eigenvalue.

The top row of Figure~\ref{fig.field1diveig} shows the results for the
divergence-free kernels $\mathbf{K}_{\dive}^{0}$,
$\mathbf{K}_{\dive}^{1}$, and $\mathbf{K}_{\dive}^{2}$ generated from the
Mat\'ern kernel $\mathrm{MA}_{7/2}$ on the ME and Fibonacci node sets. The
computed smallest eigenvalues decay approximately as
$\mathcal{O}(h_X^{9})$, $\mathcal{O}(h_X^{7})$, and
$\mathcal{O}(h_X^{5})$, respectively. These exponents are consistent with the
theoretical lower bound
$\lambda_{\min}\gtrsim q_X^{2\sigma-2}$
established in Corollary~\ref{cor:smallest-eigenvalue-rate}, where $\sigma=11/2$, $9/2$, and $7/2$ for $\mathbf{K}_{\dive}^{0}$, $\mathbf{K}_{\dive}^{1}$, and $\mathbf{K}_{\dive}^{2}$, respectively.

For Wendland kernels, the positive definiteness of the resulting
matrix-valued kernel depends on the dimension for which the underlying scalar
kernel is positive definite. As established in \cite{Sun_2026_error}, the
construction of $\mathbf{K}_{\dive}^{0}$ requires positive definiteness of the
underlying Wendland kernel in a higher dimension than the constructions of
$\mathbf{K}_{\dive}^{1}$ and $\mathbf{K}_{\dive}^{2}$. To examine this
requirement numerically, we construct $\mathbf{K}_{\dive}^{0}$ from the
Wendland kernels $\varphi_{3,3}$, $\varphi_{5,3}$, and $\varphi_{7,3}$ using the common
shape parameter $\varepsilon=4/3$. The bottom row of
Figure~\ref{fig.field1diveig} shows the smallest eigenvalues of the resulting
interpolation matrices. At $N=5041$, the smallest eigenvalues associated with
$\varphi_{3,3}$ and $\varphi_{5,3}$ are negative for both the ME and Fibonacci node
sets and are therefore omitted from the logarithmic plots. These results are
consistent with the stronger dimensional positive-definiteness requirement
for $\mathbf{K}_{\dive}^{0}$.

Finally, we examine the multiplier-preserving construction established in
Theorem~\ref{thm:special-zonal-construction}. We consider the three Wendland
kernels $\varphi_{3,3}$, $\varphi_{5,3}$, and $\varphi_{7,3}$, which correspond to
increasing values of the dimension parameter $d$. Table~\ref{tab:kappadef}
lists explicit formulas for the scalar zonal kernels $\phi(t)$ and the
corresponding auxiliary functions $\psi(t)$ used to define the divergence-free
matrix-valued kernel
\[
\bfK_{\mathrm{div}}(\bx,\by)
=\psi'(t)\mathbf{Q}(\bx,\by)+\psi(t)\mathbf{R}(\bx,\by).
\]
As shown in Figure~\ref{fig.testkappaeigfu}, the computed smallest eigenvalues
remain positive for all tested node-set sizes and for all three kernels on
both the ME and Fibonacci node sets. This behavior is consistent with the
positive-definiteness property guaranteed by
Theorem~\ref{thm:special-zonal-construction}. Moreover, the smallest
eigenvalues decay approximately as $\mathcal{O}(q_X^{7})$, consistent with
the rate given in Corollary~\ref{cor:smallest-eigenvalue-rate}.

 \begin{figure}
 	\centering
 	\begin{tikzpicture}
 		\begin{groupplot}[
 			group style={
 				group size=1 by 1, 
 				horizontal sep=0pt, 
 				vertical sep=0pt,
 			},
 			width=4cm, height=5cm, 
 			scale only axis,           
 			xmode=log, ymode=log,
 			xmin=9e-3, xmax=0.18,
 			ymin=1e-11, ymax=1e-3,
 			grid=both,
 			major grid style={line width=0.2pt, draw=gray!40, dashed},
 			minor grid style={line width=0.1pt, draw=gray!15, dotted},
 			minor x tick num=3, minor y tick num=3,
 			ticklabel style={font=\tiny},
 			xlabel={$q_X$},
 			xlabel style={font=\small, yshift=5pt},
 			legend style={
 				at={(0.98,0.05)}, 
 				anchor=south east,
 				font=\fontsize{4}{5}\selectfont, 
 				cells={anchor=west},
 				legend columns=1,
 				inner sep=1pt,
 				outer sep=1pt,
 				draw=none, 
 				fill opacity=0.8,
 				text opacity=1
 			},
 			legend image post style={mark size=1.8pt}, 
 			cycle list={
 				{color={myred}, mark=triangle*, line width=0.8pt,mark size=1.5pt},
 				{color={myblue}, mark=square*, line width=0.8pt,mark size=1pt},
 				{color={mygreen}, mark=diamond*, line width=0.8pt,mark size=1.5pt},
 				{color={myred}, mark=triangle*, line width=0.8pt, mark size=1.5pt, dashed, mark options={solid}},
 				{color={myblue}, mark=square*, line width=0.8pt, mark size=1pt, dashed, mark options={solid}},
 				{color={mygreen}, mark=diamond*, line width=0.8pt, mark size=1.5pt, dashed, mark options={solid}},
 			},
 			]
			
 			\nextgroupplot[
 			ylabel={Minimum eigenvalue }, 
 			ylabel style={font=\small, xshift=6pt},
 			title={$\mathrm{MA}_{7/2}$, ME Nodes},
 			title style={font=\large, yshift=-3pt},
 			ytick={1e-11,1e-9,1e-7,1e-5,1e-3}, 
 			yticklabels={$10^{-11}$,$10^{-9}$,$10^{-7}$,$10^{-5}$,$10^{-3}$},]
			
 			\addplot table[x index=0, y index=1, col sep=space,] {data_surface/methoddiveig.txt};
 			\addlegendentry{$\bfK_\dive^{0}$}
			
 			\addplot table[x index=0, y index=2, col sep=space] {data_surface/methoddiveig.txt};
 			\addlegendentry{$\bfK_\dive^{1}$}
			
 			\addplot table[x index=0, y index=3, col sep=space] {data_surface/methoddiveig.txt};
 			\addlegendentry{$\bfK_\dive^{2}$}

 			\addplot[
 			domain=0.03:0.09,
 			samples=2,
 			color=black,
 			line width=0.5pt,  %
 			dashed,
 			forget plot,
 			] {4e3* x^9};
			
 			\node[anchor=south west, font=\tiny] at (axis cs:0.06,2e-8) {$q_X^{9}$};
			
 			\addplot[
 			domain=0.02:0.048,
 			samples=2,
 			color=black,
 			line width=0.5pt,  %
 			dashed,
 			forget plot,
 			] {1e3* x^5};
			
 			\node[anchor=south west, font=\tiny] at (axis cs:0.03,7e-5) {$q_X^{5}$};
 		\end{groupplot}
 	\end{tikzpicture}
 	\hspace*{0.5cm}
 	\begin{tikzpicture}
 		\begin{groupplot}[
 			group style={
 				group size=1 by 1, 
 				horizontal sep=0pt, 
 				vertical sep=0pt,
 			},
 			width=4cm, height=5cm, 
 			scale only axis,           
 			xmode=log, ymode=log,
 			xmin=9e-3, xmax=0.18,
 			ymin=1e-11, ymax=1e-3,
 			grid=both,
 			major grid style={line width=0.2pt, draw=gray!40, dashed},
 			minor grid style={line width=0.1pt, draw=gray!15, dotted},
 			minor x tick num=3, minor y tick num=3,
 			ticklabel style={font=\tiny},
 			xlabel={$q_X$},
 			xlabel style={font=\small, yshift=5pt},
 			legend style={
 				at={(0.98,0.05)}, 
 				anchor=south east,
 				font=\fontsize{4}{5}\selectfont, 
 				cells={anchor=west},
 				legend columns=1,
 				inner sep=1pt,
 				outer sep=1pt,
 				draw=none, 
 				fill opacity=0.8,
 				text opacity=1
 			},
 			legend image post style={mark size=1.8pt}, 
 			cycle list={
 				{color={myred}, mark=triangle*, line width=0.8pt,mark size=1.5pt},
 				{color={myblue}, mark=square*, line width=0.8pt,mark size=1pt},
 				{color={mygreen}, mark=diamond*, line width=0.8pt,mark size=1.5pt},
 				{color={myred}, mark=triangle*, line width=0.8pt, mark size=1.5pt, dashed, mark options={solid}},
 				{color={myblue}, mark=square*, line width=0.8pt, mark size=1pt, dashed, mark options={solid}},
 				{color={mygreen}, mark=diamond*, line width=0.8pt, mark size=1.5pt, dashed, mark options={solid}},
 			},
 			]
			
 			\nextgroupplot[
 			title={$\mathrm{MA}_{7/2}$, Fibonacci Nodes},
 			title style={font=\large, yshift=-3pt},
 			ytick={1e-11,1e-9,1e-7,1e-5,1e-3}, 
 			yticklabels={$10^{-11}$,$10^{-9}$,$10^{-7}$,$10^{-5}$,$10^{-3}$},]
			
 			\addplot table[x index=0, y index=1, col sep=space,] {data_surface/methoddiveigham.txt};
 			\addlegendentry{$\bfK_\dive^{0}$}
			
 			\addplot table[x index=0, y index=2, col sep=space,] {data_surface/methoddiveigham.txt};
 			\addlegendentry{$\bfK_\dive^{1}$}
			
 			\addplot table[x index=0, y index=3, col sep=space] {data_surface/methoddiveigham.txt};
 			\addlegendentry{$\bfK_\dive^{2}$}
			
 			\addplot[
 			domain=0.03:0.09,
 			samples=2,
 			color=black,
 			line width=0.5pt,  %
 			dashed,
 			forget plot,
 			] {4e3* x^9};
			
 			\node[anchor=south west, font=\tiny] at (axis cs:0.06,2e-8) {$q_X^{9}$};
			
 			\addplot[
 			domain=0.02:0.048,
 			samples=2,
 			color=black,
 			line width=0.5pt,  %
 			dashed,
 			forget plot,
 			] {1e3* x^5};
			
 			\node[anchor=south west, font=\tiny] at (axis cs:0.03,7e-5) {$q_X^{5}$};
			
 		\end{groupplot}
 	\end{tikzpicture}\\
 	\begin{tikzpicture}
 		\begin{groupplot}[
 			group style={
 				group size=1 by 1, 
 				horizontal sep=0pt, 
 				vertical sep=0pt,
 			},
 			width=4cm, height=5cm, 
 			scale only axis,           
 			xmode=log, ymode=log,
 			xmin=9e-3, xmax=0.18,
 			ymin=1e-11, ymax=2e-5,
 			grid=both,
 			major grid style={line width=0.2pt, draw=gray!40, dashed},
 			minor grid style={line width=0.1pt, draw=gray!15, dotted},
 			minor x tick num=3, minor y tick num=3,
 			ticklabel style={font=\tiny},
 			xlabel={$q_X$},
 			xlabel style={font=\small, yshift=5pt},
 			legend style={
 				at={(0.98,0.05)}, 
 				anchor=south east,
 				font=\fontsize{4}{5}\selectfont, 
 				cells={anchor=west},
 				legend columns=1,
 				inner sep=1pt,
 				outer sep=1pt,
 				draw=none, 
 				fill opacity=0.8,
 				text opacity=1
 			},
 			legend image post style={mark size=1.8pt}, 
 			cycle list={
 				{color={myred}, mark=triangle*, line width=0.8pt,mark size=1.5pt},
 				{color={myblue}, mark=square*, line width=0.8pt,mark size=1pt},
 				{color={mygreen}, mark=diamond*, line width=0.8pt,mark size=1.5pt},
 			},
 			]
			
 			\nextgroupplot[
 			ylabel={Minimum eigenvalue }, 
 			ylabel style={font=\small, xshift=6pt},
 			title={$\bfK_{\dive}^{0}$, ME Nodes},
 			title style={font=\large, yshift=-3pt},
 			ytick={1e-11,1e-9,1e-7,1e-5}, 
 			yticklabels={$10^{-11}$,$10^{-9}$,$10^{-7}$,$10^{-5}$},]
			
 			\addplot table[x index=0, y index=1, col sep=space,skip coords between index={7}{8}] {data_surface/testeig.txt};
 			\addlegendentry{$\varphi_{3,3}$}
			
 			\addplot table[x index=0, y index=2, col sep=space] {data_surface/testeig.txt};
 			\addlegendentry{$\varphi_{5,3}$}
			
 			\addplot table[x index=0, y index=3, col sep=space] {data_surface/testeig.txt};
 			\addlegendentry{$\varphi_{7,3}$}

 			\addplot[
 			domain=0.02:0.065,
 			samples=2,
 			color=black,
 			line width=0.5pt,  %
 			dashed,
 			forget plot,
 			] {5e5 * x^9};
			
 			\node[anchor=south west, font=\tiny] at (axis cs:0.033,3e-7) {$q_{X}^{9}$};
 		\end{groupplot}
 	\end{tikzpicture}
 	\hspace*{0.5cm}
 	\begin{tikzpicture}
 		\begin{groupplot}[
 			group style={
 				group size=1 by 1, 
 				horizontal sep=0pt, 
 				vertical sep=0pt,
 			},
 			width=4cm, height=5cm, 
 			scale only axis,           
 			xmode=log, ymode=log,
 			xmin=9e-3, xmax=0.18,
 			ymin=1e-11, ymax=2e-5,
 			grid=both,
 			major grid style={line width=0.2pt, draw=gray!40, dashed},
 			minor grid style={line width=0.1pt, draw=gray!15, dotted},
 			minor x tick num=3, minor y tick num=3,
 			ticklabel style={font=\tiny},
 			xlabel={$q_{X}$},
 			xlabel style={font=\small, yshift=5pt},
 			legend style={
 				at={(0.98,0.05)}, 
 				anchor=south east,
 				font=\fontsize{4}{5}\selectfont, 
 				cells={anchor=west},
 				legend columns=1,
 				inner sep=1pt,
 				outer sep=1pt,
 				draw=none, 
 				fill opacity=0.8,
 				text opacity=1
 			},
 			legend image post style={mark size=1.8pt}, 
 			cycle list={
 				{color={myred}, mark=triangle*, line width=0.8pt,mark size=1.5pt},
 				{color={myblue}, mark=square*, line width=0.8pt,mark size=1pt},
 				{color={mygreen}, mark=diamond*, line width=0.8pt,mark size=1.5pt},
 				{color={myred}, mark=triangle*, line width=0.8pt, mark size=1.5pt, dashed, mark options={solid}},
 				{color={myblue}, mark=square*, line width=0.8pt, mark size=1pt, dashed, mark options={solid}},
 				{color={mygreen}, mark=diamond*, line width=0.8pt, mark size=1.5pt, dashed, mark options={solid}},
 			},
 			]
			
 			\nextgroupplot[
 			title={$\bfK_{\dive}^{0}$, Fibonacci Nodes},
 			title style={font=\large, yshift=-3pt},
 			ytick={1e-11,1e-9,1e-7,1e-5}, 
 			yticklabels={$10^{-11}$,$10^{-9}$,$10^{-7}$,$10^{-5}$},]
			
 			\addplot table[x index=0, y index=1, col sep=space] {data_surface/testeigham.txt};
 			\addlegendentry{$\varphi_{3,3}$}
			
 			\addplot table[x index=0, y index=2, col sep=space] {data_surface/testeigham.txt};
 			\addlegendentry{$\varphi_{5,3}$}
			
 			\addplot table[x index=0, y index=3, col sep=space] {data_surface/testeigham.txt};
 			\addlegendentry{$\varphi_{7,3}$}
			
 			\addplot[
 			domain=0.02:0.065,
 			samples=2,
 			color=black,
 			line width=0.5pt,  %
 			dashed,
 			forget plot,
 			] {1e6 * x^9};
			
 			\node[anchor=south west, font=\tiny] at (axis cs:0.033,4e-7) {$q_{X}^{9}$};
 		\end{groupplot}
 	\end{tikzpicture}
 	\captionsetup{font=normalsize}
     \caption{Minimum eigenvalues of the divergence-free interpolation matrices. \textbf{Top}: $\mathbf{K}_{\dive}^{0}$, $\mathbf{K}_{\dive}^{1}$, $\mathbf{K}_{\dive}^{2}$ with \textbf{MA$_{7/2}$} on ME (left) and Fibonacci (right) nodes. \textbf{Bottom}: $\mathbf{K}_{\dive}^{0}$ with Wendland kernels $\varphi_{3,3}$, $\varphi_{5,3}$, $\varphi_{7,3}$ on ME (left) and Fibonacci (right) nodes.}

 \label{fig.field1diveig}
 \end{figure}

 \begin{table}[t]
 	\centering
 	\caption{Zonal kernels $\phi$ and corresponding auxiliary functions $\psi$
 		used in the multiplier-preserving construction. Here
 		$r:=\rho^{-1}\sqrt{2-2t}$ denotes the scaled chordal distance.}
 	\label{tab:kappadef}
 	\renewcommand{\arraystretch}{1.25}
 	\setlength{\tabcolsep}{8pt}
 	\small
 	\begin{tabular}{@{}c l@{}}
 		\toprule
 		\textbf{Kernel} & \multicolumn{1}{c}{\textbf{Definitions of $\phi$ and $\psi$}} \\
 		\midrule
		
 		$\mathrm{WE}_{3,3}$
 		&
 		$\begin{aligned}
 			\phi(t)
 			&=
 			\frac{78}{7\pi\rho^2}(1-r)_+^8
 			\left(32r^3+25r^2+8r+1\right), \\[3pt]
 			\psi(t)
 			&=
 			\frac{1+t}{4\pi(1-t^2)}
 			-\frac{(1-r)_+^9}{2\pi(1-t^2)}
 			\left(
 			\frac{384}{7}r^4+\frac{453}{7}r^3
 			+\frac{237}{7}r^2+9r+1
 			\right).
 		\end{aligned}$
 		\\
 		\addlinespace[5pt]
 		\midrule
 		\addlinespace[5pt]
		
 		$\mathrm{WE}_{5,3}$
 		&
 		$\begin{aligned}
 			\phi(t)
 			&=
 			\frac{13}{15\pi\rho^2}(1-r)_+^9
 			\left(693r^3+477r^2+135r+15\right), \\[3pt]
 			\psi(t)
 			&=
 			\frac{1+t}{4\pi(1-t^2)}
 			-\frac{(1-r)_+^{10}}{2\pi(1-t^2)}
 			\left(
 			\frac{429}{5}r^4+90r^3
 			+42r^2+10r+1
 			\right).
 		\end{aligned}$
 		\\
 		\addlinespace[5pt]
 		\midrule
 		\addlinespace[5pt]
		
 		$\mathrm{WE}_{7,3}$
 		&
 		$\begin{aligned}
 			\phi(t)
 			&=
 			\frac{1}{\pi\rho^2}(1-r)_+^{10}
 			\left(960r^3+591r^2+150r+15\right), \\[3pt]
 			\psi(t)
 			&=
 			\frac{1+t}{4\pi(1-t^2)}
 			-\frac{(1-r)_+^{11}}{2\pi(1-t^2)}
 			\left(
 			128r^4+121r^3+51r^2+11r+1
 			\right).
 		\end{aligned}$
 		\\
		
 		\bottomrule
 	\end{tabular}
 \end{table}

 \begin{figure}
 	\centering
 	\begin{tikzpicture}
 		\begin{groupplot}[
 			group style={
 				group size=1 by 1, 
 				horizontal sep=0pt, 
 				vertical sep=0pt,
 			},
 			width=4cm, height=5cm, 
 			scale only axis,           
 			xmode=log, ymode=log,
 			xmin=0.008, xmax=0.2,
 			ymin=2e-9, ymax=4e-4,
 			grid=both,
 			major grid style={line width=0.2pt, draw=gray!40, dashed},
 			minor grid style={line width=0.1pt, draw=gray!15, dotted},
 			minor x tick num=3, minor y tick num=3,
 			ticklabel style={font=\tiny},
 			xlabel={$q_X$},
 			xlabel style={font=\small, yshift=5pt},
 			legend style={
 				at={(0.98,0.05)}, 
 				anchor=south east,
 				font=\fontsize{4}{5}\selectfont, 
 				cells={anchor=west},
 				legend columns=1,
 				inner sep=1pt,
 				outer sep=1pt,
 				draw=none, 
 				fill opacity=0.8,
 				text opacity=1
 			},
 			legend image post style={mark size=1.8pt}, 
 			cycle list={
 				{color={myred}, mark=triangle*, line width=0.8pt,mark size=1.5pt},
 				{color={myblue}, mark=square*, line width=0.8pt,mark size=1pt},
 				{color={mygreen}, mark=diamond*, line width=0.8pt,mark size=1.5pt},
 			},
 			]
			
 			\nextgroupplot[
 			ylabel={Minimum eigenvalue }, 
 			ylabel style={font=\small, xshift=6pt},
 			title={ME Nodes},
 			title style={font=\large, yshift=-3pt},
 			ytick={1e-8,1e-6,1e-4}, 
 			yticklabels={$10^{-8}$,$10^{-6}$,$10^{-4}$,}]
			
 			\addplot table[x index=0, y index=1, col sep=space] {data_surface/testkapaeig.txt};
 			\addlegendentry{$\varphi_{3,3}$}
			
 			\addplot table[x index=0, y index=2, col sep=space] {data_surface/testkapaeig.txt};
 			\addlegendentry{$\varphi_{5,3}$}
			
 			\addplot table[x index=0, y index=3, col sep=space] {data_surface/testkapaeig.txt};
 			\addlegendentry{$\varphi_{7,3}$}

 			\addplot[
 			domain=0.02:0.06,
 			samples=2,
 			color=black,
 			line width=0.5pt,  %
 			dashed,
 			forget plot,
 			] {5e4 * x^7};
			
 			\node[anchor=south west, font=\tiny] at (axis cs:0.028,8e-6) {$q_X^{7}$};
 		\end{groupplot}
 	\end{tikzpicture}
 	\hspace*{0.5cm}
 	\begin{tikzpicture}
 		\begin{groupplot}[
 			group style={
 				group size=1 by 1, 
 				horizontal sep=0pt, 
 				vertical sep=0pt,
 			},
 			width=4cm, height=5cm, 
 			scale only axis,           
 			xmode=log, ymode=log,
 			xmin=0.008, xmax=0.2,
 			ymin=2e-9, ymax=4e-4,
 			grid=both,
 			major grid style={line width=0.2pt, draw=gray!40, dashed},
 			minor grid style={line width=0.1pt, draw=gray!15, dotted},
 			minor x tick num=3, minor y tick num=3,
 			ticklabel style={font=\tiny},
 			xlabel={$q_X$},
 			xlabel style={font=\small, yshift=5pt},
 			legend style={
 				at={(0.98,0.05)}, 
 				anchor=south east,
 				font=\fontsize{4}{5}\selectfont, 
 				cells={anchor=west},
 				legend columns=1,
 				inner sep=1pt,
 				outer sep=1pt,
 				draw=none, 
 				fill opacity=0.8,
 				text opacity=1
 			},
 			legend image post style={mark size=1.8pt}, 
 			cycle list={
 				{color={myred}, mark=triangle*, line width=0.8pt,mark size=1.5pt},
 				{color={myblue}, mark=square*, line width=0.8pt,mark size=1pt},
 				{color={mygreen}, mark=diamond*, line width=0.8pt,mark size=1.5pt},
 				{color={myred}, mark=triangle*, line width=0.8pt, mark size=1.5pt, dashed, mark options={solid}},
 				{color={myblue}, mark=square*, line width=0.8pt, mark size=1pt, dashed, mark options={solid}},
 				{color={mygreen}, mark=diamond*, line width=0.8pt, mark size=1.5pt, dashed, mark options={solid}},
 			},
 			]
			
 			\nextgroupplot[
 			title={Fibonacci Nodes},
 			title style={font=\large, yshift=-3pt},
 			ytick={1e-8,1e-6,1e-4}, 
 			yticklabels={$10^{-8}$,$10^{-6}$,$10^{-4}$,}]
			
 			\addplot table[x index=0, y index=1, col sep=space] {data_surface/testkapaeigham.txt};
 			\addlegendentry{$\varphi_{3,3}$}
			
 			\addplot table[x index=0, y index=2, col sep=space,] {data_surface/testkapaeigham.txt};
 			\addlegendentry{$\varphi_{5,3}$}
			
 			\addplot table[x index=0, y index=3, col sep=space] {data_surface/testkapaeigham.txt};
 			\addlegendentry{$\varphi_{7,3}$}

 			\addplot[
 			domain=0.02:0.06,
 			samples=2,
 			color=black,
 			line width=0.5pt,  %
 			dashed,
 			forget plot,
 			] {5e4 * x^7};
			
 			\node[anchor=south west, font=\tiny] at (axis cs:0.028,8e-6) {$q_X^{7}$};
 		\end{groupplot}
 	\end{tikzpicture}
	
 	\captionsetup{font=normalsize}
 	\caption{Minimum eigenvalues of the multiplier-preserving divergence-free interpolation matrices
 		$\mathbf{K}_{\mathrm{div}}(\mathbf{x},\mathbf{y})
 		=\psi'(t)\mathbf{Q}(\mathbf{x},\mathbf{y})
 		+\psi(t)\mathbf{R}(\mathbf{x},\mathbf{y})$
 		constructed from the Wendland kernels $\varphi_{3,3}$, $\varphi_{5,3}$, and $\varphi_{7,3}$ with the common shape parameter $\varepsilon=2$. Results are shown for ME nodes (left) and Fibonacci nodes (right).}
 	\label{fig.testkappaeigfu}
 \end{figure}

\subsection{Simulations on various surfaces}

To illustrate the geometric flexibility of the proposed divergence-free
interpolation framework on surfaces other than the sphere, we consider three
embedded surfaces with different geometric and topological characteristics: a
torus, a red blood cell (RBC) surface, and a bumpy sphere. Detailed definitions
of these surfaces and their associated divergence-free target fields are
provided in Appendix~\ref{sec:def_surfaces}.


For the torus, the interpolation set comprises $N=1600$ Fibonacci-type nodes
generated using a modified golden-section construction. The error is evaluated
on an independent set of $M=5000$ nodes. We construct the matrix-valued kernels
from the Mat\'ern kernel $\mathrm{MA}_{7/2}$ with shape parameter
$\varepsilon=7/3$. Figure~\ref{fig:torus_field}(a) shows the stream function
$s_3$, while Figure~\ref{fig:torus_field}(b) shows the divergence-free
interpolant obtained using $\mathbf{K}_{\dive}^{0}$. The interpolated vectors
remain tangent to the torus and capture the oscillatory structure induced by
the stream function.

For the RBC surface, the interpolation centers are generated by radially
projecting ME nodes onto the surface. We use $N=1600$
interpolation centers and an evaluation set of $M=8100$ projected
ME nodes. Figure~\ref{fig:torus_field}(c) shows the stream function $s_4$,
while Figure~\ref{fig:torus_field}(d) shows the divergence-free interpolant
constructed using the proposed multiplier-preserving kernel
$\mathbf{K}_{\dive}^{0}$.

For the bumpy sphere, we map Fibonacci nodes onto $\Gamma_{\mathrm B}$ using the radial perturbation
defined in Appendix~\ref{sec:def_surfaces}. We use $N=3200$ interpolation
centers and an evaluation set of $M=8100$ nodes. The matrix-valued
kernels are constructed from the Mat\'ern kernel $\mathrm{MA}_{7/2}$ with shape
parameter $\varepsilon=5$. Figure~\ref{fig:torus_field}(e) shows the stream
function $s_5(x,y,z)=xy+yz+zx$, while Figure~\ref{fig:torus_field}(f) shows
the divergence-free interpolant obtained using $\mathbf{K}_{\dive}^{0}$. The
interpolated vectors remain tangent to the bumpy surface and capture the
complex variations induced by the triaxial sinusoidal perturbations. These
results illustrate the applicability of the proposed kernel construction to
surfaces with rapidly varying curvature.


Figure~\ref{fig.surfacefielddiv} shows the convergence behavior of the
divergence-free interpolants on the torus, the RBC surface, and the bumpy
sphere. On the torus and the RBC surface, the errors demonstrate superconvergent
decay. In contrast, convergence is slower on the bumpy sphere. This difference
may reflect the greater complexity of the associated native space arising from
the surface's spatially varying curvature. A detailed analysis of the
relationship between curvature, native-space structure, and convergence is
left for future work.

\begin{figure}
	\centering
	\begin{tikzpicture}
		\matrix (M) [matrix of nodes, nodes={inner sep=0pt, anchor=center}, column sep=0.5cm] {
			\includegraphics[width=0.45\textwidth, trim=2.4cm 2.7cm 2.2cm 2cm, clip]{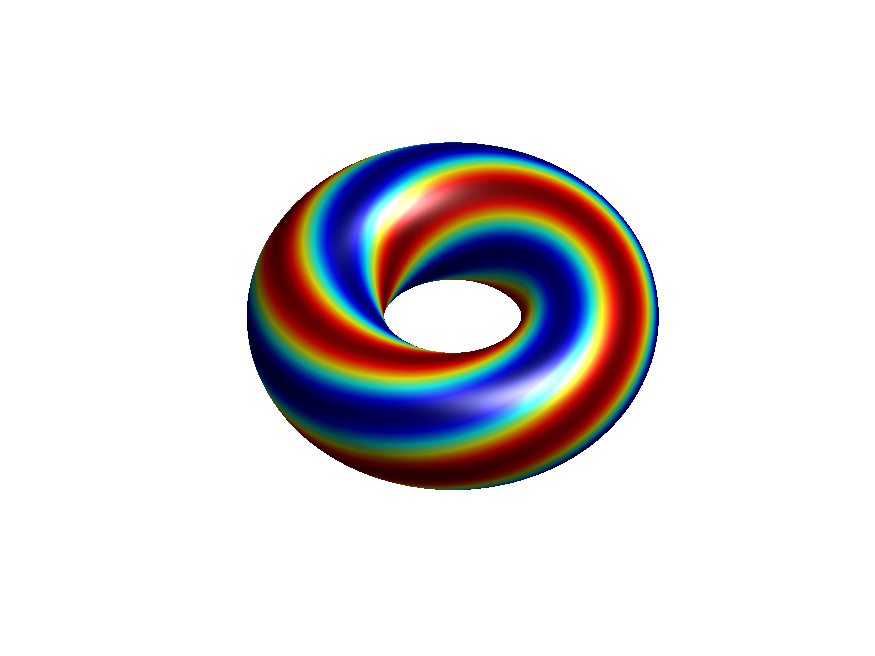} &
			\includegraphics[width=0.45\textwidth, trim=2.4cm 2.7cm 2.2cm 2cm, clip]{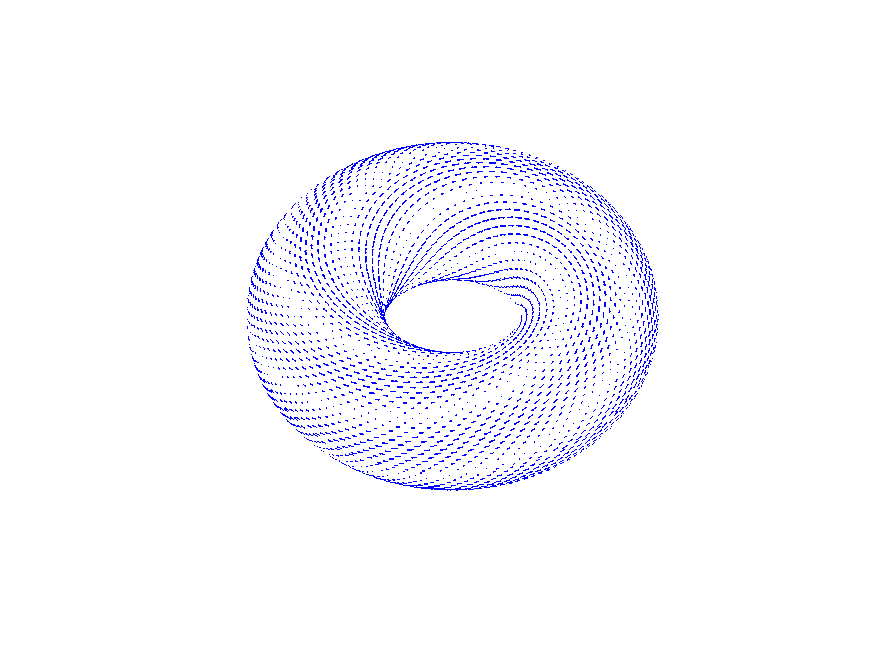} \\
		};
		\node [anchor=north, yshift=-2mm] at (M-1-1.south) {\small (a) Stream function $s_3$};
		\node [anchor=north, yshift=-2mm] at (M-1-2.south) {\small (b) Reconstructed vector field};
	\end{tikzpicture}
	\\
	\begin{tikzpicture}
		\matrix (M) [matrix of nodes, nodes={inner sep=0pt, anchor=center}, column sep=0.5cm] {
			\includegraphics[width=0.45\textwidth, trim=0cm 1.3cm 0cm 1.1cm, clip]{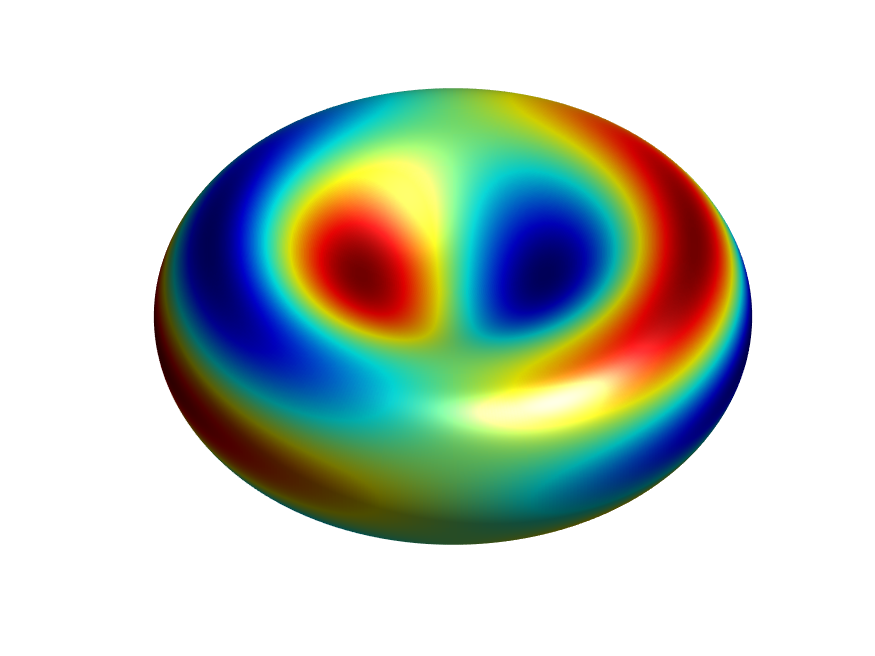} &
			\includegraphics[width=0.45\textwidth, trim=0cm 1.3cm 0cm 1.1cm, clip]{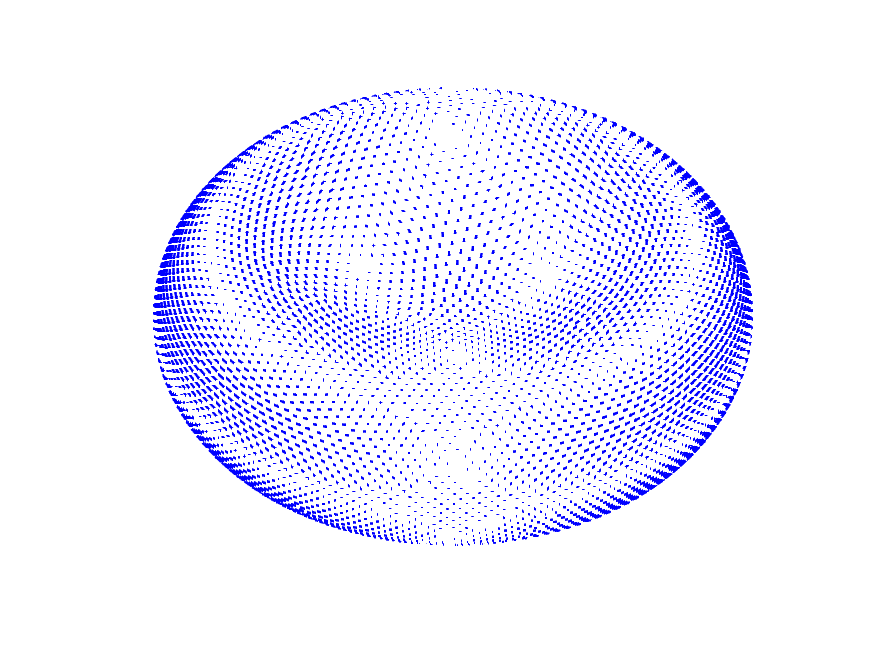} \\
		};
		\node [anchor=north, yshift=-2mm] at (M-1-1.south) {\small (c) Stream function $s_4$};
		\node [anchor=north, yshift=-2mm] at (M-1-2.south) {\small (d) Reconstructed vector field};
	\end{tikzpicture}\\
	\begin{tikzpicture}
			\matrix (M) [matrix of nodes, nodes={inner sep=0pt, anchor=center}, column sep=0.5cm] {
				\includegraphics[width=0.45\textwidth, trim=2.5cm 2.3cm 2.6cm 1.8cm, clip]{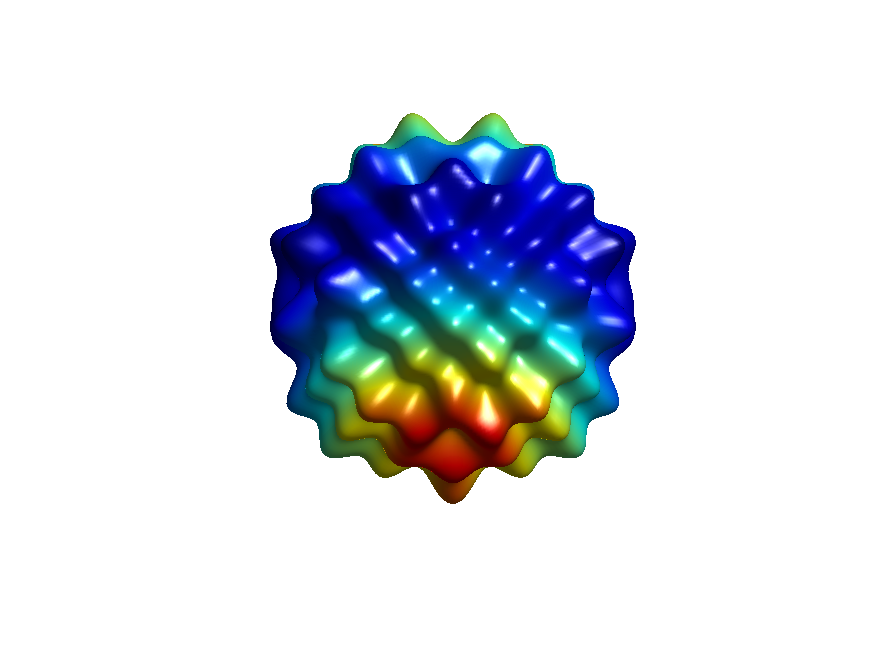} &
				\includegraphics[width=0.45\textwidth, trim=3.5cm 3cm 3.2cm 2.2cm, clip]{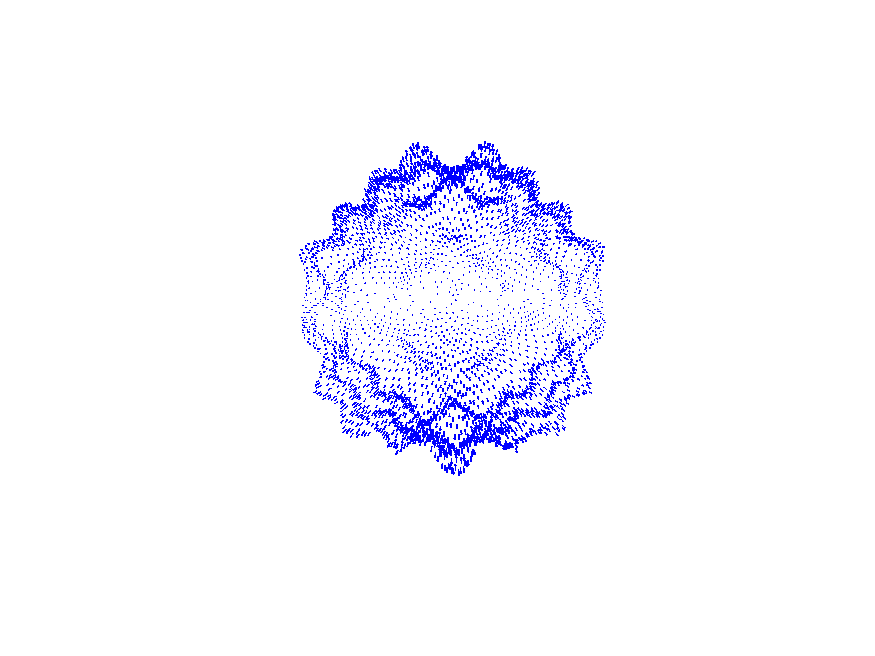} \\
			};
			\node [anchor=north, yshift=-2mm] at (M-1-1.south) {\small (e) Stream function $s_5$};
			\node [anchor=north, yshift=-2mm] at (M-1-2.south) {\small (f) Reconstructed vector field};
		\end{tikzpicture}
	\caption{Stream functions and reconstructed divergence-free vector fields on three embedded surfaces: torus, red blood cell and bumpy sphere. All reconstructions are obtained via $\mathbf{K}_{\dive}^{0}$ .}

	\label{fig:torus_field}
\end{figure}

\begin{figure}
	\centering
	\begin{tikzpicture}
		\begin{groupplot}[
			group style={
				group size=1 by 1, 
				horizontal sep=0pt, 
				vertical sep=0pt,
			},
			width=3cm, height=4cm, 
			scale only axis,           
			xmode=log, ymode=log,
			xmin=8e-3, xmax=0.15,
			ymin=1e-10, ymax=1e-1,
			grid=both,
			major grid style={line width=0.2pt, draw=gray!40, dashed},
			minor grid style={line width=0.1pt, draw=gray!15, dotted},
			minor x tick num=3, minor y tick num=3,
			ticklabel style={font=\tiny},
			xlabel={$h_X$},
			xlabel style={font=\small, yshift=5pt},
			legend style={
				at={(0.98,0.05)}, 
				anchor=south east,
				font=\fontsize{4}{5}\selectfont, 
				cells={anchor=west},
				legend columns=1,
				inner sep=1pt,
				outer sep=1pt,
				draw=none, 
				fill opacity=0.8,
				text opacity=1
			},
			legend image post style={mark size=1.8pt}, 
			cycle list={
				{color={myred}, mark=triangle*, line width=0.8pt,mark size=1.5pt},
				{color={myblue}, mark=square*, line width=0.8pt,mark size=1pt},
				{color={mygreen}, mark=diamond*, line width=0.8pt,mark size=1.5pt},
				{color={myred}, mark=triangle*, line width=0.8pt, mark size=1.5pt, dashed, mark options={solid}},
				{color={myblue}, mark=square*, line width=0.8pt, mark size=1pt, dashed, mark options={solid}},
				{color={mygreen}, mark=diamond*, line width=0.8pt, mark size=1.5pt, dashed, mark options={solid}},
			},
			]
			
			\nextgroupplot[
			ylabel={Relative $\ell_2$ error}, 
			ylabel style={font=\small, xshift=6pt},
			title={Torus},
			title style={font=\large, yshift=-3pt},
			ytick={1e-9,1e-7,1e-5,1e-3,1e-1}, 
			yticklabels={$10^{-9}$,$10^{-7}$,$10^{-5}$,$10^{-3}$,$10^{-1}$},]
			
			\addplot table[x index=0, y index=1, col sep=space,] {data_surface/methodtorusdiverrork0.txt};
			\addlegendentry{$\bfK_\dive^{0}$}
			
			\addplot table[x index=0, y index=1, col sep=space] {data_surface/methodtorusdiverrork1.txt};
			\addlegendentry{$\bfK_\dive^{1}$}
			
			\addplot table[x index=0, y index=1, col sep=space] {data_surface/methodtorusdiverrork2.txt};
			\addlegendentry{$\bfK_\dive^{2}$}

			\addplot[
			domain=0.017:0.08,
			samples=2,
			color=black,
			line width=0.5pt,  %
			dashed,
			forget plot,
			] {4e9* x^11};
			
			\node[anchor=south west, font=\tiny] at (axis cs:0.05,6e-6) {$h_X^{11}$};
			
			\addplot[
			domain=0.013:0.05,
			samples=2,
			color=black,
			line width=0.5pt,  %
			dashed,
			forget plot,
			] {7e7* x^7};
			
			\node[anchor=south west, font=\tiny] at (axis cs:0.03,7e-3) {$h_X^{7}$};
		\end{groupplot}
	\end{tikzpicture}
	\hspace*{0.1cm}
	\begin{tikzpicture}
		\begin{groupplot}[
			group style={
				group size=1 by 1, 
				horizontal sep=0pt, 
				vertical sep=0pt,
			},
			width=3cm, height=4cm, 
			scale only axis,           
			xmode=log, ymode=log,
			xmin=8e-3, xmax=0.15,
			ymin=1e-10, ymax=1e-1,
			grid=both,
			major grid style={line width=0.2pt, draw=gray!40, dashed},
			minor grid style={line width=0.1pt, draw=gray!15, dotted},
			minor x tick num=3, minor y tick num=3,
			ticklabel style={font=\tiny},
			xlabel={$h_X$},
			xlabel style={font=\small, yshift=5pt},
			legend style={
				at={(0.98,0.05)}, 
				anchor=south east,
				font=\fontsize{4}{5}\selectfont, 
				cells={anchor=west},
				legend columns=1,
				inner sep=1pt,
				outer sep=1pt,
				draw=none, 
				fill opacity=0.8,
				text opacity=1
			},
			legend image post style={mark size=1.8pt}, 
			cycle list={
				{color={myred}, mark=triangle*, line width=0.8pt,mark size=1.5pt},
				{color={myblue}, mark=square*, line width=0.8pt,mark size=1pt},
				{color={mygreen}, mark=diamond*, line width=0.8pt,mark size=1.5pt},
				{color={myred}, mark=triangle*, line width=0.8pt, mark size=1.5pt, dashed, mark options={solid}},
				{color={myblue}, mark=square*, line width=0.8pt, mark size=1pt, dashed, mark options={solid}},
				{color={mygreen}, mark=diamond*, line width=0.8pt, mark size=1.5pt, dashed, mark options={solid}},
			},
			]
			
			\nextgroupplot[
			ylabel={Relative $\ell_2$ error}, 
			ylabel style={font=\small, xshift=6pt},
			title={Red blood cell},
			title style={font=\large, yshift=-3pt},
			ytick={1e-9,1e-7,1e-5,1e-3,1e-1}, 
			yticklabels={$10^{-9}$,$10^{-7}$,$10^{-5}$,$10^{-3}$,$10^{-1}$},]
			
			\addplot table[x index=0, y index=1, col sep=space,] {data_surface/methodrbcdiverrork0.txt};
			\addlegendentry{$\bfK_\dive^{0}$}
			
			\addplot table[x index=0, y index=1, col sep=space] {data_surface/methodrbcdiverrork1.txt};
			\addlegendentry{$\bfK_\dive^{1}$}
			
			\addplot table[x index=0, y index=1, col sep=space] {data_surface/methodrbcdiverrork2.txt};
			\addlegendentry{$\bfK_\dive^{2}$}

			\addplot[
			domain=0.017:0.08,
			samples=2,
			color=black,
			line width=0.5pt,  %
			dashed,
			forget plot,
			] {4e9* x^11};
			
			\node[anchor=south west, font=\tiny] at (axis cs:0.05,6e-6) {$h_X^{11}$};
			
			\addplot[
			domain=0.013:0.05,
			samples=2,
			color=black,
			line width=0.5pt,  %
			dashed,
			forget plot,
			] {7e7* x^7};
			
			\node[anchor=south west, font=\tiny] at (axis cs:0.03,7e-3) {$h_X^{7}$};
		\end{groupplot}
	\end{tikzpicture}
	\hspace*{0.1cm}
	\begin{tikzpicture}
		\begin{groupplot}[
			group style={
				group size=1 by 1, 
				horizontal sep=0pt, 
				vertical sep=0pt,
			},
			width=3cm, height=4cm, 
			scale only axis,           
			xmode=log, ymode=log,
			xmin=8e-3, xmax=0.15,
			ymin=2e-6, ymax=1e-1,
			grid=both,
			major grid style={line width=0.2pt, draw=gray!40, dashed},
			minor grid style={line width=0.1pt, draw=gray!15, dotted},
			minor x tick num=3, minor y tick num=3,
			ticklabel style={font=\tiny},
			xlabel={$h_X$},
			xlabel style={font=\small, yshift=5pt},
			legend style={
				at={(0.98,0.05)}, 
				anchor=south east,
				font=\fontsize{4}{5}\selectfont, 
				cells={anchor=west},
				legend columns=1,
				inner sep=1pt,
				outer sep=1pt,
				draw=none, 
				fill opacity=0.8,
				text opacity=1
			},
			legend image post style={mark size=1.8pt}, 
			cycle list={
				{color={myred}, mark=triangle*, line width=0.8pt,mark size=1.5pt},
				{color={myblue}, mark=square*, line width=0.8pt,mark size=1pt},
				{color={mygreen}, mark=diamond*, line width=0.8pt,mark size=1.5pt},
				{color={myred}, mark=triangle*, line width=0.8pt, mark size=1.5pt, dashed, mark options={solid}},
				{color={myblue}, mark=square*, line width=0.8pt, mark size=1pt, dashed, mark options={solid}},
				{color={mygreen}, mark=diamond*, line width=0.8pt, mark size=1.5pt, dashed, mark options={solid}},
			},
			]
			
			\nextgroupplot[
			ylabel={Relative $\ell_2$ error}, 
			ylabel style={font=\small, xshift=6pt},
			title={Bumpy sphere},
			title style={font=\large, yshift=-3pt},
			ytick={1e-5,1e-3,1e-1}, 
			yticklabels={$10^{-5}$,$10^{-3}$,$10^{-1}$},]
			
			\addplot table[x index=0, y index=1, col sep=space,] {data_surface/methodbumpydiverr.txt};
			\addlegendentry{$\bfK_\dive^{0}$}
			
			\addplot table[x index=0, y index=2, col sep=space] {data_surface/methodbumpydiverr.txt};
			\addlegendentry{$\bfK_\dive^{1}$}
			
			\addplot table[x index=0, y index=3, col sep=space] {data_surface/methodbumpydiverr.txt};
			\addlegendentry{$\bfK_\dive^{2}$}

			\addplot[
			domain=0.011:0.045,
			samples=2,
			color=black,
			line width=0.5pt,  %
			dashed,
			forget plot,
			] {4e3* x^3.5};
			
			\node[anchor=south west, font=\tiny] at (axis cs:0.02,2e-2) {$h_X^{3.5}$};
			
			\addplot[
			domain=0.019:0.09,
			samples=2,
			color=black,
			line width=0.5pt,  %
			dashed,
			forget plot,
			] {7e3* x^5.5};
			
			\node[anchor=south west, font=\tiny] at (axis cs:0.06,6e-4) {$h_X^{5.5}$};
		\end{groupplot}
	\end{tikzpicture}		
	\captionsetup{font=normalsize}
	\caption{Relative $\ell_2$ errors of the divergence-free approximation on three different surfaces using the restricted \textbf{MA$_{7/2}$} kernel. }
	\label{fig.surfacefielddiv}
\end{figure}

\section{Conclusion}
We have developed a general framework for constructing divergence-free kernels on embedded surfaces. The proposed construction incorporates the surface geometry through normal-dependent cross-product operators while retaining flexibility in the choice of the underlying scalar kernel. In particular, it avoids the repeated application of surface differential operators required by classical potential-based constructions.
On the unit sphere, the vector spherical harmonic representation provides an explicit characterization of the kernel multipliers, positive definiteness, and associated native spaces. We have also established a multiplier-preserving construction for which positive definiteness and native space equivalence follow directly from the Fourier coefficients of the underlying zonal kernel. 

Under an appropriate multiplier decay condition, we derived a stability estimate for the interpolation matrix together with a corresponding condition number bound. We further established pointwise and fractional-order Sobolev error estimates for target fields with both native space and lower regularity. For targets smoother than the native space, a Hilbert-scale argument yields superconvergence estimates that quantify the additional accuracy achieved by the proposed constructions.
The stability and convergence analysis developed in this work relies on the spherical harmonic structure of $\mathbb{S}^2$, whereas the experiments on more general surfaces provide numerical evidence of the broader applicability of the framework. Natural directions for future research include extending the multiplier analysis and error theory to general compact surfaces, establishing complete positive-definiteness criteria for lower-order constructions, and developing scalable preconditioners and localized implementations.



\bibliographystyle{plain}
\bibliography{reference}

\bigskip
\appendix
\section{Proof of Lemma~\ref{lem:filtered-localization}}

To prove Lemma~\ref{lem:filtered-localization}, we first recall a result about uniform filtered Jacobi localization.

\begin{lemma}
\label{lem:uniform-jacobi-localization}
Let $k\in\mathbb N_0$, $0<c_3<c_4<\infty$, and $M>0$.
For the standard Jacobi polynomials $P_\ell^{(k,k)}$, define
\[
\mathfrak h_\ell^{(k)}
:=
\int_{-1}^1
|P_\ell^{(k,k)}(t)|^2(1-t)^k(1+t)^k\,\mathrm{d}t.
\]
Let $J=3\lceil M\rceil-1$. Suppose that
$\{\chi_L\}_{L\ge1}\subset C_c^\infty([0,\infty))$ satisfies
\[
\operatorname{supp}\chi_L\subset[c_3,c_4],
\quad
\sup_{L\ge1}\max_{0\le i\le J}
\|\chi_L^{(i)}\|_{L^\infty}<\infty.
\]
Then there exists a constant $C>0$ such that
\[
\left|
\sum_{\ell=0}^\infty
\chi_L\!\left(\frac{\ell}{L}\right)
\frac{P_\ell^{(k,k)}(1)}{\mathfrak h_\ell^{(k)}}
P_\ell^{(k,k)}(\cos\theta)
\right|
\le
C L^{2k+2}(1+L\theta)^{-M}
\]
for $0\le\theta\le\pi$ and $L\ge1$.
\end{lemma}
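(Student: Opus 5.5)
The plan is the classical summation-by-parts argument for filtered Jacobi kernels, as in \cite[Theorem~2.6.7]{Dai_2013book_approximation}. Throughout I track that every constant depends on $\chi_L$ only through $c_3$, $c_4$ and $\sup_L\max_{i\le J}\|\chi_L^{(i)}\|_{L^\infty}$; this is what makes the bound uniform in $L$. Write $K_L(\theta)$ for the sum in the statement and $a_\ell:=\chi_L(\ell/L)$. Then $a_\ell=0$ unless $c_3L\le\ell\le c_4L$. By the mean value theorem, the $i$-th forward difference satisfies $|\Delta^i a_\ell|\le \|\chi_L^{(i)}\|_{L^\infty}L^{-i}$ uniformly in $L$.

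\emph{Step 1 (near region $\theta\le 1/L$).} Use $|P_\ell^{(k,k)}(\cos\theta)|\le P_\ell^{(k,k)}(1)\asymp \ell^k$ and $\mathfrak h_\ell^{(k)}\asymp \ell^{-1}$. This gives $|K_L(\theta)|\le C\sum_{\ell\le c_4L}\ell^{2k+1}\le CL^{2k+2}$, which matches the claim because $(1+L\theta)^{-M}\ge 2^{-M}$ on this range.

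\emph{Step 2 (summation by parts for $1/L\le\theta\le\pi/2$).} I will use the Christoffel--Darboux-type identity
\[
\sum_{j=0}^{\ell}\frac{P_j^{(\alpha,\beta)}(1)}{\mathfrak h_j^{(\alpha,\beta)}}P_j^{(\alpha,\beta)}(t)=c_{\alpha,\beta}\,\frac{P_\ell^{(\alpha+1,\beta)}(1)}{\mathfrak h_\ell^{(\alpha+1,\beta)}}P_\ell^{(\alpha+1,\beta)}(t)
\]
(with the weighted normalizations, up to an explicit constant). Summing by parts $i$ times gives
\[
K_L(\theta)=\sum_\ell \Delta^i a_\ell\,\frac{P_\ell^{(k+i,k)}(1)}{\mathfrak h_\ell^{(k+i,k)}}P_\ell^{(k+i,k)}(\cos\theta),
\]
and the sum is still supported in $\ell\asymp L$. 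Next I insert the standard pointwise bound $|P_\ell^{(\alpha,\beta)}(\cos\theta)|\le C\ell^{-1/2}\theta^{-\alpha-1/2}$ for $1/\ell\le\theta\le\pi/2$. With $P_\ell^{(k+i,k)}(1)/\mathfrak h_\ell\asymp\ell^{k+i+1}$, each term is bounded by $L^{-i}\cdot L^{k+i+1/2}\theta^{-k-i-1/2}$. Summing over $\asymp L$ terms yields
\[
|K_L(\theta)|\le CL^{2k+2}(L\theta)^{-k-i-1/2}.
\]
Choosing $i$ with $k+i+1/2\ge M$ gives the decay. The number of derivatives actually used, $i\le J=3\lceil M\rceil-1$, is what is needed here.

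\emph{Step 3 (far hemisphere $\theta\in[\pi/2,\pi]$).} Here $(1+L\theta)^{-M}\asymp L^{-M}$, and the bound of Step 2 degenerates because the pointwise Jacobi estimate carries a factor $(\pi-\theta)^{-\beta-1/2}$. I would use the symmetry $P_\ell^{(k,k)}(-t)=(-1)^\ell P_\ell^{(k,k)}(t)$ to rewrite $K_L(\theta)$ as an alternating sum at $\pi-\theta$. Splitting $\ell$ into even and odd parts turns it into the difference of two filtered kernels with filters $\chi_L(2m/L)$ and $\chi_L((2m+1)/L)$. Alternatively, one can apply summation by parts directly with the alternating sign, using differences of the form $a_\ell-a_{\ell+1}$ paired with the sign pattern. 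Either way, after $i$ steps the terms are bounded by $L^{-i}$ times the crude bound $\ell^{2k+1}$, giving $CL^{2k+2-i}$, which suffices once $i\ge M$. The point $\theta$ near $\pi$ is included, since only the crude bound $|P_\ell(\cos\theta)|\le P_\ell(1)$ is used.

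\emph{Main obstacle.} I expect Step 3 to be the main obstacle, together with the exact form of the Christoffel--Darboux identity after repeated raising of $\alpha$, which must keep the coefficient growth exactly $\ell^{k+i+1}$. I would first try the even/odd splitting, because it reduces the problem to Steps 1 and 2 with modified filters whose derivative bounds are still uniform in $L$.
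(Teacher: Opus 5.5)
\textbf{Comparison with the paper.} The paper does not reprove this estimate. It rescales the filter to $\varphi_L(u)=\chi_L(b_Lu)$ with $N_L=\lceil bL\rceil$, so that $\varphi_L(\ell/N_L)=\chi_L(\ell/L)$ and $\operatorname{supp}\varphi_L\subset[0,1]$. It then identifies the sum with the Dai--Xu kernel $G^{(k,k)}_{N_L}$ and applies their Theorem~2.6.7. The only things it checks are that $\|\varphi_L^{(3\lceil M\rceil-1)}\|_{L^\infty}$ is bounded uniformly in $L$ and that $N_L\asymp L$.

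Your summation-by-parts plan is the classical mechanism behind that theorem, and your Step~1 is fine. As written, however, Steps~2 and~3 do not go through.

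\textbf{Step 2.} The identity you display is false. At $t=1$ its left side is
$\sum_{j\le\ell}P_j^{(\alpha,\beta)}(1)^2/\mathfrak h_j^{(\alpha,\beta)}\asymp\ell^{2\alpha+2}$, while $P_\ell^{(\alpha+1,\beta)}(1)^2/\mathfrak h_\ell^{(\alpha+1,\beta)}\asymp\ell^{2\alpha+3}$. The correct identity (Szeg\H{o}, \S4.5) is
\[
\sum_{j=0}^{\ell}\frac{(2j+\alpha+\beta+1)\,\Gamma(j+\alpha+\beta+1)}{\Gamma(j+\beta+1)}P_j^{(\alpha,\beta)}(t)=\frac{\Gamma(\ell+\alpha+\beta+2)}{\Gamma(\ell+\beta+1)}P_\ell^{(\alpha+1,\beta)}(t).
\]
Up to a constant, the right-hand coefficient is the weighted one divided by $2\ell+\alpha+\beta+2$, so it is $\asymp\ell^{\alpha+1}$, not $\ell^{\alpha+2}$.

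Your arithmetic is also inconsistent with your own ingredients. Summing gives $L\cdot L^{-i}\cdot L^{k+i+1/2}\theta^{-k-i-1/2}=L^{k+3/2}\theta^{-k-i-1/2}$. This is $L^{i}$ larger than the claimed $L^{2k+2}(L\theta)^{-k-i-1/2}$ and does not decay at fixed $\theta$.

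The two slips compensate, so your final formula is right, but only after the correction. Divide by $2\ell+2k+j+1$ before the $(j+1)$-st summation. This gives
\[
K_L(\theta)=c_{k,i}\sum_{\ell}b_\ell^{(i)}\,\frac{\Gamma(\ell+2k+i+1)}{\Gamma(\ell+k+1)}\,P_\ell^{(k+i,k)}(\cos\theta),\qquad b^{(1)}=\Delta a,\quad b^{(j+1)}_\ell=\Delta\Bigl[\frac{b^{(j)}_\ell}{2\ell+2k+j+1}\Bigr].
\]
Here $|b^{(i)}_\ell|\lesssim L^{1-2i}$ and $b^{(i)}$ is supported in $\ell\asymp L$, so each term is $\lesssim L^{k+1-i}|P_\ell^{(k+i,k)}(\cos\theta)|$. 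The bound $|P_\ell^{(k+i,k)}(\cos\theta)|\lesssim\ell^{-1/2}\theta^{-k-i-1/2}$ then gives your estimate on $[1/L,\pi/2]$.

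\textbf{Step 3 is a genuine gap.} Neither argument you propose works. Write $w_\ell:=P_\ell^{(k,k)}(1)/\mathfrak h_\ell^{(k)}$ and $\phi=\pi-\theta$.

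The parity split destroys the very cancellation you need. For a fixed nonnegative bump $\chi_L=\chi\not\equiv0$, the even and odd parts at $\theta=\pi$ are each $\asymp L^{2k+2}$, whereas the lemma asserts $K_L(\pi)=O(L^{2k+2-M})$. Moreover, $\tfrac12(1+(-1)^\ell)\chi_L(\ell/L)$ is not a smooth filter in $\ell/L$, so Steps~1--2 do not apply to the pieces.

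Summing by parts against $(-1)^\ell$ puts the differences on the whole product $a_\ell\,w_\ell\,P_\ell^{(k,k)}(\cos\phi)$. Since $\ell\mapsto P_\ell^{(k,k)}(\cos\phi)$ oscillates with frequency $\phi$, each difference gains a factor of order $\phi$, not $L^{-1}$, once $\phi\gg1/L$. The crude bound then gives nothing.

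\textbf{The missing observation} is that no separate argument is needed. Raising $\alpha$ leaves the behaviour at $t=-1$ governed by $\beta=k$. Indeed, $P_\ell^{(a,b)}(-t)=(-1)^\ell P_\ell^{(b,a)}(t)$ together with Szeg\H{o}, \S7.32, gives $|P_\ell^{(k+i,k)}(\cos\theta)|\le C\ell^{k}$ for $\theta\in[\pi/2,\pi]$. With the corrected representation above, this yields
\[
|K_L(\theta)|\lesssim L\cdot L^{k+1-i}\cdot L^{k}=L^{2k+2-i},\qquad \theta\in[\pi/2,\pi],
\]
which suffices with $i=\lceil M\rceil\le J$. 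So your route can be completed into a self-contained proof of the Dai--Xu estimate that the paper cites. With Step~3 as you propose it, however, the far hemisphere is not covered.
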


\begin{proof}
Let $\varrho=\lceil M\rceil$ and $b=\max\{1,c_4\}$. We set
\[
N_L:=\lceil bL\rceil,
\quad
b_L:=\frac{N_L}{L},
\quad
\varphi_L(u):=\chi_L(b_Lu).
\]
Then, we obtain
\[
\varphi_L\!\left(\frac{\ell}{N_L}\right)
=
\chi_L\!\left(\frac{\ell}{L}\right),
\quad
\operatorname{supp}\varphi_L\subset[0,1].
\]
Because $\varphi_L$ vanishes in a neighborhood of zero, all its
derivatives vanish at zero. Moreover, $b\le b_L\le b+1$, and hence
\[
\sup_L\|\varphi_L^{(i)}\|_{L^\infty}
\le
(b+1)^i\sup_L\|\chi_L^{(i)}\|_{L^\infty}.
\]

The Jacobi normalization gives
\[
\frac{P_\ell^{(k,k)}(1)}{\mathfrak h_\ell^{(k)}}
=
\frac{1}{2^{2k+1}\Gamma(k+1)}
(2\ell+2k+1)
\frac{\Gamma(\ell+2k+1)}{\Gamma(\ell+k+1)}.
\]
Thus, up to a multiplicative constant, the sum
in the statement is the kernel $G_{N_L}^{(k,k)}$ defined in
\cite[(2.6.7)]{Dai_2013book_approximation}, with cutoff $\varphi_L$.
Applying \cite[Theorem~2.6.7]{Dai_2013book_approximation} with
$\alpha=\beta=k$, $n=N_L$, Jacobi derivative order zero, and
decay parameter $\varrho$, yields
\[
|G_{N_L}^{(k,k)}(\cos\theta)|
\le
C_{k,\varrho}\|\varphi_L^{(3\varrho-1)}\|_{L^\infty}
N_L^{2k+2}(1+N_L\theta)^{-\varrho}.
\]
Since $N_L\asymp L$ and $\varrho\ge M$, the right-hand side is bounded by
\[
CL^{2k+2}(1+L\theta)^{-M}.
\]
This completes the proof.
\end{proof}

\begin{proof}[\textbf{Proof of Lemma~\ref{lem:filtered-localization}}]
The vector addition formula gives
\begin{equation}\label{eq:K-Lw-from-H}
\mathbf K_{L,w}(\bx,\by)
=
\calB_{L,\mathsf{w}}''(t)\bfQ(\bx,\by)
+
\calB_{L,\mathsf{w}}'(t)\bfR(\bx,\by),
\quad
t=\bx\cdot\by.
\end{equation}
We then establish uniform bounds for the first two
derivatives of $\calB_{L,\mathsf{w}}$.

For $j=1,2$, the Jacobi differentiation identity is
\[
\frac{d^j}{dt^j}P_\ell(t)
=
2^{-j}(\ell+1)_jP_{\ell-j}^{(j,j)}(t),
\quad (z)_j=\frac{\Gamma(z+j)}{\Gamma(z)},\quad
\ell\ge j,
\]
Put $n=\ell-j$. The standard Jacobi normalization gives
\[
P_n^{(j,j)}(1)
=
\frac{\Gamma(n+j+1)}
{\Gamma(j+1)\Gamma(n+1)}
\]
and
\[
\mathfrak h_n^{(j)}
=
\frac{2^{2j+1}}{2n+2j+1}
\frac{\Gamma(n+j+1)^2}
{\Gamma(n+1)\Gamma(n+2j+1)}.
\]
Consequently,
\[
\frac{P_n^{(j,j)}(1)}{\mathfrak h_n^{(j)}}
=
\frac{2n+2j+1}{2^{2j+1}\Gamma(j+1)}
\frac{\Gamma(n+2j+1)}{\Gamma(n+j+1)}.
\]
Substituting it into the differentiated series yields the following identity
\begin{equation}\label{eq:H-derivative-Jacobi}
\calB_{L,\mathsf{w}}^{(j)}(t)
=
c_jL^{-2}
\sum_{n=0}^\infty
\chi_{L,j}\!\left(\frac nL\right)
\frac{P_n^{(j,j)}(1)}{\mathfrak h_n^{(j)}}
P_n^{(j,j)}(t),
\end{equation}
where
\[
c_j
=
\frac{2^{j-1}\Gamma(j+1)}{\pi},
\quad
\chi_{L,j}(u)
:=
\frac{\mathsf{w}(u+j/L)}
{(u+j/L)(u+(j+1)/L)}.
\]

We next verify that the constants supplied by
Lemma~\ref{lem:uniform-jacobi-localization} are uniform in $L$.
Fix $L_0\ge8(j+1)$. If $L\ge L_0$, then
$\operatorname{supp}\chi_{L,j}\subset[1/8,1]$.
On this support, both denominator factors are bounded below by
$1/4$. Differentiating $\chi_{L,j}$ therefore shows that, for any integer $i\ge0$,
\[
\sup_{L\ge L_0}
\|\chi_{L,j}^{(i)}\|_{L^\infty}
\le
C_{i,j}
\max_{0\le s\le i}
\|\mathsf{w}^{(s)}\|_{L^\infty}.
\]
Thus, the support condition and the required finite collection of smoothness seminorms in
Lemma~\ref{lem:uniform-jacobi-localization} are uniform in $L$.

Applying Lemma~\ref{lem:uniform-jacobi-localization} to
\eqref{eq:H-derivative-Jacobi} with
$k=j, ~M=\nu+2, ~c_3=\frac18, ~c_4=1$ yields
\begin{equation}\label{eq:H-derivative-localization}
|\calB_{L,\mathsf{w}}^{(j)}(\cos\theta)|
\le
C_{\nu,\mathsf{w},j}L^{2j}(1+L\theta)^{-\nu-2},
\quad j=1,2.
\end{equation}

Finally, because
$\|\bfQ(\bx,\by)\|_2=\sin^2\theta
\le\theta^2$, $\|\bfR(\bx,\by)\|_2\le2$,
combining these bounds with \eqref{eq:K-Lw-from-H} and
\eqref{eq:H-derivative-localization} gives
\[
\begin{aligned}
\|\mathbf K_{L,\mathsf{w}}(\bx,\by)\|_2
&\le
C_{\nu,\mathsf{w}}
\left(L^4\theta^2+L^2\right)
(1+L\theta)^{-\nu-2}
\\
&\le
C_{\nu,\mathsf{w}}L^2(1+L\theta)^{-\nu},
\end{aligned}
\]
which completes the proof.
\end{proof}

\bigskip

\section{Proof of Lemma~\ref{lem:packing-estimate}}
\begin{proof}[\textbf{Proof of Lemma~\ref{lem:packing-estimate}}]
By definition of $q_X$, the nodes satisfy
$\dist(\bx_i,\bx_j)\ge 2q_X$ for $i\ne j$; hence, the spherical caps $B(\bx_j,q_X)$ are pairwise disjoint. Fix $i$ and $k\ge 1$. If $kq_X\le \dist(\bx_i,\bx_j)<(k+1)q_X$,
then the cap $B(\bx_j,q_X)$ is contained in the annulus
\[
\left\{
\bz\in \mathbb S^2:\;
(k-1)q_X
\le
\dist(\bx_i,\bz)
<
(k+2)q_X
\right\}.
\]
Comparing the total area of these disjoint caps with that of the annulus yields
\[
\#\big\{
j:\; kq_X\le \dist(\bx_i,\bx_j)<(k+1)q_X
\big\}
\le C(k+1).
\]
This proves the first assertion.

If $Lq_X\ge 1$, the packing estimate gives
\[
\begin{aligned}
\sum_{j\ne i}
\left(1+L\dist(\bx_i,\bx_j)\right)^{-\nu}
&\le
C\sum_{k=1}^{\infty}
(k+1)(1+kLq_X)^{-\nu}  \\
&\le
C(Lq_X)^{-\nu}
\sum_{k=1}^{\infty}
(k+1)k^{-\nu}.
\end{aligned}
\]
The final series converges because $\nu>2$. The estimate is therefore uniform in $i$, which proves the second assertion.
\end{proof}

\bigskip
\section{Surfaces in numerical experiments}
\label{sec:def_surfaces}
We define the torus, red blood cell surface, and bumpy sphere, together with their associated divergence-free target vector fields.

\smallskip
\textbf{1. Torus.}
Let $\Gamma_{\mathrm T}\subset\mathbb{R}^3$ be the torus parameterized by
\begin{equation*}
	\bx_{\mathrm T}(\theta,\lambda)
	=
	\bigl(
	(R+r\cos\lambda)\cos\theta,\,
	(R+r\cos\lambda)\sin\theta,\,
	r\sin\lambda
	\bigr)^\top,
	\quad
	\theta,\lambda\in[0,2\pi),
	\label{eq:torus_param}
\end{equation*}
where $R$ and $r$ denote the major and minor radii, respectively. 
We generate a tangential divergence-free vector field from the stream function
$s_3(\theta,\lambda)=\sin(m\theta-n\lambda)$, where
$m,n\in\mathbb{N}$. 
Here, we set $R=1$, $r=\frac{1}{2}$, $m=2$ and $n=3$.
\smallskip

\textbf{2. Red blood cell.}
The RBC surface $\Gamma_{\mathrm R}\subset\mathbb{R}^3$ is parameterized by
\begin{equation*}
	\begin{aligned}
		\Gamma_{\mathrm R} = \bigl\{ (x, y, z) \in \mathbb{R}^3 \mid
		& x = r_0 \cos\lambda \cos\theta, \;
		y = r_0 \sin\lambda \cos\theta, \\
		& z = \tfrac{1}{2} \sin\theta
		\bigl(c_0 + c_2 \cos^2\theta + c_4 \cos^4\theta\bigr)
		\bigr\},
	\end{aligned}
	\label{eq:rbc_param}
\end{equation*}
where $-\pi/2\leq\theta\leq\pi/2$, $-\pi\leq\lambda<\pi$,
$r_0=3.91/3.39$, $c_0=0.81/3.39$, $c_2=7.83/3.39$, and
$c_4=-4.39/3.39$. This biconcave surface differs substantially from both the sphere and the torus and therefore provides a geometrically more heterogeneous test case.

We define the scalar stream function
\begin{equation*}
	s_4(\theta,\lambda)
	=
	\cos(5\theta)\cos(\lambda)
	\label{eq:rbc_stream}
\end{equation*}
and the target field
$\bff_{\mathrm R}=\bL_{\mathrm R}s_4$,
where $\bL_{\mathrm R}$ denotes the surface curl operator on $\Gamma_{\mathrm R}$. 
\smallskip

\textbf{3. Bumpy sphere.}
The bumpy sphere $\Gamma_{\mathrm B}\subset\mathbb{R}^3$ is defined as a radial perturbation of the unit sphere:
\begin{equation*}
	\mathbf{x}_{\mathrm B}(\theta,\lambda)
	=
	\rho(\theta,\lambda)
	\begin{pmatrix}
		\sin\lambda\cos\theta\\
		\sin\lambda\sin\theta\\
		\cos\lambda
	\end{pmatrix},
	\quad
	\theta\in[0,2\pi),
	\quad
	\lambda\in[0,\pi],
	\label{eq:bumpy_param}
\end{equation*}
where the radial function is given by
\begin{equation*}
	\rho(\theta,\lambda)
	=
	R_{\mathrm{base}}
	+
	a\left[
	\sin(\omega x)
	+
	\sin(\omega y)
	+
	\sin(\omega z)
	\right],
	\label{eq:bumpy_rho}
\end{equation*}
with
$(x,y,z)
=
(\sin\lambda\cos\theta,\,
\sin\lambda\sin\theta,\,
\cos\lambda)$. We take
$R_{\mathrm{base}}=1,a=0.05$ and $\omega=20$.
The stream function is chosen as
\begin{equation*}
	s_5(x,y,z)
	=
	xy+yz+zx,
	\label{eq:bumpy_stream}
\end{equation*}
and the target divergence-free vector field is given by
	$\bff_{\mathrm B}
	=
	\bL_{\mathrm B}
	s_5$.

\end{document}